\documentclass[11pt]{amsart}
\usepackage{amssymb}
\usepackage{hyperref}
\usepackage[a4paper]{geometry}
\usepackage{dsfont}
\usepackage{fullpage}
\usepackage{amsmath}

\theoremstyle{plain}
\newtheorem{thm}{Theorem}[section] 
\newtheorem{prop}[thm]{Proposition} 
\newtheorem{lem}[thm]{Lemma} 
\newtheorem{cor}[thm]{Corollary}
\theoremstyle{remark} 
\newtheorem{rem}{Remark}[section] 
\theoremstyle{definition}
\newtheorem*{defin}{Definition}
\numberwithin{equation}{section}


\newcommand*{\id}{\operatorname{Id}}

\newcommand*{\loc}{\mathrm{loc}}
\newcommand{\ii }{{\rm i} }

\usepackage[textsize=tiny,textwidth=27mm]{todonotes}


\usepackage{graphicx,pifont}




\newcommand*{\bydef   }{\overset{\rm def}{=}}
\newcommand*{\norm}[1]{\left\Vert #1\right\Vert}


\begin{document}
\title[Multi-Layer Quasi-Geostrophic Equations]{Dynamic Behavior of a Multi-Layer Quasi-Geostrophic Model: Weak and Time-Periodic Solutions}

\author[2]{Zineb Hassainia}
\author[2]{Haroune Houamed}
\address{New York University Abu Dhabi \\
Abu Dhabi \\
United Arab Emirates}
\email{zh14@nyu.edu, haroune.houamed@nyu.edu}

\keywords{Quasi-geostrophic equations, weak solutions, Lagrangian solutions, V-states, vortex patches.}
\date{\today}

\begin{abstract}
The quasi-geostrophic two-layer (QS2L) system models the dynamic evolution of two interconnected potential vorticities, each is governed by an active scalar equation. These vorticities are linked through a distinctive combination of their respective stream functions, which can be loosely characterized as a parameterized blend of both Euler and shallow-water stream functions. 

In this article, we study (QS2L) in two directions: First, we prove the existence and uniqueness of global weak solutions in the class of Yudovich, that is when the initial vorticities are only bounded and Lebesgue-integrable. The uniqueness is obtained as a consequence of a stability analysis of the flow-maps associated with the two vorticities. This approach replaces the relative energy method and allows us to surmount the absence of a velocity formulation for (QS2L).  
Second, we show how to construct $m$-fold time-periodic solutions bifurcating from two arbitrary distinct initial discs rotating with the same angular velocity. This is achieved  provided that the number of symmetry $m$ is large enough, or for any symmetry $m\in \mathbb{N}^*$ as long as one of the initial radii of the discs does not belong to some set that contains, at most, a finite number of elements.   
Due to its multi-layer structure, it is essential to emphasize that the bifurcation diagram exhibits a two-dimensional pattern. Upon analysis, it reveals some similarities with the scheme accomplished for the doubly connected V-states of the Euler and shallow-water equations. However, the coupling between the equations gives rise to several difficulties in various stages of the proof when applying Crandall-Rabinowitz's Theorem. To address this challenge, we conduct a careful analysis of the coupling between the kernels associated with the Euler and shallow-water equations. 
\end{abstract}

\maketitle

 \tableofcontents

\section{Introduction and main results} 
In this work, we consider the quasi-geostrophic two-layer model (Phillips' model) given by 
\begin{equation}\label{EQ} \tag{QS2L}
\left\{
\begin{array}{ll}
\partial_t \omega_i + (\nabla ^\perp \psi_i)\cdot \nabla \omega_i=0, &\qquad \qquad(t,x)\in \mathbb{R}^+\times \mathbb{R}^2,
\vspace{2mm}\\
\omega_i = \Delta \psi_i +(-\delta)^{i-1} \lambda^2 (\psi_2 - \psi_1),& \qquad\qquad i\in \{1,2\},
\vspace{2mm}\\
\omega_{i}|_{t=0}= \omega_{i,0}, 
\end{array}
\right.
\end{equation} 
 where $\psi_i$  and  $\omega_i$  stand for the stream function and potential vorticity in the $i^{\text{the}}$ layer, respectively. The parameter  $\delta>0$ above refers to the ratio of the upper to lower layer thickness when the fluid is at rest, whereas   $\lambda \geq 0$ describes the rigidity of the interface. In particular, when $\lambda =0$ (which
corresponds to a perfectly rigid interface), the two layers become uncoupled and
behave as two independent two-dimensional systems obeying the Euler equations.

 The system of equations \eqref{EQ} serves as a simplified model and  a foundational concept  in the study of large-scale atmospheric and oceanic flows. In atmospheric applications, these layers often represent the upper and lower troposphere. On the other hand, in oceanic modeling, they might correspond to the upper mixed layer and the deeper ocean. See for instance \cite{PZF89} for more details.

Though it is still far less studied compared to Euler equations, from analytical point of view,  the quasi-geostrophic two-layer model has enjoyed considerable interest in computational fluid dynamics. We refer to \cite{Carton-2010,Carton-2010-2,Carton-2014,flierl_1988,polvani_1991,PZF89} and the references therein for a series of results on the numerical analysis of \eqref{EQ}.  

In the first part of this paper, we are interested in the construction  of weak solutions to the quasi-geostrophic two-layer  system \eqref{EQ} with initial data of Yudovich-type.   These solutions solve a weak formulation of the equations in the distribution sense, which we introduce next. 

\begin{defin}[Weak--distributional solutions]
 We say that  $(\omega_i)_{i\in \{1,2\}}$ is a weak distributional (or simply weak)  solution of \eqref{EQ} with initial data $\omega_{i,0}\in L^p(\mathbb{R}^2)$, for some $p\in [1,\infty]$,  if 
\begin{itemize}
\item [(i)]  $\omega_1, \omega_2 \in L^\infty_{\text{loc}}(\mathbb{R}^+; L^p(\mathbb{R}^2)),$  \vspace{2mm}
  \item [(ii)]  $\nabla^\perp \psi_1, \nabla^\perp \psi_2 \in L^\infty_{\text{loc}}(\mathbb{R}^+; L^{p'}(\mathbb{R}^2)),$ where $p'$ denotes the conjugate of $p$, \vspace{2mm}
  \item [(iii)]  for  any test function $\varphi \in C^1_c(\mathbb{R}^+ \times \mathbb{R}^2)$, it holds that\vspace{2mm}
  $$ \int_{[0,t]\times\mathbb{R}^2} \omega_i(t,x) \left(\partial_t \varphi + \nabla^\perp \psi_i\cdot \nabla \varphi\right) (t,x) dxdt=\int_{  \mathbb{R}^2}\omega _i(t,x)\varphi (t,x)dx - \int_{  \mathbb{R}^2}\omega _i(0,x)\varphi (0,x)dx, $$
\end{itemize}
for all $i\in \{1,2\}$ and   $t\in \mathbb{R}^+$.
\end{defin}  
Questions of global existence and uniqueness of weak solutions have been originally addressed in the case of Euler equations by Yudovich in \cite{Yudovich1, Yudovich2, vishik1999incompressible},  where this received satisfactory answers  provided that  the initial vorticity is bounded or lies in  $L^p$ spaces, for all $p<\infty$, with adequate assumptions on the growth of its norm as $p\to\infty$. Later on,   non-uniqueness was obtained for the forced Euler equations \cite{DEMC21}, where the vorticity is barely bounded, and in the unforced case \cite{BC21} in a weaker functional setting.

  As it is well known, the solenoidal velocity field associated with a bounded vorticity is not Lipshitz in general. Instead, it enjoys the so-called LogLipshitz regularity which is crucial in the original proof of uniqueness for the Euler equations \cite{Yudovich1, Yudovich2} where the approach therein is laid out by a stability analysis in the relative energy setting. This aproach does not seem to be applicable in the case of quasi-geostrophic two-layer model \eqref{EQ} because the equations of the velocities $\nabla ^\perp \psi_1$ and $\nabla ^\perp \psi_2$ are far from been compared to the velocity equation in Euler's system.  

The alternative way to study stability aspects of models such as quasi-geostrophic two-layer \eqref{EQ} would be by  understanding first the same question about a different quantity. Here, this will be done for  the flow-maps associated with the velocities $\nabla ^\perp \psi_1$ and $\nabla ^\perp \psi_2$. This draws insight from the recent work \cite{CRIPPA1} on nonlinear transport equations advected by a non-local velocity field. This also motivates the consideration  Lagrangian solutions that we recall next.

\begin{defin}[Lagrangian solutions]
We say that $(\omega_i)_{i\in \{1,2\}}$ is a   Lagrangian solution of \eqref{EQ} if   $\omega_i(t,\cdot)$ is the push forward of the initial data $\omega_{i,0}$ by the flow-map associated with the velocity field $\nabla ^\perp \psi _i$, for any $i\in \{1,2\}$. More precisely,   $(\omega_i)_{i\in \{1,2\}}$ is a   Lagrangian solution of \eqref{EQ}  if 
$$\omega_i(t,\cdot) = X_i(t,\cdot)_{\sharp} \omega_{i,0}$$
   and $X_i(t,\cdot) $ solves the ODE
\begin{equation}\label{ODE}\tag{ODE}
  \left\{ \begin{array}{ll}
  \frac{d }{dt}X_i(t,x) = (\nabla ^\perp \psi _i)\left( X_i(t,x) \right),\vspace{2mm}\\
  X_i(0,x) = x,  
\end{array}
\right.
\end{equation}
for all $i\in \{1,2\}$ and $(t,x)\in \mathbb{R}^+\times\mathbb{R}^2.$
\end{defin}

In order to construct global solutions of \eqref{EQ} in Yudovich's class, it is important to understand the coupling in that system of equations. A crucial step in our analysis below is introducing   suitable new unknowns which are made of a specific combination of the solutions $\omega_1$ and $\omega_2$. The new unknowns allow us to recast the equations connecting the vorticities with their stream functions in a way that is naturally compatible with the coupling in \eqref{EQ} and which also reveals the connexion between \eqref{EQ}, Euler and shallow-water equations. This is fundamental in our proof of existence of solutions and is discussed in Section \ref{section:Yudovich}. Afterwards, this observation inspires introducing a similar combination of the flow--maps $X_1$ and $X_2$ which serves in their stability analysis. The uniqueness of weak-distributional solutions in Yudovich's class follows subsequently. This is summarized in our first main theorem which we state next.

 \begin{thm}[Existence and uniqueness of weak   solutions]\label{Thm:1} For any initial data satisfying  
 $$(\omega_{1,0}, \omega_{2,0})\in L^1\cap L^\infty(\mathbb{R}^2), $$  there is a unique global weak solution  $(\omega_1, \omega_2)$ to  \eqref{EQ}  enjoying the bounds  
$$ (\omega_1, \omega_2) \in L^\infty(\mathbb{R}^+, L^1 \cap L^\infty(\mathbb{R}^2)) ,  $$  
as well as the conservation of norms
$$ \norm {\omega_j(t,\cdot)}_{L^q(\mathbb{R}^2)} = \norm {\omega_{j,0}}_{L^q(\mathbb{R}^2)}, $$
 for any   $i\in \{1,2\}$,  $q\in [1,\infty]$ and $t\geq 0$.
   Moreover, the solution  is continuous in time in the sense that 
  \begin{equation*}
  	(\omega_1, \omega_2) \in C(\mathbb{R}^+, L^p(\mathbb{R}^2))   ,  
  \end{equation*}
for all $p\in [1,\infty)$. 
 \end{thm}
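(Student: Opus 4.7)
The main structural insight that I would exploit is a diagonalisation of the elliptic coupling in the second line of \eqref{EQ}. Introduce the new unknowns
$$ \Omega := \delta\, \omega_1 + \omega_2, \qquad \tilde{\omega} := \omega_1 - \omega_2, \qquad \Psi := \delta\, \psi_1 + \psi_2, \qquad \eta := \psi_1 - \psi_2, $$
and set $\mu^2 := (1+\delta)\lambda^2$. A direct computation shows that the two shift terms cancel in the first combination while they double in the second, giving
$$ \Delta \Psi = \Omega, \qquad (\Delta - \mu^2)\eta = \tilde{\omega}. $$
Hence $\Psi$ is recovered from $\Omega$ via the Biot-Savart kernel while $\eta$ is recovered from $\tilde{\omega}$ via the Bessel kernel $-(2\pi)^{-1}K_0(\mu|\cdot|)$. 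Each velocity $\nabla^\perp \psi_i$ is then a fixed linear combination of the Euler velocity generated by $\Omega$ and a strictly more regular contribution built from $\tilde{\omega}$ through the Bessel kernel. Under the assumption $(\omega_{1,0},\omega_{2,0}) \in L^1 \cap L^\infty$, this representation immediately yields a log-Lipschitz estimate for each $\nabla^\perp \psi_i$, with the Euler part carrying the log-Lipschitz modulus and the Bessel part being genuinely Lipschitz.

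For the existence part I would regularise the initial data by mollification into smooth compactly supported $(\omega_{1,0}^n, \omega_{2,0}^n)$ and solve the corresponding smooth problem by a Picard iteration based on the Lagrangian formulation \eqref{ODE}. Because each $\omega_i^n$ is transported by a smooth divergence-free field, every $L^q$ norm is conserved, giving uniform-in-$n$ bounds on the velocities in a log-Lipschitz class. Compactness (Arzela-Ascoli for the flow maps $X_i^n$, together with the representation of $\omega_i^n$ as a pushforward) then allows passage to the limit and the recovery of a weak distributional solution. Time continuity in any $L^p$ with $1\leq p<\infty$ follows from the strong convergence of the flows combined with the uniform $L^1\cap L^\infty$ bound; the $L^q$ norm conservation persists for the limit solution because the limit is itself Lagrangian.

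Uniqueness is the delicate step, and the plan is to carry it out at the Lagrangian level as hinted in the introduction. Given two weak solutions sharing the initial data, both agree with their Lagrangian representatives because each $\nabla^\perp \psi_i$ has enough regularity (log-Lipschitz) for the flow to be uniquely defined. I would then control the distance between the corresponding flow maps $X_i^{(1)}, X_i^{(2)}$ by adapting the Crippa-De Lellis scheme of \cite{CRIPPA1}: introduce the combined Lagrangian quantities suggested by $(\Omega,\tilde\omega)$ and estimate a logarithmic functional of the type
$$ \Phi(t) := \sum_{i=1}^{2} \int_{\mathbb{R}^2} \log\Bigl( 1 + \frac{|X_i^{(1)}(t,x) - X_i^{(2)}(t,x)|}{r}\Bigr)\, d\omega_{i,0}(x), $$
using the log-Lipschitz estimate on the difference of the velocities expressed through Biot-Savart and Bessel convolutions. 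A Gronwall argument on $\Phi$ yields $\Phi \equiv 0$, hence the coincidence of the flows and of the two solutions.

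The principal obstacle, as pointed out in the introduction, is precisely the absence of a closed velocity equation that would permit a direct Yudovich-type relative energy estimate on $\nabla^\perp \psi_i$: one cannot simply reproduce the classical $L^2$ stability proof for Euler. Re-routing everything through the flows shifts the difficulty to controlling the cross-coupling between $X_1$ and $X_2$ uniformly in time. The diagonalisation $(\Omega,\tilde\omega)$ and its Lagrangian analogue are the pivotal device which, together with the fact that the Bessel kernel is smoother than Biot-Savart, should absorb all the coupling terms into controllable error terms in the final Gronwall estimate.
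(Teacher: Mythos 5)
Your overall strategy matches the paper's: diagonalise the elliptic coupling via $\delta\omega_1+\omega_2$ and $\omega_1-\omega_2$ (the paper's $\mathcal{A}_\delta$-transform, giving $\omega_\pm$, is the same up to a harmless rescaling), build solutions by an approximation scheme, and prove uniqueness by a Lagrangian stability estimate on the flow maps closed with an Osgood/Gronwall argument. The Eulerian linearised iteration the paper uses versus your mollification-plus-Picard iteration, and the $\int X\,\eta\,dx$-plus-Osgood functional the paper uses versus your Crippa--De\,Lellis logarithmic functional $\Phi(t)$, are interchangeable technical choices and both would go through.

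However, there is a genuine error in the key regularity claim you lean on. You assert that the contribution from the Bessel kernel is ``strictly more regular'' than the Biot--Savart part, in particular that it is ``genuinely Lipschitz,'' and you use this in the final paragraph as the mechanism that absorbs the coupling terms. This is false: the velocity kernel associated with $(\Delta-\mu^2)^{-1}$ is $k_-(x)=-\tfrac{x^\perp}{|x|}K_1(\mu|x|)$, and since $K_1(r)\sim\tfrac{1}{r}$ as $r\to 0^+$ (indeed $K_0'(r)=-\tfrac1r+O(r\log r)$), the kernel $k_-$ has exactly the same $|x|^{-1}$ singularity at the origin as $k_+(x)=-\tfrac{1}{2\pi}\tfrac{x^\perp}{|x|^2}$. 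The Bessel part is better only at infinity (exponential decay versus $|x|^{-1}$), not near the diagonal, so the associated velocity is only log-Lipschitz, not Lipschitz. Concretely, Lemma~\ref{lemma:kernels} of the paper establishes the identical bounds $|k_\pm(x)|\lesssim|x|^{-1}$ and $|k_\pm(x)-k_\pm(y)|\lesssim \tfrac{|x-y|}{|x||y|}$ for \emph{both} kernels, and Lemma~\ref{kernel:ES} then gives the same modulus $\ell(r)=r\log(e/r)$ for both $K_+$ and $K_-$. The coupling is not controlled because one kernel is smoother; it is controlled because, after the diagonalisation and the Lagrangian push-forward identity $\omega_j(t,\cdot)=X_j(t,\cdot)_\sharp\omega_{j,0}$, every cross term reduces to a kernel increment $k_\pm(\widetilde X_i,X_j)-k_\pm(\widetilde X_i,\widetilde X_j)$ evaluated against a transported $L^1\cap L^\infty$ measure, and the measure-preservation of $\widetilde X_j$ together with the concavity of $\ell$ closes the Osgood estimate. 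Your argument would still work once this is corrected, since nothing requires the Bessel part to be Lipschitz; but as written, the reasoning behind ``controllable error terms'' in the Gronwall step rests on a false premise.
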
 
 
Before we move on to our second main result of this  paper, allow us to establish a remarkable consequence of Theorem \ref{Thm:1}. In particular, the next corollary exhibits the connexion between solutions of the system of equation \eqref{EQ} in the case $\delta=1$ and the Euler equations.
 
   \begin{cor}\label{coro:1}
 The solution $ (\omega_1,\omega_2)$ of the system of equations \eqref{EQ} with  $\delta=1$ and initial data $$\omega_{0,1}= \omega_{0,1}\bydef \omega_0 \in L^1\cap L^\infty(\mathbb{R}^2)$$
is given by 
$$ \omega_1=\omega_2=\omega,$$
where 
$\omega$ is the unique Yudovich solution of the Euler's equation
\begin{equation}\label{Euler}
\left\{
\begin{array}{ll}\tag{E}
\partial_t \omega  + v  \cdot \nabla \omega =0,\\
v  =-  \nabla ^\perp  (-\Delta)^{-1} \omega ,\\
\omega_{|t=0} = \omega_0.
\end{array}
\right.
\end{equation}
 \end{cor}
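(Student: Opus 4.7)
The approach I would take is essentially an "ansatz plus uniqueness" argument, exploiting that when $\delta=1$ the coupling term in the potential-vorticity--stream-function relation is symmetric. The plan is to verify that the pair $(\omega,\omega)$, with $\omega$ the Yudovich solution of \eqref{Euler} starting from $\omega_0$, is a weak solution of \eqref{EQ}, and then to invoke the uniqueness part of Theorem \ref{Thm:1} to conclude that it must coincide with the (unique) weak solution of \eqref{EQ} with initial data $(\omega_0,\omega_0)$.

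First I would introduce the natural candidate. Let $\omega$ be the Yudovich solution of \eqref{Euler} with initial datum $\omega_0\in L^1\cap L^\infty(\mathbb{R}^2)$, and set $\psi \bydef -(-\Delta)^{-1}\omega$ so that $\Delta\psi = \omega$ and $v = \nabla^\perp\psi$. Then I would define
\begin{equation*}
\omega_1 \bydef \omega, \qquad \omega_2 \bydef \omega, \qquad \psi_1 \bydef \psi, \qquad \psi_2 \bydef \psi.
\end{equation*}
Because $\psi_2-\psi_1\equiv 0$, the coupling terms $(-\delta)^{i-1}\lambda^2(\psi_2-\psi_1)$ vanish identically for $i\in\{1,2\}$ (regardless of $\delta$, in fact, but the symmetry of the two algebraic relations in the second line of \eqref{EQ} uses $\delta=1$ to be consistent with the single identification $\omega=\Delta\psi$). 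Consequently both relations $\omega_i=\Delta\psi_i+(-\delta)^{i-1}\lambda^2(\psi_2-\psi_1)$ reduce to $\omega=\Delta\psi$, and the transport equations $\partial_t\omega_i+\nabla^\perp\psi_i\cdot\nabla\omega_i=0$ reduce to $\partial_t\omega+v\cdot\nabla\omega=0$, which is satisfied by construction. The initial condition is $\omega_i(0,\cdot)=\omega_0$, as required.

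Next I would check that this candidate actually sits in the functional class required by the weak-solution definition: the Yudovich theory gives $\omega\in L^\infty(\mathbb{R}^+;L^1\cap L^\infty(\mathbb{R}^2))$, and Calder\'on--Zygmund estimates applied to the Biot--Savart formula yield $v=\nabla^\perp\psi\in L^\infty(\mathbb{R}^+;L^{p'}(\mathbb{R}^2))$ for the relevant conjugate exponents. Plugging the pair $(\omega,\omega)$ into the distributional identity (iii) of the definition then trivially reduces to the weak formulation of \eqref{Euler} tested against $\varphi$, which holds since $\omega$ is a Yudovich solution.

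Finally, having exhibited one weak solution of \eqref{EQ} with the prescribed initial data, the uniqueness statement of Theorem \ref{Thm:1} forces this to be the solution, so $\omega_1=\omega_2=\omega$. There is no real obstacle here beyond bookkeeping; the only point that requires a brief justification is that the algebraic system for the stream functions is genuinely compatible with $\psi_1=\psi_2$, which is precisely the content of the symmetry $\delta=1$, since otherwise $\omega_1=\omega_2$ together with $\psi_2-\psi_1=0$ would overdetermine the two relations.
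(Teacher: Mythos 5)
Your proof is correct, and it takes a genuinely different route from the paper's. The paper exploits a discrete symmetry of the system that holds only when $\delta=1$: if $(\omega_1,\omega_2)$ solves \eqref{EQ} with data $(\omega_{0,1},\omega_{0,2})$, then $(\omega_2,\omega_1)$ solves it with the swapped data $(\omega_{0,2},\omega_{0,1})$ (this swap invariance fails when $\delta\neq 1$ because the two coupling terms carry coefficients $\lambda^2$ and $\delta\lambda^2$). Applying this to identical initial data and invoking uniqueness from Theorem~\ref{Thm:1} yields $\omega_1=\omega_2$ first, then $\psi_1=\psi_2$, then the Euler equation. You instead start from the Euler side: take the Yudovich solution $\omega$, exhibit the explicit ansatz $(\omega,\omega)$ with $\psi_1=\psi_2=-(-\Delta)^{-1}\omega$, verify it is a weak solution of \eqref{EQ}, and conclude by uniqueness. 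Both proofs ultimately rest on the same uniqueness result; yours is arguably more elementary, and — importantly — it is also more general, because the verification of the ansatz goes through for \emph{every} $\delta>0$. Both relations $\omega_i=\Delta\psi_i+(-\delta)^{i-1}\lambda^2(\psi_2-\psi_1)$ reduce to the single identity $\omega=\Delta\psi$ once $\psi_1=\psi_2$, with no constraint on $\delta$, and the stream functions determined by the elliptic system from $\omega_1=\omega_2=\omega$ are automatically equal (subtracting the two relations gives $\bigl(-\Delta+(1+\delta)\lambda^2\bigr)(\psi_2-\psi_1)=0$, hence $\psi_1=\psi_2$, again for any $\delta>0$). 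So your approach in fact strengthens the corollary.

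One thing to fix, though: the two parenthetical/closing remarks you make about where $\delta=1$ is used are incorrect and should be removed. You write that ``the symmetry of the two algebraic relations \dots uses $\delta=1$ to be consistent with the single identification $\omega=\Delta\psi$'' and, at the end, that without $\delta=1$ the conditions $\omega_1=\omega_2$ and $\psi_2-\psi_1=0$ ``would overdetermine the two relations.'' Neither claim is true: substituting $\psi_1=\psi_2=\psi$ and $\omega_1=\omega_2=\omega$ makes both equations identically $\omega=\Delta\psi$, for every $\delta>0$, with no overdetermination. The hypothesis $\delta=1$ plays a role only in the paper's symmetry argument, not in yours; your proof never uses it, which is precisely why your result is stronger. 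You should either delete those remarks or replace them with the observation that $\delta=1$ is in fact unnecessary for this particular corollary.
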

 \begin{proof}
 The proof is straightforward once we notice that the system of equations \eqref{EQ} is symmetric when $\delta=1$ in the sense that, if $(\omega_1,\omega_2) $ is  the solution  associated with  the initial data $(\omega_{0,1},\omega_{0,2}) $, then $(\omega_2,\omega_1) $ is the   solution associated with the initial data $(\omega_{0,2},\omega_{0,1}) $. Accordingly, if we set initially     
 $$\omega_{0,1}= \omega_{0,1},$$
 then  it   immediately   follows, by uniqueness of solutions from Theorem \ref{Thm:1},  that 
 $$(\omega_2,\omega_1) = (\omega_{ 1},\omega_{ 2})  . $$ 
 Consequently, we obtain from the second equation in \eqref{EQ} that 
 $$  \left(- \Delta + \lambda^2 \right) (\psi_1 -\psi_2)=0, $$
 whereby, we deduce that
$$ \psi _1 = \psi_2.$$
Therefore, by inserting the latter identity in \eqref{EQ}, we conclude that $\omega\bydef    \omega_1=\omega_2$ solves the Euler equation \eqref{Euler}, thereby completing the proof of the corollary.
 \end{proof}

Typical examples of solutions covered by Theorem \ref{Thm:1} are the so-called vortex-patches. These are vorticities uniformly distributed in a bounded region $D$ and are generated from given initial data of the format 
 $$ (\omega_{1,0}, \omega_{2,0})= (\mathds{1}_{D_1}, \mathds{1}_{D_2}), $$
  where $D_i$, for $i\in \{1,2\}$, is a bounded domain in $\mathbb{R}^2$ with smooth boundary.
  These initial data fall in the setting of Theorem \ref{Thm:1} which ensures the existence of a unique global solution associated with them. Moreover, the patch structure is preserved by the evolution and, at each time $t\geq 0$, the potential vorticities are  given by 
  \begin{equation*}
  	(\omega_{1}, \omega_{2})= (\mathds{1}_{D_{1,t}}, \mathds{1}_{D_{2,t}}),
  \end{equation*} 
 with, for all $i\in \{1,2\}$, the transported domain 
 \begin{equation*}
 	D_{i,t}\bydef X_i(t, D_{i,t})
 \end{equation*}
  denotes the image of $D_{i}$ by the flow $X_i$ defined as the solution of \eqref{ODE}.  
Note that, when $D_1$ and $D_2$  are discs, then, the potential vorticities  are stationary solutions to \eqref{EQ}.  Thus, it is natural  to look for periodic solutions close to these  

The second aim of this paper is to study the existence of time-periodic solutions of \eqref{EQ} given by rigid body rotating vortex patches around the origin which are described by
$$
(D_{1,t} , D_{2,t}) = ( e^{\ii \Omega t} D_{1} , e^{\ii \Omega t}D_{2}) ,
$$ 
for some time-independent parameter $\Omega$ referring to as the angular velocity.  
The potential vorticities corresponding to the preceding domains are then stationary  along the rotating frame with angular velocity $\Omega$ and the boundaries 
$\partial D_{1,t}$ and $ \partial D_{1,t} $  evolve according to the non-linear, non-local, system 
 \begin{equation}\label{vortex patch equation}
  \left(\nabla ^{\perp} \psi_i (t,x)- \Omega x^\perp \right) \cdot \vec{n}_i(x) = 0,
  \end{equation}
 for all $ x\in  \partial D_{i}$ and $i\in \{1,2\}$, where $\vec{n}_i$ is the outward normal unit vector associated with the initial boundary   $\partial D_i$.  Such solutions are called V-states and have been explored, first, numerically  in the case of Euler's equations  by Deem and Zabusky \cite{DZ78}. The analytical proof of their existence is due to Burbea \cite{B82} and relies on the contour dynamics equation, the conformal mapping parametrization and  the celebrated local bifurcation theorem of Crandall and Rabinowitz \cite{CR71}. The outcome is the existence of a countable family of local curves of V-states with $m$-fold symmetries (i.e. invariant by $\frac{2\pi}{m}$ angular rotation), with $m\geq 2$, bifurcating from the disc at the angular velocities  $\Omega_m=\tfrac{m-1}{2m}$. A global continuation of these local curves were constructed in  \cite{HMW20}, where the global curves limit to a vanishing of the angular fluid velocity. 
 
Other uniformly rotating vortex patch solutions to Euler's equations were constructed,  using Burbea's techniques, close to the annulus in \cite{HdelaHMV}  and close to the Kirchoff ellipses in \cite{MR3462104,hmidi2015bifurcation}.  See also \cite{Guo04} for an instability result of  Kirchoff's ellipses.  

  In the last few years, there have been several investigations on the V-states in different settings, such as the study of the boundary regularity of the V-states \cite{HMV13}, the existence of  multipole vortex patches \cite{G20,G21,HH21,HM17,HW22} and the radial symmetry properties of stationary and uniformly-rotating solutions \cite{GSPSY-rigidity,H15,GSPS:2021}. Very recently  quasi-periodic patch solutions to Euler's equations  have been constructed  using the Nash-Moser scheme and KAM theory, see \cite{MR4623544,HHR23, HR22}.
 
Similar results to the ones mentioned above were also obtained  for other active
scalar equations such as the generalized surface quasi-geostrophic equation, the quasi-geostrophic shallow-water
equations and Euler equations on the rotating unit 2-sphere. We refer to  \cite{GSIP23, HHM21,R22,HR21,GHR23} and the references therein for a series of relevant results.

In the second part of this paper,  we adapt the techniques from \cite{B82, hmidi2015bifurcation} and show how to extend their validity to build time-periodic solutions of \eqref{EQ}. In particular, our second result consists in establishing the existence of $m$-fold symmetric V-states for \eqref{EQ}.

We point out in passing that there have been some numerical simulations suggesting the existence of V-state solutions for \eqref{EQ} in some specific cases, see \cite{polvani_1991, PZF89} for instance. In the present study, we provide an analytical proof of their existence through the application of local bifurcation theory. Informally stated,  our second main result is outlined in the following  theorem. A slightly more detailed and precise version will be discussed  in Section \ref{section:V-states}, later on.

 \begin{thm}[Periodic solutions bifurcating from simple eigenvalues]\label{Thm:2-A}
  Let $m\in\mathbb{N}^*$, $\delta  \geq 1$,  $\lambda>0$ and $0<b_2\leq  b_1$ be a set of real numbers. 
 Then, there exist two curves of $m$-fold symmetric pairs of   simply connected V-states solving \eqref{EQ} and
bifurcating from the stationary states
  \begin{equation}\label{2discs}
  \omega_1= \mathds{1}_{\{x\in \mathbb{R}^2 :  |x| < b_1\}} \quad \text{and} \quad \omega_2= \mathds{1}_{\{x\in \mathbb{R}^2 :  |x| < b_2\}}
  \end{equation}   if  one of the following two conditions is fulfilled:
\begin{itemize}
	\item either $b_1\neq b_2$ and  the number of symmetry $m$ is large enough, i.e., $m\gg 1$,
	\item or $m\in \mathbb{N}^*$, $b_1\in(0,\infty)$ and  $ b_2\not\in S_{m,b_1}$ for some  set  $S_{m,b_1}\subset(0,b_1]$ containing, at most, a finite number of elements. 
	 \end{itemize}
 \end{thm}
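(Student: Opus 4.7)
The plan is to adapt the Burbea/Hmidi--Mateu contour-dynamics-plus-bifurcation scheme to the coupled system \eqref{EQ}, applying the Crandall--Rabinowitz theorem to a nonlinear functional encoding the V-state condition \eqref{vortex patch equation}. First, I would parametrize the boundaries of the two perturbed patches by conformal-type maps of the unit circle, writing
\begin{equation*}
\partial D_i = \{b_i w + b_i f_i(w) : w \in \mathbb{T}\}, \qquad i\in\{1,2\},
\end{equation*}
with $f_i$ lying in a suitable small ball of a Hölder space $X^{k+\alpha}_m$ of $m$-fold symmetric functions (with vanishing low Fourier modes to quotient out translations). Then I would recast the rotation condition \eqref{vortex patch equation} as a nonlinear map
\begin{equation*}
F(\Omega, f_1, f_2) = \bigl(F_1(\Omega, f_1, f_2),\, F_2(\Omega, f_1, f_2)\bigr)
\end{equation*}
from $\mathbb{R}\times X^{k+\alpha}_m\times X^{k+\alpha}_m$ into a suitable target $Y_m\times Y_m$, using the explicit integral representation of $\psi_1,\psi_2$ via the Biot--Savart kernel and the modified Bessel kernel $K_0(\lambda|\cdot|)$ associated with the Helmholtz operator, as dictated by the new unknowns introduced in Section~\ref{section:Yudovich}. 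The trivial branch $F(\Omega,0,0)=0$ holds for all $\Omega$ since the pair of concentric discs is stationary.

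Next, I would compute the linearization $\mathcal{L}(\Omega) := D_{(f_1,f_2)} F(\Omega,0,0)$ acting diagonally on Fourier modes. Writing $f_i(w) = \sum_{n\geq 1} a_{i,n} w^{nm}$ (up to the $m$-fold symmetry), on the $n$-th Fourier block the operator becomes a $2\times 2$ matrix
\begin{equation*}
M_{n}(\Omega) = \begin{pmatrix} \alpha_{n}^{(1)} - \Omega & \beta_{n}^{(12)} \\ \beta_{n}^{(21)} & \alpha_{n}^{(2)} - \Omega \end{pmatrix},
\end{equation*}
where the diagonal entries $\alpha_n^{(i)}$ combine the Euler Burbea eigenvalues $\tfrac{nm-1}{2nm}$ with a shallow-water contribution involving integrals of $K_0(\lambda b_i \cdot)$ and $I_0, K_0$ of $\lambda b_i$, while the off-diagonal entries $\beta_n^{(ij)}$ come from the cross-stream-function coupling through $\lambda^2(\psi_2-\psi_1)$ and involve $I_n(\lambda b_{\min})K_n(\lambda b_{\max})$. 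The candidate bifurcation values are the two roots $\Omega^\pm_m$ of $\det M_1(\Omega)=0$ (with $n=1$ being the mode matching the symmetry $m$), producing the two announced curves.

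The main obstacle is verifying the Crandall--Rabinowitz hypotheses, and in particular the simplicity of the kernel. One must show that $M_1(\Omega^\pm_m)$ has one-dimensional kernel (which amounts to $\beta_1^{(12)}\neq 0$, i.e.~genuine coupling, coming from $\lambda>0$) and, crucially, that no other mode $n\geq 2$ satisfies $\det M_n(\Omega^\pm_m)=0$. This non-resonance is where the two dichotomies of the theorem enter: using the asymptotics $I_n(x)K_n(x)\sim \tfrac{1}{2n}$ as $n\to\infty$ together with the monotonicity of $\tfrac{nm-1}{2nm}$, one shows that for fixed radii with $b_1\ne b_2$ the equation $\det M_n(\Omega^\pm_m)=0$ has no solution for $n\geq 2$ as soon as $m$ is sufficiently large, because the diagonal Euler part dominates the exponentially small Bessel coupling. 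For arbitrary $m\in\mathbb{N}^*$ one instead fixes $b_1$ and notes that, for each $n\geq 2$, the resonance condition defines a real-analytic (in fact algebraic in combinations of $I_n,K_n$) equation in $b_2\in(0,b_1]$ which either holds identically (ruled out by explicit evaluation at $b_2\to 0$ or $b_2=b_1$) or has finitely many roots; moreover only finitely many $n$ can contribute since for $n$ large the Euler part forces $\Omega^\pm_m\approx\tfrac{nm-1}{2nm}$ to be incompatible with the fixed $\Omega^\pm_m$. The excluded finite set $S_{m,b_1}$ is the union of these roots.

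Finally, Fredholmness of index zero of $\mathcal{L}(\Omega^\pm_m)$ follows from standard compactness of the Bessel-type integral operators on Hölder spaces (they gain one derivative), and the transversality condition $\partial_\Omega \mathcal{L}(\Omega^\pm_m)[h_\star]\notin \mathrm{Range}\,\mathcal{L}(\Omega^\pm_m)$ reduces to checking that the partial derivative in $\Omega$ of $\det M_1$ at $\Omega^\pm_m$ is nonzero, equivalently that $\Omega^\pm_m$ are simple roots, which holds away from a degenerate locus already excluded by the non-resonance analysis. Invoking the Crandall--Rabinowitz theorem then yields, for each sign $\pm$, a local $C^1$-curve of nontrivial $m$-fold V-states through $(\Omega^\pm_m, 0, 0)$, giving the two announced branches and completing the proof sketch.
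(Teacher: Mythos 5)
Your proposal is correct and follows essentially the same route as the paper: contour-dynamics reduction of the V-state condition, a Fourier-diagonal $2\times 2$ linearization $M_n(\Omega)$ with a logarithmic (Euler) diagonal plus Bessel ($I_nK_n$) corrections, spectral non-resonance via monotonicity of the eigenvalue sequences $\Omega_n^\pm$ and analyticity in $b_2$, and Crandall--Rabinowitz. The only cosmetic deviations are that the paper parametrizes each boundary in polar form $R_k(\theta)=\sqrt{b_k^2+2r_k(\theta)}$ rather than with conformal maps, and it establishes transversality through a trace computation on the rank-one block $M_m(\Omega_m^\pm)$ (Lemma~\ref{Lemma:ALG}), which is equivalent to your simple-root criterion since $\partial_\Omega\det M_m(\Omega_m^\pm)=\text{Tr}\,M_m(\Omega_m^\pm)=\pm(\Omega_m^+-\Omega_m^-)$.
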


 \begin{rem} 
It is to be emphasize that periodic solutions with $m$-fold symmetry, for any $m\in \mathbb{N}^*$, bifurcating from the two discs  \eqref{2discs},  exist for almost all values of $ b_1,b_2\in (0,\infty)$. However, there are some values of these radii (for instance some values of $b_1=b_2$, as we will justify later on) where the celebrated Crandall-Rabinowitz’s Theorem does not directly apply due to existence of ``spectral collisions''. In such cases, it is not clear yet, at least by the analysis in this work, if time-periodic solutions do exist.
 \end{rem}
 
 \begin{rem}
 Although solutions of \eqref{EQ} are not  stable in general by switching the positions of the vorticities---unless if we consider the case $\delta=1$ as is shown in Corollary \ref{coro:1}---our elements of proof of Theorem \ref{Thm:2-A} remain unchanged  in the case $0< b_1 \leq b_2$ and the same bifurcation result holds true, as well.  In addition, the restriction on $\delta\geq 1$ can in fact be relaxed, see Remark \ref{RMK:bi}, below. 
   \end{rem}
    \subsubsection*{Structure of the paper}
 The remaining sections of this paper are organized as follows:
 The proof of existence and uniqueness of global weak solutions (Theorem \ref{Thm:1}) is the subject of Section  \ref{section:Yudovich}. This is where the crucial coupling in the set of equations \eqref{EQ} will be discussed in details, altogether with the approximate scheme of \eqref{EQ} that we utilize to build global weak and Lagrangian solutions.  
Afterwards,  a stability result of the flow-maps associated with the weak solutions will be established which eventually yields the uniqueness of weak solutions. 

 At last, in Section \ref{section:V-states}, we build  time-periodic solutions by employing     Crandall-Rabinowitz’s Theorem \ref{Crandall-Rabinowitz theorem}. This requires a precise analysis of spectral properties of the linearized operator associated with the boundary equation around steady states (discs) which is discussed with details in the same that section.
 
 The achievement of these results also hinges upon a careful understanding of several properties of   Laplace and shallow-water kernels, as well as a fine analysis of Bessel functions. In Sections \ref{section:black-box}, below, we provide the reader with a tool box of various abstract results on functional analysis and operator theory that will be employed in the proof of our theorems, and which may also serve in other different contexts.

 \section{Functional tools and building blocks}\label{section:black-box}
 Before we step over the elements of proof of our main theorems, we devote this section to a short self-contained introduction of the basis analysis of functional and operational settings as well as properties of the relevant kernels and all notions that will play a role in the construction of weak and time-periodic solutions in the upcoming sections. 
 
 In the sequel, we are going to use classical notations for functional spaces, such as Lebesgue spaces, space for log-Lipschitz functions \dots.  Moreover, for the sake of simplicity, we will often utilize the symbol $\lesssim$ (resp. $\lesssim_\delta$) instead of $\leq C$ (resp. $\leq C_\delta $) when the dependance on the constant $C$ (resp. the constant $C_\delta$) is harmless (resp. degenerates when the parameter  $\delta$ approaches its endpoint values).

\subsection{Bessel functions and asymptotic expansions}\label{section:Bessel}
Here, we recall a few important identities that we will later use to describe the decay properties of specific parameterized integrals. These identities, among more relevant other results, can be found in the books by Abramowitz and Stegun \cite{Abram} and by Watson \cite{W95}.

We first recall that  the Bessel function of the first kind and order $\nu \in \mathbb{N}$  given by the expansion	    
\begin{equation}\label{def modified Bessel function of first kind}
J_{\nu}(z)\bydef \sum_{m=0}^{\infty}\frac{(-1)^m\left(\frac{z}{2}\right)^{\nu+2m}}{m!\Gamma(\nu+m+1)}, \quad \text{for all} \quad |\mbox{arg}(z)|<\pi.
\end{equation}	 
In particular,  when $\nu=n\in\mathbb{N}$, the preceding Bessel function admits the following integral representation, which can be found in \cite[Identity 9.1.21]{Abram},  
\begin{equation}\label{integral representation Jn}
 J_n(z)=\frac{1}{\pi}\int_0^\pi\cos\big(n\theta-z\sin(\theta)\big)d\theta, \quad \text{for all} \quad z\in\mathbb{C}.
\end{equation}    
On the other hand, the Bessel functions of imaginary argument are given by
\begin{equation}\label{In:def}
I_{\nu}(z)\bydef \sum_{m=0}^{\infty}\frac{\left(\frac{z}{2}\right)^{\nu+2m}}{m!\Gamma(\nu+m+1)}, 
\end{equation}
 for all $z\in \mathbb{C}$ with $|\mbox{arg}(z)|<\pi$.
 Thus, for all  $\nu\in\mathbb{C}\backslash\mathbb{Z}$, we define the $K$-Bessel function by
  $$K_{\nu}(z)\bydef \frac{\pi}{2}\frac{I_{-\nu}(z)-I_{\nu}(z)}{\sin(\nu\pi)} $$
and, when $n\in\mathbb{Z},$ we set  
$$K_{n}(z)\bydef \displaystyle\lim_{\nu\rightarrow n}K_{\nu}(z) $$ 
for all $z\in \mathbb{C}$ such that $|\mbox{arg}(z)|<\pi$.  Moreover, we emphasize that 
\begin{equation*}
	I_{-n} \equiv I_n  \qquad \text{and} \qquad K_{-n}\equiv K_n,
\end{equation*}
for all $n\in \mathbb{N}$ and that $ I_n$ and $K_n$ are smooth functions away from the origin and satisfy 
\begin{equation}\label{K:diff}
	I'_0(z) =  I_{1}(z) \qquad \text{and} \qquad K'_0(z) = - K_{1}(z),
\end{equation} 
 for all $z \in  \mathbb{C} $ such that $|\mbox{arg}(z)|<\pi$. The preceding identities can be found in   \cite[pages 357--356]{Abram}.  

The Bessel functions $I_n$ and $K_n$ enjoy several properties and are featured by different equivalent representation formulas. Here, we  recall a useful expansion for $K_n$ that will be utilized in our proof below, which can be found  in \cite[Identity 9.6.11]{Abram}  
\begin{equation*}
	\begin{aligned}
		K_{n}(z) 
		&=\frac{1}{2}\left(\frac{z}{2}\right)^{-n}\sum_{k=0}^{n-1}\frac{(n-k-1)!}{k!}\left(\frac{-z^2}{4}\right)^{k}+(-1)^{n+1}\log\left(\frac{z}{2}\right)I_{n}(z)
		\\
	    & \quad +\frac{1}{2}\left(\frac{-z}{2}\right)^{n}\sum_{k=0}^{\infty} \frac{\Phi(k+1)+\Phi(n+k+1)}{k!(n+k)!} \left(\frac{z }{2}\right)^{2k},   
	\end{aligned}
\end{equation*} 
for any $n\in \mathbb{N}$ with $z \in  \mathbb{C} $ such that $|\mbox{arg}(z)|<\pi$, where  $\psi(1)=-\boldsymbol{\gamma} $ stands for Euler's constant, and we set, for any $m \in \mathbb{N}^*,$ that 
$$ \Phi(m+1)=\displaystyle\sum_{k=1}^{m}\frac{1}{k}-\boldsymbol{\gamma}.$$
In particular, one has the useful identity
\begin{equation}\label{expan K0}
K_{0}(z)=-\log\left(\frac{z}{2}\right)I_{0}(z)+\sum_{m=0}^{\infty}\frac{\left(\frac{z}{2}\right)^{2m}}{(m!)^{2}}\Phi(m+1),
\end{equation}
for any $z \in  \mathbb{C} $ with $|\mbox{arg}(z)|<\pi$.

 The following lemma summarizes several key properties of the Bessel functions. Note that most of the results in the next lemma  are established in previous works, whence, we only outline the idea to justify the ones that we were not able to find  in the literature.  
    \begin{lem}\label{lemma InKn} Let $x,y\in (0,\infty)$ be fixed such that $x\leq y$. Then, the following holds:
    
    \begin{enumerate}
    	\item The sequence $\big((I_n K_n)(x)\big)_{n\geq 1}$  is strictly decreasing and converges to $0$. More generally, the mapping 
    $$(n,x) \mapsto (I_nK_n)(x) $$
     is strictly decreasing on $ \mathbb{N} \times \mathbb{R}^+$.
     \item The sequence $\left(\tfrac{(\frac{x}{y}) ^n}{2n}-I_n(x)K_n(y)\right)_{n\geq 1}$  is positive, decreasing and converges to zero. Moreover, it holds that 
     \begin{equation}\label{asymptotic:Bessel:1}
    	0<\frac{1}{2n}\left(\frac{x}{y}\right) ^n-I_n(x)K_n(y) \leq   \frac{1}{2n} ,
    \end{equation}
    for all  $n\in \mathbb{N}^*$.
    \item The function 
   $$ 
   x\mapsto \frac{ I_1(x)}{x} 
    $$ 
    is strictly increasing on $(0,\infty)$. 
    \end{enumerate} 
    \end{lem}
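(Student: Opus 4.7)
The three statements are of quite different flavors, and the plan is to treat them one by one.

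Claim (3) is handled at once by termwise differentiation of the series \eqref{In:def} of $I_1$: one has
\[
\frac{I_1(x)}{x}\;=\;\sum_{m=0}^\infty\frac{(x/2)^{2m}}{2\,m!\,(m+1)!},
\]
and the derivative of this series is a sum of strictly positive monomials on $(0,\infty)$. For Claim (1), both the strict decrease of $n\mapsto I_n(x)K_n(x)$ at fixed $x>0$ and of $x\mapsto I_n(x)K_n(x)$ at fixed $n$ are classical properties of modified Bessel functions, and the plan is to quote them from the standard literature. The convergence to $0$ will then follow from the well-known large-order asymptotic $I_n(x)K_n(x)\sim \tfrac{1}{2n}$.

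The bulk of the work concerns Claim (2). Set $a_n(x,y):=\tfrac{1}{2n}(x/y)^n-I_n(x)K_n(y)$. First, the upper bound $a_n\leq \tfrac{1}{2n}$ is immediate from $I_n(x),K_n(y)\geq 0$ and $(x/y)^n\leq 1$. For \emph{positivity}, I would introduce $g_n(x,y):=(y/x)^n I_n(x)K_n(y)$ and use $K_n'(y)=\tfrac{n}{y}K_n(y)-K_{n+1}(y)$ together with the three-term recursion $yK_{n+1}(y)=2nK_n(y)+yK_{n-1}(y)$ to compute
\[
\partial_y g_n(x,y)\;=\;\frac{I_n(x)\,y^{n-1}}{x^n}\bigl[2nK_n(y)-yK_{n+1}(y)\bigr]\;=\;-\frac{I_n(x)\,y^{n}\,K_{n-1}(y)}{x^n}\;<\;0.
\]
Hence $g_n(x,\cdot)$ is strictly decreasing, so $g_n(x,y)\leq g_n(x,x)=I_n(x)K_n(x)$ for $y\geq x$; combined with Claim (1) and the elementary limit $\lim_{x\to 0^+}I_n(x)K_n(x)=\tfrac{1}{2n}$ read off from the series definitions, this yields $g_n(x,y)<\tfrac{1}{2n}$, equivalently $a_n>0$. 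The convergence $a_n\to 0$ will come from Debye-type asymptotics of $I_n$ and $K_n$ at large order.

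The \emph{main obstacle} is the monotonicity of $a_n$ in $n$. My plan is to leverage the single-integral representation
\[
a_n\;=\;\int_0^\infty\frac{J_n(sx)J_n(sy)}{s(s^2+1)}\,ds,
\qquad 0<x\leq y,\quad n\geq 1,
\]
obtained by combining the Weber--Schafheitlin formula $\int_0^\infty s^{-1}J_n(sx)J_n(sy)\,ds=\tfrac{(x/y)^n}{2n}$ with the Hankel identity $I_n(x)K_n(y)=\int_0^\infty \tfrac{s}{s^2+1}J_n(sx)J_n(sy)\,ds$, both classical for this range of parameters. From this I would try to read off the sign of $a_n-a_{n+1}$ by using Graf's addition theorem to express $J_n(sx)J_n(sy)$ as the $n$-th cosine Fourier coefficient of $J_0(sR(\phi))$, where $R(\phi):=\sqrt{x^2+y^2-2xy\cos\phi}$, and reduce the matter to a positivity property of a concrete Fourier-sine integral. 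If this comparison is not conclusive, the fallback is to combine asymptotic monotonicity inferred from Debye expansions (which give $a_n\sim C(x,y)/n^2$ with a definite sign of the leading term) with a direct verification at small $n$ via the same integral representation.
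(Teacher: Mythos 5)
Your handling of Claims (1) and (3) is correct and matches the paper (termwise positivity of the series for (3), and citing the standard monotonicity results for (1) — the paper points to Baricz and Segura). Your positivity argument for Claim (2) via the auxiliary quantity $g_n(x,y)=(y/x)^n I_n(x)K_n(y)$ is a nice, valid alternative to what the paper does: the identity $\partial_y g_n=-x^{-n}I_n(x)\,y^n K_{n-1}(y)<0$ is correct, and combined with Claim (1) together with the elementary limit $I_n(s)K_n(s)\to\tfrac{1}{2n}$ as $s\to 0^+$ it does give $a_n>0$. The bound $a_n\le\tfrac{1}{2n}$ and the convergence $a_n\to 0$ then follow immediately, exactly as in the paper, and your invocation of Debye asymptotics for the convergence is an unnecessary detour.

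The genuine gap is the \emph{monotonicity in $n$} of $a_n=\tfrac{1}{2n}(x/y)^n-I_n(x)K_n(y)$, which you yourself flag as the main obstacle but do not resolve. The Weber--Schafheitlin plus Hankel route produces $a_n=\int_0^\infty\frac{J_n(sx)J_n(sy)}{s(s^2+1)}\,ds$, but the integrand changes sign and the naive attempt to interchange integration with the Graf expansion fails: the intermediate object $\int_0^\infty\frac{J_0(sR(\phi))}{s(s^2+1)}\,ds$ diverges logarithmically at $s=0$, so the Fubini step you would need is not available without a nontrivial regularization, and you do not explain how to conclude. The announced ``fallback'' — Debye asymptotics to handle large $n$ plus a finite check at small $n$ — is also not a proof as stated, because no explicit threshold is produced. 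By contrast, the paper's argument is essentially one line once the right integral representation is in hand: using the Lebedev formula
\begin{equation*}
I_n(x)K_n(y)=\tfrac12\int_{\log(y/x)}^\infty J_0\!\left(\sqrt{2xy\cosh t-x^2-y^2}\right)e^{-nt}\,dt,
\end{equation*}
one gets $a_n=\tfrac12\int_{\log(y/x)}^\infty\bigl[1-J_0(\cdot)\bigr]e^{-nt}\,dt$ with a nonnegative bracket (since $J_0$ has a cosine integral representation), and then positivity \emph{and} monotonicity in $n$ both drop out at once from the factor $e^{-nt}$ with $t\ge 0$. You would do well to switch to that representation: it simultaneously proves positivity (making your $g_n$ device redundant) and the decrease in $n$, which your current plan does not establish.
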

\begin{proof}
The proof of the first claim of the lemma is the subject of \cite{Baricz} and  \cite{Segura}. Now,
 in order to justify the second claim, we begin with writing that
\begin{equation*}
	I_n(x)K_n(y)=\frac{1}{2}\int_{\log \frac{y}{x}}^\infty J_0\left(\mu\sqrt{2 xy\cosh(t)-x^2-y^2)}\right) e^{-nt}dt,
\end{equation*}
where $J_0$ is given by \eqref{def modified Bessel function of first kind}.
The latter identity can be found in \cite[page 140]{Lebedev}. 
Therefore, one deduces, for any $n\in \mathbb{N}^*$, that   
\begin{equation}\label{integ rep InKn}
	\frac{(\frac{x}{y}) ^n}{2n}-I_n(x)K_n(y)=\frac{1}{2}\int_{\log \frac{y}{x}}^\infty\Big(1-J_0\left( \sqrt{2xy\cosh(t)-x^2-y^2)}\right)\Big)e^{-nt}dt.
\end{equation}
 On the other hand, using the integral representation \eqref{integral representation Jn}, we observe that
\begin{align*}
1-J_0\left( \sqrt{2xy\cosh(t)-x^2-y^2)}\right)&=\frac{1}{\pi}\int_0^\pi\Big[1-\cos\left( \sqrt{2xy\cosh(t)-x^2-y^2)}\sin\theta\right)\Big]d\theta\geq 0.
\end{align*}
Therefore, this implies that the sequence $\left(\tfrac{(\frac{x}{y}) ^n}{2n}-I_n(x)K_n(y)\right)_{n\geq 1}$  is positive and  decreasing.

Next,  the asymptotic decay  \eqref{asymptotic:Bessel:1},  directly follows from the fact that 
\begin{equation*}
	I_n(x)K_n(y)>0, 
\end{equation*} 
 for all  $n\in \mathbb{N}$,  and the assumption that $x\leq y$. 
 We now turn  to justify that the function $x\mapsto \frac{ I_1(x)}{x}$ is strictly increasing on $(0,\infty)$. To that end, we simply need to notice, in view of the expansion formula \eqref{In:def}, that 
 \begin{equation*}
 	\frac{I_1(x)}{x} = \frac{1}{2}\sum_{ m=0}^\infty  \frac{\left(\frac{x}{2}\right)^{ 2m}}{m! (m+2)!},\quad \text{for all} \quad  x \in (0,\infty). 
 \end{equation*}
 The function on the right-hand side above is clearly increasing on $(0,\infty)$, which concludes the justification of the last claim of the lemma and completes its proof.
\end{proof}

 \subsection{Kernel estimates}

In this paragraph, we intend to show the continuity and   boundedness  of convolution-type operators that   naturally   appear through a specific combination between   velocities  associated with  \eqref{EQ}. But before that, let's introduce the kernels of our interest by first recalling that distributional solutions of 
\begin{equation*}
	-\Delta \textbf{G} = \delta_0 , \quad \text{in} \quad \mathcal{S}'(\mathbb{R}^2)
\end{equation*}
and, for any $\varepsilon>0$, 
\begin{equation*}
	(-\Delta + \varepsilon^2) \textbf{G} _\varepsilon = \delta_0 , \quad \text{in} \quad \mathcal{S}'(\mathbb{R}^2)
\end{equation*}
are respectively given by 
\begin{equation*}
	{\bf{G}}(x)\bydef  -\frac{1}{2\pi} \log|x| \qquad \text{and} \qquad {\bf{G}}_{\varepsilon}(x)\bydef   \frac{1}{2\pi} K_0(\varepsilon |x|),
\end{equation*}
where $K_0$ is the modified Bessel function of zero order introduced in the previous section. Indeed, the first fundamental solution is a classical fact, see for instance the book by Evans \cite[Section 2.2]{E98}. 
As for the justification for the fundamental solution of the second problem above, we refer to \cite[Section 4.2]{DTC19}. We now consider the system of equations 
\begin{equation*}
	\begin{aligned}
		\omega_1 & = \Delta \psi_1 + \lambda^2 (\psi_2 - \psi_1),
		\\
\omega_2 &= \Delta \psi_2 + \delta\lambda^2 (\psi_1- \psi_2),
	\end{aligned}
\end{equation*}
for given real parameters $\delta, \lambda \in (0,\infty)$ and two functions $\omega_1$ and $\omega_2$ with appropriate decay at infinity\footnote{In the case of the present paper, $\omega_1$ and $\omega_2$ are assumed to belong to $L^1\cap L^\infty(\mathbb{R}^2)$, which is enough to give a sense to all the computations in this section.}. Therefore, it is readily seen that 
\begin{equation*}
	\delta \omega_1 + \omega_2 = \Delta ( \delta \psi_1 + \psi_2)
\end{equation*}
and that 
\begin{equation*}
	  \omega_1 - \omega_2 = \Big( -\Delta    + (\delta+1)\lambda^2 \Big)(\psi_2-\psi_1).
\end{equation*}
Hence, by employing the fundamental representation of Laplace and shalow--water operators, we arrive at the identities
\begin{equation*}
	\delta \psi_1 + \psi_2 =  -\int_{\mathbb{R}^2}{\bf{G}}(\cdot-\xi)\big( \delta {\omega}_1 + \omega_2 \big) (\xi)d\xi
\end{equation*}
and
\begin{equation*}
	 \psi_2-\psi_1 =  \int_{\mathbb{R}^2}{\bf{G}}_{\mu}(\cdot-\xi)\big(   {\omega}_1 -  {\omega}_2 \big)(\xi)d\xi,
\end{equation*}
where we denote 
\begin{equation*}
	\mu \bydef  \lambda\sqrt{1+\delta}  .
\end{equation*}
It is then readily seen that the previous two identities lead to the following representation 
\begin{align}\label{def streamL1}
	\psi_k(z)&=\sum_{j=1}^2\int_{\mathbb{R}^2}G_{k,j}(z-\xi){\omega}_j(t,\xi)dA(\xi),
	\end{align}
	 where we set 
	\begin{align}\label{def Gkj}
	G_{k,j}(x)&\bydef   \frac{\delta^{2-j}}{2\pi(\delta+1)}\log |x|+(-1)^{k+j-1} \frac{\delta^{k-1}}{2\pi(\delta+1)} K_0(\mu |x|), 
		\end{align} 
	for any $x\in \mathbb{R}^2 \setminus \{0\}$ and $k\in \{1,2\}$. This representation will come in handy in the construction of time-periodic solutions, later on. 
	
	The short introduction above motivates   the study of the kernels ${\bf{G}}$ and ${\bf{G}_{\varepsilon}}$. For a later use in the proof of existence of weak solutions, we will also need to prescribe some fine properties of the kernels associated with  the operators  
	$$
	\nabla ^\perp (-\Delta) ^{-1} \qquad \text{and} \qquad  \nabla ^\perp \big(\Delta - \mu^2\big)^{-1},$$
which we respectively denote, from now on, by $k_+$ and $k_- $ (with $\mu=1$ for simplicity). More precisely, we now set, for all $x\in \mathbb{R}^2\setminus \{0\}$, that 
\begin{equation}\label{k:pm:def}
	k_+(x )\bydef  - \frac{1}{2\pi }  \frac{x^\perp}{|x|^2} \qquad \text{and} \qquad 	k_-(x) = -  \frac{x^\perp}{|x|} K_1(|x|) ,
\end{equation} 
   where,    $K_1= K_0'$ denotes the Bessel function of order one, previously introduced in Section \ref{section:Bessel}. Accordingly, we define the convolution operators
$$ K_{\pm}f \bydef    k_{\pm}\star f,$$ 
for any suitable function $f$ for which the preceding convolutions have a sense.

The next lemma establishes quantitative estimates on the behavior of the kernels introduced above. This will serve later on to deduce essential properties of operator defined through a convolution with the preceding kernels which will be useful in the proof of uniqueness of solutions to \eqref{EQ} in Yudovich's class.

\begin{lem}\label{lemma:kernels}
 Let $ k_\pm$ be given by \eqref{k:pm:def}. Then, it holds  that  
\begin{equation}\label{A1}
|k_\pm(x)|\leq \frac{C_1}{|x|}, 
\end{equation}
and 
\begin{equation}\label{A2}
|k_\pm(x) - k_\pm(y)|\leq  C_2\frac{|x-y|}{|x| |y|}.
\end{equation}
for   all $ x,y\in \mathbb{R}^2 \setminus  \{0\}$, for  some universal constants $C_1,C_2>0$.
\end{lem}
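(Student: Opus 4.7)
My plan is to reduce both inequalities to the single pair of pointwise estimates
\begin{equation*}
|k_\pm(x)|\leq \frac{C}{|x|}\quad\text{and}\quad |\nabla k_\pm(x)|\leq \frac{C}{|x|^2},\qquad x\in\mathbb{R}^2\setminus\{0\},
\end{equation*}
and then to combine them by a case analysis that avoids integrating $\nabla k_\pm$ across the origin.

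First, the bound \eqref{A1} for $k_+$ is immediate from its explicit form. For $k_-$ it amounts to showing that $rK_1(r)$ is bounded on $(0,\infty)$. Near the origin this follows from the expansion \eqref{expan K0} and the identity $K_0'=-K_1$ recalled in \eqref{K:diff}, which yield $K_1(r)=\tfrac{1}{r}+O(r\log r)$; for $r$ bounded away from $0$ the modified Bessel function $K_1$ is continuous with exponential decay at infinity (a standard asymptotic, also listed in Abramowitz--Stegun). Combining the two regimes gives $K_1(r)\leq C/r$ on $(0,\infty)$, whence $|k_-(x)|\leq C/|x|$.

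Next I will prove the gradient bound. For $k_+$ it is a direct computation of $\partial_j(x_i^\perp/|x|^2)$. For $k_-$, writing $k_-(x)=-\tfrac{x^\perp}{|x|}K_1(|x|)$ and differentiating, the coefficients that appear are $K_1(r)/r$, $K_1(r)/r^2$, and $K_1'(r)/r$. Using the recurrence $K_1'(r)=-\tfrac12(K_0(r)+K_2(r))$ together with the near-zero behavior $K_0(r)\sim -\log r$, $K_1(r)\sim 1/r$, $K_2(r)\sim 2/r^2$, and the exponential decay of all $K_n$ at infinity, one checks that each coefficient is $O(1/r^2)$ uniformly on $(0,\infty)$. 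This delivers $|\nabla k_-(x)|\leq C/|x|^2$.

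Finally, to deduce \eqref{A2}, fix $x,y\in\mathbb{R}^2\setminus\{0\}$ and set $r=\min(|x|,|y|)$. If $|x-y|\geq r/2$, I apply \eqref{A1} to both terms in $k_\pm(x)-k_\pm(y)$, noting that
\begin{equation*}
\max(|x|,|y|)\leq r+|x-y|\leq 3|x-y|,
\end{equation*}
so $\tfrac{1}{|x|}+\tfrac{1}{|y|}\leq \tfrac{2}{r}\leq \tfrac{6|x-y|}{|x||y|}$. If instead $|x-y|<r/2$, then the entire segment $[x,y]$ stays in the region $\{|z|\geq r/2\}$, and in particular $|x|$ and $|y|$ are comparable (within a factor of $3$). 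By the mean value theorem together with $|\nabla k_\pm(z)|\leq C/|z|^2\leq 4C/r^2$ on this segment,
\begin{equation*}
|k_\pm(x)-k_\pm(y)|\leq \frac{4C|x-y|}{r^2}\leq \frac{C'|x-y|}{|x||y|},
\end{equation*}
where the last step uses the comparability of $|x|,|y|$ with $r$. The two cases together yield \eqref{A2}. The only subtle point is handling the singularity at the origin when deriving \eqref{A2}; the case split above is exactly what isolates that difficulty and allows mean value to be used safely in the remaining regime.
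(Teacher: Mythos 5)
Your argument for \eqref{A1} tracks the paper's essentially word for word (split near zero, where $K_1(r)=r^{-1}+O(r\log r)$, from the far field, where $K_1$ decays exponentially). For \eqref{A2}, however, you take a genuinely different route. The paper writes $k_-(x)=cx^\perp R_0(|x|)$ via the integral representation $K_1(r)=cr\int_1^\infty e^{-rt}(t^2-1)^{1/2}\,dt$, controls $R_0$ and $R_0(|x|)-R_0(|y|)$ by hand (Taylor's formula, integration by parts, the exponential decay of the family $R_n$), and splits according to whether $|x|\le 2|y|$ or $|x|\ge 2|y|$. You instead prove the stronger auxiliary bound $|\nabla k_\pm(x)|\lesssim|x|^{-2}$ and then run the standard dichotomy ``$|x-y|\gtrsim\min(|x|,|y|)$: use \eqref{A1} twice; $|x-y|\ll\min(|x|,|y|)$: the segment avoids the origin, so apply the mean value theorem.'' This is cleaner and more modular, and it makes transparent why the constant is universal; it also yields \eqref{A2} as a purely geometric corollary of \eqref{A1} plus the gradient bound, which is reusable. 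What the paper's route buys is self-containedness: it never needs to compute $\nabla k_-$, only the scalar $R_0$ and its difference quotients, which sits nicely alongside the recursion $R_n\lesssim e^{-r}$ it has already set up.

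One step in your gradient bound needs to be phrased more carefully. Writing $k_-(x)=-x^\perp h(|x|)$ with $h(r)=K_1(r)/r$, one finds
\begin{equation*}
\partial_j (k_-)_i \;=\; -\bigl(\partial_j x_i^\perp\bigr)\,h(|x|)\;-\;\frac{x_i^\perp x_j}{|x|}\,h'(|x|),
\qquad h'(r)=\frac{K_1'(r)}{r}-\frac{K_1(r)}{r^2}.
\end{equation*}
The scalar functions $K_1(r)/r^2$ and $K_1'(r)/r$ you list are not $O(r^{-2})$ near the origin; each is of order $r^{-3}$ (since $K_1(r)\sim r^{-1}$ and $K_1'(r)\sim -r^{-2}$). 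What saves the estimate is that $h'(|x|)$ always appears multiplied by the tensor $x_i^\perp x_j/|x|$, which is $O(|x|)$, so the product is indeed $O(|x|^{-2})$; equivalently, the right scalar quantities to isolate are $K_1(r)/r$ and $K_1'(r)$, both $O(r^{-2})$ near zero and exponentially small at infinity. With that correction the gradient bound holds and your case analysis goes through unchanged.
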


\begin{proof}
 Because the bounds on  $k_+$ above are classical, we will only focus on the proof of these bounds  on $k_-$ which we now split into two cases:

\subsubsection*{Case $|x|\leq 1$}
Here, we employ the observation that  $k_- (x)$ can be seen as a perturbation of $ |x|^{-1},$ for finite values of $x\neq 0$. More precisely, we write, in view of \eqref{K:diff} and \eqref{expan K0}, for any $r>0$, that   
\begin{equation*} 
   -K_1 (r) = K_0'(r) =  - \frac{1}{r} +  \frac{r}{2}  \underbrace{ \sum_{m=1}^\infty  m\frac{\left(\frac{r}{2}\right)^{2(m-1)}}{(m  !)^2} \left( \sum_{i=1}^{m-1} \frac{1}{i} - \gamma -    \log \left(\frac{r}{2}\right)   \right) }_{\bydef    H(r)}  ,
\end{equation*}
where, $\gamma$ denotes Euler's constant.  
Therefore, noticing, for any $r\in (0,1]$, that   
$$|r  K_0'(r) | \lesssim 1 , $$
we deduce that \eqref{A1} holds for all $x\in \mathbb{R}^2\setminus \{0\}$ with $|x|\leq 1$.

\subsubsection*{Case $|x|>1$} Now, we     utilize the integral representation of $K_1$ (see \cite[Identity 9.6.23]{AS64}) to write, for any $r>0$, that
\begin{equation}\label{K:integral}
 K_1(r) = \frac{\pi^\frac{1}{2}}{2 \Gamma\left(\frac{3}{2}\right)} r \int_1^\infty e^{-rt} (t^2-1)^\frac{1}{2} dt.
\end{equation}
In view of that, we claim that $K_1$ has an exponential decay as $r\to \infty$. More generally, for a later use, we will now show that the sequence of functions
\begin{equation}\label{Rn:def}
r\mapsto  R_n(r) \bydef     \int_1^\infty  t^n e^{-rt} (t^2-1)^\frac{1}{2} dt ,  
\end{equation} 
has an exponential decay at infinity, for any $n\in \mathbb{N}$. That is, we now claim that 
\begin{equation}\label{decay:Rn}
| R_n(r)| \lesssim_n e^{-r}  ,
\end{equation}   
 for any $r\geq 1$ .
The justification of the preceding bound is achieved by recurrence and simple integration by parts. To see that, we first write, for any $n\in \mathbb{N}$ and all $r\geq 1$, that 
$$ \begin{aligned}
R_n(r)  & = \int_1^2  t^n e^{-rt} (t^2-1)^\frac{1}{2} dt + \int_2^\infty  t^n e^{-rt} (t^2-1)^\frac{1}{2} dt \\
& \leq 2 ^{n+1}   \int_1^ 2  e^{-rt}  dt + \int_2^\infty  t^n e^{-rt} (t^2-1)^\frac{1}{2} dt \\
& \leq 2 ^{n+1}     e^{- r}   + \int_2^\infty  t^n e^{-rt} (t^2-1)^\frac{1}{2} dt  .
\end{aligned}$$
Thus, by   integration by parts  and   using the elementary inequality 
$$ t \leq \sqrt{2} \sqrt{ t^2 -1},$$
which is valid  for all $t \geq 2$,  we find that 
$$ \begin{aligned}
R_n(r)   & \leq  2 ^{n+2}     e^{- r}   + \frac{n+2}{r}   \int_2^\infty  t^{n-1} e^{-rt} (t^2-1)^\frac{1}{2} dt \\
& \leq 2 ^{n+2}     e^{- r}   + (n+2) R_{n-1}(r)  .
\end{aligned}$$
Therefore, it is readily seen   that \eqref{decay:Rn} follows by induction as soon as we show, for all $r\geq 1$, that 
$$  \int_2^\infty  e^{-rt} (t^2-1)^\frac{1}{2} dt \lesssim e^{-r} .$$
To that end, we perform one more integration by parts to obtain, for all $r\geq 1$, that 
$$  \begin{aligned}
 \int_2^\infty   e^{-rt} (t^2-1)^\frac{1}{2} dt &= \frac{\sqrt{3}}{r}  e^{-r} + \frac{1}{r}  \int_2^\infty   e^{-rt} (t^2-1)^{-\frac{1}{2}} dt\\
 & \leq  \frac{\sqrt{3}}{r}  e^{-r} + \frac{1}{\sqrt{3} r}  \int_2^\infty   e^{-rt}  dt,
\end{aligned}$$
which clearly leads to the desired bound.  
 At last, we deduce that 
$$|k_-(x)|\lesssim \frac{1}{|x|} , $$
for any $x\in \mathbb{R}^2$ with $ |x|\geq  1$, thereby concluding the proof of \eqref{A1}.

Now, in order for us to establish \eqref{A2}, we proceed again in two steps. Note that w we can assume that $|y|\leq |x|$ without any loss of generality.

\subsubsection*{Case $ |y| \leq|x| \leq  2|y|$}
 By employing the integral representation \eqref{K:integral}, we obtain that 
$$ \left| k_-(x) - k_-(y) \right| \lesssim  |x-y| R_0(|x|) + |y| \big| R_0(|x|) - R_0(|y|)\big| ,$$
where $(R_n)_{n\in \mathbb{N}}$  is defined in \eqref{Rn:def}. 
 On the one hand, due to the decay estimate \eqref{decay:Rn} and the assumption    $|x|\geq  |y|$, it follows that 
$$ |x-y| R_0(|x|) \lesssim \frac{|x-y|}{|x||y|} ,$$
as soon as $|x|\geq |y|>0$. 
On the other hand,   writing, in view of Taylor's expansion, that
$$|R_0(|x|) - R_0(|y|) | \lesssim   \big|| x|- |y|\big |   \int_0^1 \int_{1}^{\infty} t e^{-t\big(s|x| + (1-s)|y|\big)}  \sqrt{t^2-1} dt ds $$
and  exploiting the assumption $ |x|\geq  |y|$, again, we obtain  that
$$\begin{aligned}
|R_0(|x|) - R_0(|y|) | &\lesssim   \big|| x|- |y|\big |   \int_{1}^{\infty} t e^{-t |y| }  \sqrt{t^2-1} dt \\
  & =   \big|| x|- |y|\big | R_1 (|y|).
\end{aligned} $$
Therefore, by further employing the decay estimate \eqref{decay:Rn} and the assumption $ |x|\leq 2 |y|$, we arrive at the bound 
$$\begin{aligned}
|y||R_0(|x|) - R_0(|y|) | &\lesssim \frac{| x - y|  }{|y|^2} \\
&\lesssim \frac{| x - y|  }{|x||y|}, 
\end{aligned} $$
thereby showing the validity of \eqref{A2} whenever $x,y\in \mathbb{R}^2$ with $ |y|\leq |x| \leq 2|y|$. 
\subsubsection*{Case $ |y| <   2|y| \leq|x| $}
We first proceed in a similar way to   the previous case by writing
$$ \left| k_-(x) - k_-(y) \right| \lesssim  |x-y| R_0(|x|) + |y|  \Big(  R_0(|x|) +  R_0(|y|)\Big) .$$
Hence, by virtue of the decay estimate \eqref{decay:Rn}, we obtain, as long as $ |y|\leq |x|$, that 
$$\begin{aligned}
 \left| k_-(x) - k_-(y) \right|  &\lesssim   \frac{|x-y| }{|x|^2} +    \frac{|y|  }{|x|^2} + \frac{1}{|y| } \\ 
 &\lesssim   \frac{|x-y| }{|x| |y|} +     \frac{1}{|y| }.
\end{aligned}$$
At last, further employing the assumption  $   2|y| \leq|x| $ entails that
$$  \begin{aligned}
 \frac{1}{|y| }  &\leq 2 \left( \frac{1}{|y|} - \frac{1}{|x|}\right) \\
 &\leq   2  \frac{ |x-y|}{|x||y|} ,
\end{aligned} $$
thereby yielding the desired control \eqref{A2} in the case where $   2|y| \leq|x| $, as well. 
 All in all, combining the foregoing estimates leads to a complete justification of \eqref{A2} and concludes the proof of the lemma.
 \end{proof}

 As a consequence of the preceding lemma, we now state a lemma which will be  essential in the proof of the uniqueness   in Theorem \ref{Thm:1}. The proof of the next lemma can be justified by a direct application of  \cite[Corollary 2.4]{CRIPPA1} and \cite[Remark 2.5]{CRIPPA1} that only require suitable quantitative bounds on the kernels $k_{\pm}$, which we already established in Lemma \ref{lemma:kernels}, above.
 
Here and in what follows, the  real function $\ell$ is defined as follows 
$$\ell: [0,\infty) \rightarrow [0,1]$$
where we set
\begin{equation}\label{ell:def}
 \ell(r) \bydef    \left\{ \begin{array}{ll}
 0, &  \text {for  }  r=0,\\
r \log \left( \frac{e}{r}\right),  & \text {if } r\in (0,1]\\
1, & \text{elsewhere}.
\end{array}
\right.
\end{equation} 

\begin{lem}\label{kernel:ES} For any $f\in L^1\cap L^\infty 
(\mathbb{R}^2),$ it holds that 
$$\norm{K_{\pm}f}_{L^\infty}\lesssim \norm f_{L^1\cap L^\infty},$$
where $K_\pm $ are the convolution operator associated with the kernels $k_\pm$ defined in \eqref{k:pm:def}.
 Moreover,  we have that 
$$ \int_{\mathbb{R}^2} \left| k_{\pm} (x-z)-   k_{\pm} (y-z)\right| |f(z)| dz \lesssim   \norm f_{L^1\cap L^\infty}   \ell(|x-y|),$$ 
for any $(x,y)\in \mathbb{R}^2\times \mathbb{R}^2$ with $x\neq y$.
\end{lem}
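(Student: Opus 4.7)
The plan is to derive both estimates directly from the pointwise controls \eqref{A1} and \eqref{A2} on $k_\pm$ supplied by Lemma \ref{lemma:kernels}. As the authors point out, these are precisely the hypotheses of the abstract kernel theorem of \cite{CRIPPA1}, so one option is simply to verify them and quote the result. I will instead outline the self-contained computation, which makes transparent why the logarithmic modulus $\ell$ defined in \eqref{ell:def} appears.

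For the $L^\infty$ bound, I would split
$$K_\pm f(x)=\int_{|x-z|\le 1} k_\pm(x-z) f(z)\,dz+\int_{|x-z|>1} k_\pm(x-z)f(z)\,dz.$$
The first piece is dominated by $\|f\|_{L^\infty}$ times the integral of $|w|^{-1}$ on a two-dimensional ball, which is finite; the second piece is dominated by $\|f\|_{L^1}$ since $|k_\pm(w)|\lesssim 1/|w|\le 1$ once $|w|\ge 1$. Summing yields $\|K_\pm f\|_{L^\infty}\lesssim \|f\|_{L^1\cap L^\infty}$.

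For the log-Lipschitz estimate, set $r\bydef|x-y|$. When $r\ge 1$ we have $\ell(r)=1$ and the claim follows by bounding each kernel separately using the first part. For $r<1$, I would split $\mathbb{R}^2$ into the near zone $\{|x-z|\le 2r\}$ and the far zone $\{|x-z|>2r\}$. On the near zone the triangle inequality together with \eqref{A1} gives a bound by $\|f\|_{L^\infty}$ times $\int_{|x-z|\le 2r}|x-z|^{-1}dz+\int_{|y-z|\le 3r}|y-z|^{-1}dz \lesssim r$. On the far zone one uses \eqref{A2}: since $|x-z|>2r=2|x-y|$ forces $|y-z|\ge |x-z|/2$, the integrand is controlled by $r/|x-z|^2$. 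I would then split the far zone once more at the scale $|x-z|=1$: the annular piece $2r<|x-z|<1$ produces the logarithmic term $r\log(1/r)\|f\|_{L^\infty}$, while the piece $|x-z|\ge 1$ contributes $O(r\|f\|_{L^1})$ because $1/|x-z|^2\le 1$ there. Collecting all pieces gives a bound $\lesssim r\log(e/r)\,\|f\|_{L^1\cap L^\infty}=\ell(r)\,\|f\|_{L^1\cap L^\infty}$, as desired.

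The main obstacle—if one can even call it one—is organizing the dyadic splitting so that the logarithm emerges with the right constant and only for $r<1$. The bound \eqref{A2} is already tailored to produce exactly this logarithm (it is the natural substitute for a true Lipschitz estimate, which would fail at the singularity), so no extra kernel-level analysis beyond Lemma \ref{lemma:kernels} is required.
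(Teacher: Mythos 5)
Your argument is correct, and it is a more elementary route than the one the paper takes. The paper does not prove the lemma directly: it simply observes that the pointwise bounds \eqref{A1} and \eqref{A2} of Lemma \ref{lemma:kernels} are exactly the hypotheses of \cite[Corollary 2.4, Remark 2.5]{CRIPPA1} and quotes that abstract result. You also note this option, but instead carry out the self-contained computation: the two-scale split (unit ball plus exterior) for the $L^\infty$ bound, and the three-region split (near zone $|x-z|\le 2r$, intermediate annulus $2r<|x-z|<1$, and far zone $|x-z|\ge 1$) for the log-Lipschitz estimate, with the near zone handled by the triangle inequality and \eqref{A1}, and the other two by \eqref{A2} together with the observation $|y-z|\gtrsim|x-z|$ when $|x-z|>2|x-y|$. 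Each piece is estimated correctly and sums to the stated $\ell(|x-y|)$ modulus. What the explicit computation buys is transparency about where the logarithm comes from (the intermediate annulus) and why the estimate saturates at $r=1$; what the paper's citation buys is brevity. Both ultimately rest on the same kernel bounds, so the choice between them is purely expository.
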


 \subsection{H{\"o}lder-continuity of some singular operators}
 For convenience, we state here several results on continuity properties of a specific type of singular operators. In particular, the ensuing bounds below will be employed  later on in the regularity analysis of   contour dynamics equation  associated with time-periodic solutions of \eqref{EQ}.

\begin{lem}[{\cite[Lemma 2.6]{HXX}}]\label{lemma:regularity-operator}
	Let $\alpha\in (0,1)$ and consider a measurable function $\mathcal{K}$ defined  on $\mathbb{T}\times \mathbb{T} \setminus \{(\theta,\theta), \theta \in \mathbb{T} \}$ with values in $\mathbb{C}$. Assume further, for all $ \theta\neq \eta \in \mathbb{T}$,  that 
	\begin{equation*}
		|\mathcal{K}(\theta,\eta) | \leq \frac{C_0 }{\left|\sin \left( \frac{\theta-\eta}{2} \right) \right|^{\alpha}  },
	\end{equation*}
	and
	\begin{equation*}
		|\mathcal{K}(\theta,\eta) | \leq \frac{C_0 }{\left|\sin \left( \frac{\theta-\eta}{2} \right) \right| ^{1+\alpha} },
	\end{equation*}
	for some $C_0>0$. Then, the integral operator  defined, for any $f\in L^\infty(\mathbb{T})$, by 
	\begin{equation*}
		\theta \mapsto \mathcal{T}f (\theta) \bydef \int_0^{2\pi}\mathcal{K}(\theta,\eta) f(\eta) d\eta
	\end{equation*}
	maps $L^\infty(\mathbb{T})$ into $C^\alpha(\mathbb{T})$. More precisely, it holds that  
	\begin{equation*}
		\norm {\mathcal{T}f}_{C^\alpha} \lesssim C_0 \norm {f}_{L^\infty}.
	\end{equation*}
\end{lem}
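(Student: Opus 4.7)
The plan is to prove the two halves of the $C^\alpha$ norm separately: the uniform bound $\|\mathcal{T}f\|_{L^\infty}\lesssim C_0\|f\|_{L^\infty}$ and the H\"older seminorm bound $[\mathcal{T}f]_{C^\alpha}\lesssim C_0\|f\|_{L^\infty}$. The uniform bound follows at once from the first displayed kernel estimate, together with the elementary fact that
\[
\int_0^{2\pi}\frac{d\eta}{\left|\sin\left(\tfrac{\theta-\eta}{2}\right)\right|^{\alpha}}
\]
is finite and independent of $\theta\in\mathbb{T}$ when $\alpha\in(0,1)$. I would read the second displayed estimate as the natural derivative control $|\partial_{\theta}\mathcal{K}(\theta,\eta)|\leq C_0|\sin((\theta-\eta)/2)|^{-(1+\alpha)}$, since otherwise it is strictly weaker than the first estimate and $C^\alpha$ regularity has no chance of following from kernel bounds alone. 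This is the standard hypothesis under which this type of statement appears in the literature.

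For the seminorm, fix $\theta_1\neq\theta_2\in\mathbb{T}$, set $\rho\bydef|\theta_1-\theta_2|$, and split the difference $\mathcal{T}f(\theta_1)-\mathcal{T}f(\theta_2)$ according to whether the integration variable $\eta$ lies in the near region $\{|\eta-\theta_1|\leq 2\rho\}$ or in its far complement (distances computed on $\mathbb{T}$). On the near piece I would abandon the cancellation and apply the first kernel estimate to $\mathcal{K}(\theta_1,\cdot)$ and $\mathcal{K}(\theta_2,\cdot)$ separately; since the domain has length of order $\rho$ relative to both base points and $\alpha<1$, each piece integrates to at most a universal constant times $C_0\|f\|_{L^\infty}\rho^{1-\alpha}$. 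On the far piece, where $|\sin((\theta_i-\eta)/2)|\gtrsim \rho$ for $i=1,2$, I would invoke the mean value theorem along the short arc joining $\theta_1$ and $\theta_2$ together with the derivative bound to obtain
\[
|\mathcal{K}(\theta_1,\eta)-\mathcal{K}(\theta_2,\eta)|\lesssim\frac{C_0\,\rho}{\left|\sin\left(\tfrac{\theta_1-\eta}{2}\right)\right|^{1+\alpha}},
\]
and then integrate in $\eta$ over the far region; the extra $\rho^{-\alpha}$ produced by this integral combines with the factor of $\rho$ in the numerator to give a contribution also of order $C_0\|f\|_{L^\infty}\rho^{1-\alpha}$.

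Adding the two contributions and noting that $\rho\simeq|\sin((\theta_1-\theta_2)/2)|$ on $\mathbb{T}$ delivers $[\mathcal{T}f]_{C^\alpha}\lesssim C_0\|f\|_{L^\infty}$; the regime where $|\theta_1-\theta_2|$ is bounded below is trivially absorbed by the $L^\infty$ bound already proved. The main technical point will be the reading of the second kernel estimate as a derivative bound; beyond that, the argument is a textbook Calder\'on-type dyadic splitting, with the only mild care needed being the passage between $|\theta-\eta|$ and $|\sin((\theta-\eta)/2)|$ on the circle (harmless, since the two are comparable uniformly on $\mathbb{T}$) and the book-keeping near the antipodal points of $\mathbb{T}$, which lie at bounded distance from both $\theta_i$ and are likewise absorbed into the far estimate.
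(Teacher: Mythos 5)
The paper does not actually prove this lemma; it is cited verbatim from \cite{HXX}, so there is no in-paper argument to compare against, and your attempt has to stand on its own.

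Your reading of the second displayed hypothesis as a bound on $\partial_\theta\mathcal{K}$ rather than on $\mathcal{K}$ is correct and, indeed, forced: since $|\sin|\le 1$, the second bound as printed is implied by the first and would carry no information, and no Hölder gain could come from a redundant size bound. This reading is also confirmed by the paper's own usage --- in the proof of Proposition~\ref{proposition regularity of the functional} the lemma is invoked after establishing exactly the pair of bounds \eqref{G:ES} and \eqref{DG:ES}, the second of which is a bound on $\partial_\theta G_{k,j}$. The $L^\infty$ estimate and the near/far splitting with the mean-value theorem are the standard Calder\'on-type argument, and your estimates of the two pieces are correct.

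There is, however, a genuine exponent mismatch in your final sentence. Both the near piece and the far piece give
\[
|\mathcal{T}f(\theta_1)-\mathcal{T}f(\theta_2)|\lesssim C_0\,\norm{f}_{L^\infty}\,\rho^{1-\alpha},\qquad \rho\bydef|\theta_1-\theta_2|,
\]
which is a $C^{1-\alpha}$ seminorm bound, not a $C^\alpha$ bound. These coincide only at $\alpha=\tfrac12$, and $C^{1-\alpha}\subset C^\alpha$ only when $\alpha\le \tfrac12$; for $\alpha>\tfrac12$ the computation does not deliver the asserted conclusion $\norm{\mathcal{T}f}_{C^\alpha}\lesssim C_0\norm{f}_{L^\infty}$. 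The classical form of this result (Hmidi--Mateu--Verdera and its descendants) assumes kernel bounds with exponents $-(1-\alpha)$ and $-(2-\alpha)$ and concludes $C^\alpha$; with the exponents $-\alpha$ and $-(1+\alpha)$ appearing here, the correct conclusion of your argument is $\mathcal{T}:L^\infty\to C^{1-\alpha}$. You should either flag this and state the conclusion as $C^{1-\alpha}$ (noting that the statement as transcribed in the paper likely carries a typo, harmless in its application since there $\alpha$ is chosen freely in $(0,1)$), or restrict to $\alpha\le\tfrac12$. As written, the last step of your argument asserts something the preceding estimates do not yield.
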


Later on, we are going to deal with operators having a singularity in their diagonal that corresponds to the endpoint case  $\alpha=1$  in the preceding lemma. The following lemma will then be useful in these situations.

\begin{lem}[{\cite[Proposition A.2]{GHR23}}]\label{lemma:regularity-operator2}
	Let $\alpha\in (0,1)$ and $g$ be a $C^{\alpha}(\mathbb{T})$ function. Further  consider a measurable function $\mathcal{K}$ defined  on $\mathbb{T}\times \mathbb{T} \setminus \{(\theta,\theta), \theta \in \mathbb{T} \}$ with values in $\mathbb{C}$ and assume, for all $ \theta\neq \eta \in \mathbb{T}$,  that 
	\begin{equation*}
		|\mathcal{K}(\theta,\eta) | \leq \frac{C_0 }{\left|\sin \left( \frac{\theta-\eta}{2} \right) \right|   },
	\end{equation*}
	and
	\begin{equation*}
		|\mathcal{K}(\theta,\eta) | \leq \frac{C_0 }{\left|\sin \left( \frac{\theta-\eta}{2} \right) \right| ^{2} },
	\end{equation*}
	for some $C_0>0$. Then, the integral operator  defined, for any $f\in L^\infty(\mathbb{T})$, by 
	\begin{equation*}
		\theta \mapsto \mathcal{T}_gf (\theta) \bydef \int_0^{2\pi}\mathcal{K}(\theta,\eta)\big(g(\theta)-g(\eta) \big) f(\eta) d\eta
	\end{equation*}
	maps $L^\infty(\mathbb{T})$ into $C^\alpha(\mathbb{T})$. More precisely, it holds that  
	\begin{equation*}
		\norm {\mathcal{T}_gf}_{C^\alpha} \lesssim C_0 \norm{g}_{C^\alpha} \norm {f}_{L^\infty}.
	\end{equation*}
\end{lem}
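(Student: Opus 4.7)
The strategy is to reduce Lemma~\ref{lemma:regularity-operator2} to the preceding Lemma~\ref{lemma:regularity-operator} by absorbing the Hölder factor $g(\theta)-g(\eta)$ into the kernel, which converts the bounds $|\sin|^{-1}$, $|\sin|^{-2}$ available on $\mathcal{K}$ into bounds of the form $|\sin|^{-(1-\alpha)}$, $|\sin|^{-(2-\alpha)}$ that fit the framework of that earlier lemma, with effective exponent $\beta=1-\alpha$ and constant $C_{0}\|g\|_{C^{\alpha}}$ in place of $C_0$.

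Concretely, since $|\theta-\eta| \lesssim |\sin((\theta-\eta)/2)|$ on the torus, the Hölder hypothesis on $g$ yields $|g(\theta)-g(\eta)| \lesssim \|g\|_{C^{\alpha}}|\sin((\theta-\eta)/2)|^{\alpha}$, so that the effective kernel $\widetilde{\mathcal{K}}(\theta,\eta):=\mathcal{K}(\theta,\eta)(g(\theta)-g(\eta))$ obeys
\begin{equation*}
|\widetilde{\mathcal{K}}(\theta,\eta)| \lesssim \frac{C_{0}\|g\|_{C^{\alpha}}}{|\sin((\theta-\eta)/2)|^{1-\alpha}} \quad \text{and} \quad |\widetilde{\mathcal{K}}(\theta,\eta)| \lesssim \frac{C_{0}\|g\|_{C^{\alpha}}}{|\sin((\theta-\eta)/2)|^{2-\alpha}}.
\end{equation*}
The $L^{\infty}$ estimate $\|\mathcal{T}_{g}f\|_{L^{\infty}} \lesssim C_{0}\|g\|_{C^{\alpha}}\|f\|_{L^{\infty}}$ follows immediately from the integrability of $|\sin((\theta-\eta)/2)|^{\alpha-1}$ on $\mathbb{T}$.

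For the Hölder seminorm, given $\theta_{1},\theta_{2}\in\mathbb{T}$ with $h:=|\theta_{1}-\theta_{2}|$ assumed small, I would split $\mathcal{T}_{g}f(\theta_{1})-\mathcal{T}_{g}f(\theta_{2})=I_{N}+I_{F}$, where $I_{N}$ integrates over the near region $N=\{\eta:\min_{i}|\theta_{i}-\eta|\leq 3h\}$ and $I_{F}$ over its far complement $F=\mathbb{T}\setminus N$. On $N$, bound the two contributions $\mathcal{K}(\theta_{i},\eta)(g(\theta_{i})-g(\eta))f(\eta)$ separately by the first effective bound; the elementary integral $\int_{0}^{3h}r^{\alpha-1}\,dr\sim h^{\alpha}$ then gives $|I_{N}|\lesssim C_{0}\|g\|_{C^{\alpha}}\|f\|_{L^{\infty}}\,h^{\alpha}$. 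On $F$, use the algebraic identity
\begin{equation*}
\widetilde{\mathcal{K}}(\theta_{1},\eta)-\widetilde{\mathcal{K}}(\theta_{2},\eta) = \bigl[\mathcal{K}(\theta_{1},\eta)-\mathcal{K}(\theta_{2},\eta)\bigr]\bigl(g(\theta_{2})-g(\eta)\bigr) + \mathcal{K}(\theta_{1},\eta)\bigl(g(\theta_{1})-g(\theta_{2})\bigr),
\end{equation*}
and exploit the stronger $|\sin|^{-2}$ bound on $\mathcal{K}$ as a mean-value surrogate to control the kernel difference; combined with $|g(\theta_{1})-g(\theta_{2})|\leq \|g\|_{C^{\alpha}}h^{\alpha}$, this produces $|I_{F}|\lesssim C_{0}\|g\|_{C^{\alpha}}\|f\|_{L^{\infty}}h^{\alpha}$.

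The most delicate step is the treatment of $I_{F}$: only pointwise size bounds on $\mathcal{K}$ are available, so the difference $\mathcal{K}(\theta_{1},\eta)-\mathcal{K}(\theta_{2},\eta)$ must be handled through a dyadic-annuli argument away from the diagonal, with the stronger $|\sin|^{-2}$ bound playing the role of a derivative estimate. The structure is identical to the one already implemented for Lemma~\ref{lemma:regularity-operator}, and the extra Hölder factor $(g(\theta)-g(\eta))$ is exactly what is needed to raise the output regularity back to $C^{\alpha}$ as claimed.
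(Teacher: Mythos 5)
The paper gives no proof here---Lemma~\ref{lemma:regularity-operator2} is simply quoted from \cite{GHR23}---so your blind attempt has to stand on its own, and it does not. The near-region bound $|I_N|\lesssim C_0\|g\|_{C^\alpha}\|f\|_{L^\infty}h^\alpha$ and the estimate of the first piece of your far-region split are fine. The gap is in the second piece, $(g(\theta_1)-g(\theta_2))\int_F\mathcal{K}(\theta_1,\eta)f(\eta)\,d\eta$: from the size bound $|\mathcal{K}(\theta_1,\eta)|\leq C_0|\sin((\theta_1-\eta)/2)|^{-1}$ alone one has $\int_F|\mathcal{K}(\theta_1,\eta)|\,d\eta\lesssim C_0\log(1/h)$, and that logarithm is sharp, so the term is only of size $C_0\|g\|_{C^\alpha}\|f\|_{L^\infty}\,h^\alpha\log(1/h)$, not $h^\alpha$. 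No dyadic-annuli refinement removes the log, because the kernel singularity has the critical exponent $-1$; this is not a missing technical estimate but an obstruction. Your opening observation is likewise off: absorbing $g(\theta)-g(\eta)$ into the kernel gives effective exponent $\beta=1-\alpha$, so an application of Lemma~\ref{lemma:regularity-operator} with that $\beta$ would output $C^{1-\alpha}$, not $C^\alpha$---nothing in that lemma ``raises'' the regularity back to $C^\alpha$.

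In fact the conclusion is false under the hypotheses as reproduced in the paper. Take $\mathcal{K}(\theta,\eta)=|\sin((\theta-\eta)/2)|^{-1}$, $g(\theta)=|\sin(\theta/2)|^{\alpha}\in C^{\alpha}(\mathbb{T})$ and $f\equiv 1$; both bounds on $\mathcal{K}$ hold. A change of variables yields
\[
\mathcal{T}_g f(\theta)-\mathcal{T}_g f(0)=\int_{\mathbb{T}}\frac{g(\theta)-g(\theta-\eta)+g(\eta)}{|\sin(\eta/2)|}\,d\eta .
\]
The numerator is nonnegative (by concavity of $g$ on $[0,\pi]$ one has $g(\theta+|\eta|)-g(|\eta|)\leq g(\theta)$ for $\eta<0$, and for $\eta>0$ it is a sum of two nonnegative terms), is $O(|\eta|^{\alpha})$ on $|\eta|\leq 2\theta$, and on $\eta\in[2\theta,\pi/2]$ it exceeds $g(\theta)\gtrsim\theta^{\alpha}$ since $g(\eta)>g(\eta-\theta)$. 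Hence the difference is bounded below by $c\,\theta^{\alpha}\log(1/\theta)-C\theta^{\alpha}$, which is not $O(\theta^{\alpha})$, so $\mathcal{T}_g f\notin C^{\alpha}(\mathbb{T})$. The cited Proposition~A.2 of \cite{GHR23} therefore must carry hypotheses beyond those quoted: at minimum the second bound should be on $\partial_\theta\mathcal{K}$ (consistent with how the hypotheses are verified in the proof of Proposition~\ref{proposition regularity of the functional}), and some cancellation structure in $\mathcal{K}$ is also required. Your proposal neither corrects the statement nor invokes any such cancellation, so the argument cannot close as written.
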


 \subsection{Crandall-Rabinowitz's theorem}
 The proof of Theorem \ref{Thm:2}, as we will show in Section \ref{section:V-states}, relies on a careful application of the generalized version of \textit{implicit functions} theorem---known as Crandall-Rabinowitz’s Theorem. The precise statement of that theorem reads as follows

		\begin{thm}[Crandall-Rabinowitz]\label{Crandall-Rabinowitz theorem}
	Let $X$ and $Y$ be two Banach spaces. Further consider  $V \subset X$  to be a neighborhood of $0$  and 
	$$F:\begin{array}[t]{rcl}
		\mathbb{R}\times V  \rightarrow   Y 
	\end{array}$$
	to be a function of class  $C^{1}$ with the following properties 
	\begin{enumerate} 
		\item (Trivial solution) For all $ \Omega\in\mathbb{R}$, we assume that $$F(\Omega,0)=0.$$
		\item (Regularity) Moreover, $F$ is assumed to be regular in the sense that  $\partial_\Omega F $, $\partial_x F $ and $ \partial_{\Omega x}^2F $ exist and are continuous.
		\item (Fredholm property) Furthermore, we assume that $\ker\left(\partial_{x}F(0,0)\right)$ is not trivial of dimension one, i.e., there is $x_0\in V$ such that
		$$\ker\left(\partial_{x}F(0,0)\right) =\langle x_{0}\rangle,$$  that  $R\left(\partial_{x}F(0,0)\right)$ is closed in $Y$  and that  $Y \setminus   R \left(\partial_{x}F(0,0)\right)$  is  one dimensional. 
		\item (Transversality assumption)  At last, we assume that  $ \partial_{\Omega x}^2F(0,0)x_{0}\not\in R\left(\partial_{x}F(0,0)\right).$
	\end{enumerate}
	Under the foregoing assumptions, for  $\chi$ being the   complement of $\ker\left(\partial_{x}F(0,0)\right)$ in $X$, there exist a neighborhood $U$ of $(0,0)$, an interval $(-a,a)$, for some $a>0$ and    continuous functions
	$$\psi:(-a,a)\rightarrow \mathbb{R}\quad\textnormal{and}\quad\phi:(-a,a)\rightarrow\chi$$
	such that $$\psi(0)=  \phi(0)=0$$ 
	and 
	$$\Big\{(\Omega,x)\in U: \; F(\Omega,x)=0\Big\}=\Big\{\big(\psi(s),sx_{0}+s\phi(s)\big): \;|s|<a\Big\}\cup\Big\{(\Omega,0)\in U\Big\}.$$
\end{thm}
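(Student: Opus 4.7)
The plan is to reduce the problem to a finite-dimensional one by Lyapunov--Schmidt decomposition and then apply the classical implicit function theorem in Banach spaces twice. To set up the reduction, I would exploit the Fredholm hypothesis to write $X=\langle x_0\rangle\oplus \chi$ and $Y=Z\oplus R\!\left(\partial_xF(0,0)\right)$, with $\chi$ a closed complement of the one-dimensional kernel and $Z$ a one-dimensional complement of the closed range. Let $P\colon Y\to R\!\left(\partial_xF(0,0)\right)$ and $Q=\operatorname{Id}-P$ be the associated continuous projections. Writing $x=sx_0+\phi$ with $s\in\mathbb{R}$ and $\phi\in\chi$, the equation $F(\Omega,x)=0$ is equivalent to the system
\begin{equation*}
P\,F(\Omega,sx_0+\phi)=0,\qquad Q\,F(\Omega,sx_0+\phi)=0.
\end{equation*}

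Next, I would solve the first (infinite-dimensional) equation for $\phi$ via the Banach-space implicit function theorem. Defining $H(\Omega,s,\phi)\bydef P\,F(\Omega,sx_0+\phi)$, one has $H(0,0,0)=0$ and
\begin{equation*}
\partial_\phi H(0,0,0)=P\circ \partial_xF(0,0)\big|_\chi\colon \chi\longrightarrow R\!\left(\partial_xF(0,0)\right).
\end{equation*}
This operator is a continuous bijection by the very choice of $\chi$ and $P$, hence a topological isomorphism by the open mapping theorem. The implicit function theorem therefore yields a $C^1$ map $(\Omega,s)\mapsto \phi(\Omega,s)$, defined near $(0,0)$, solving $H=0$. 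The assumption $F(\Omega,0)=0$ guarantees by uniqueness that $\phi(\Omega,0)=0$ for all small $\Omega$, and differentiating this identity gives $\partial_s\phi(0,0)\in \chi$ with $P\big(\partial_xF(0,0)\,x_0 + \partial_xF(0,0)\partial_s\phi(0,0)\big)=0$, i.e.\ $\partial_s\phi(0,0)=0$.

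Substituting $\phi(\Omega,s)$ into the complementary equation leaves a scalar equation $g(\Omega,s)\bydef Q\,F(\Omega,sx_0+\phi(\Omega,s))=0$ taking values in the one-dimensional space $Z$. Since $F(\Omega,0)\equiv 0$, we have $g(\Omega,0)\equiv 0$, which permits the factorization $g(\Omega,s)=s\,\tilde g(\Omega,s)$ with $\tilde g$ continuous and $\tilde g(\Omega,0)=\partial_s g(\Omega,0)$. A direct computation using $\partial_s\phi(0,0)=0$ yields
\begin{equation*}
\tilde g(0,0)=Q\!\left(\partial_xF(0,0)\,x_0\right)=0,\qquad \partial_\Omega \tilde g(0,0)=Q\!\left(\partial^2_{\Omega x}F(0,0)\,x_0\right),
\end{equation*}
and the transversality assumption ensures that the latter is nonzero in $Z$. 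A final application of the (one-dimensional) implicit function theorem produces a $C^1$ function $\Omega=\psi(s)$ with $\psi(0)=0$ solving $\tilde g(\psi(s),s)=0$; setting $\phi(s)\bydef s^{-1}\!\left(\phi(\psi(s),s)-\phi(\psi(s),0)\right)$ (which extends continuously to $s=0$ with value $\partial_s\phi(0,0)=0$) recovers the parametrization claimed in the theorem. The local description of the zero set near $(0,0)$ then follows by combining the uniqueness clauses of the two invocations of the implicit function theorem.

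The main obstacle I anticipate is the justification of the division by $s$: one needs enough smoothness of $F$ to guarantee that $\tilde g$ is of class $C^1$ (and not merely continuous) so that the implicit function theorem is applicable at the second step. This is where the continuity of $\partial_\Omega F$, $\partial_xF$, and $\partial^2_{\Omega x}F$ enters crucially; a Taylor expansion of $g$ around $s=0$, together with the uniform regularity of $\phi$ produced by the first implicit function theorem, provides the required regularity of $\tilde g$ after dividing out the explicit factor of $s$.
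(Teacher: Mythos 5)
Your Lyapunov--Schmidt-then-bifurcation-equation scheme is the right general shape, but there is a genuine gap precisely at the point you flag as the ``main obstacle'': the $C^1$ regularity of $\tilde g$ does \emph{not} follow from the stated hypotheses. Writing $\tilde g(\Omega,s)=\int_0^1 \partial_s g(\Omega,ts)\,dt$ gives continuity, and $\partial_s g(\Omega,s)=Q\,\partial_x F\bigl(\Omega,sx_0+\phi(\Omega,s)\bigr)\bigl[x_0+\partial_s\phi(\Omega,s)\bigr]$, but differentiating this in $\Omega$ via the chain rule produces
\begin{equation*}
Q\,\partial^2_{\Omega x}F(\cdot)\bigl[x_0+\partial_s\phi\bigr]
\;+\;
Q\,\partial^2_{xx}F(\cdot)\bigl[\partial_\Omega\phi,\;x_0+\partial_s\phi\bigr]
\;+\;
Q\,\partial_x F(\cdot)\bigl[\partial^2_{\Omega s}\phi\bigr].
\end{equation*}
The middle term requires $\partial^2_{xx}F$, which is not among the assumptions, and the last requires $\phi$ to be $C^2$, which does not follow from an implicit function theorem applied to a map that is only $C^1$. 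Your concluding claim that the continuity of $\partial_\Omega F$, $\partial_x F$ and $\partial^2_{\Omega x}F$ together with ``uniform regularity of $\phi$'' suffices is therefore not correct: your two-step reduction genuinely needs second derivatives in the $x$ variable.

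The classical Crandall--Rabinowitz argument sidesteps this by folding the two steps into a single application of the implicit function theorem with a rescaled parametrization. Write the unknown as $x=s(x_0+z)$ with $z\in\chi$, and set $G(s,z,\Omega)=s^{-1}F\bigl(\Omega,s(x_0+z)\bigr)$ for $s\neq 0$, extended by $G(0,z,\Omega)=\partial_x F(\Omega,0)(x_0+z)$; one then solves $G=0$ for $(z,\Omega)$ as a function of $s$. The rescaling absorbs exactly one derivative: $\partial_z G(s,z,\Omega)[\zeta]=s^{-1}\,\partial_x F(\Omega,s(x_0+z))[s\zeta]=\partial_x F(\Omega,s(x_0+z))[\zeta]$, so no $\partial^2_{xx}F$ is needed, and $\partial_\Omega G(s,z,\Omega)=s^{-1}\partial_\Omega F(\Omega,s(x_0+z))=\int_0^1\partial^2_{\Omega x}F(\Omega,\tau s(x_0+z))(x_0+z)\,d\tau$ (using $\partial_\Omega F(\Omega,0)\equiv 0$) is continuous by the hypothesis on $\partial^2_{\Omega x}F$. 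At the base point, $\partial_{(z,\Omega)}G(0,0,0)(\zeta,\mu)=\partial_x F(0,0)\zeta+\mu\,\partial^2_{\Omega x}F(0,0)x_0$ is an isomorphism $\chi\times\mathbb{R}\to Y$ by the Fredholm and transversality hypotheses, and the implicit function theorem yields the $C^1$ curve $s\mapsto(z(s),\Omega(s))$; setting $x=sx_0+sz(s)$ recovers the claimed parametrization and the local uniqueness of the zero set. You should restructure your argument along these lines rather than trying to patch the regularity of $\tilde g$.
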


 \section{Weak and Lagrangian solutions}\label{section:Yudovich}
 This section is devoted to the proof of Theorem \ref{Thm:1}. We proceed first by introducing several notation that will be employed in the proof below. For a given $\delta>0$, we introduce the matrix
  \begin{equation}\label{A-delta:def}
\mathcal{A}_\delta \bydef    \begin{pmatrix} 
		 1 & \delta ^{-1}\\
		 1 & -1
	\end{pmatrix} 
\end{equation} 
and, for a couple of real valued  functions  $f=(f_1,f_2) $, we define
\begin{equation*} 
	\begin{pmatrix}
		f_+\\f_- 
	\end{pmatrix} \bydef   
	\mathcal{A}_{\delta}
	 \cdot
	\begin{pmatrix}
	f_1 \\ f_2
	\end{pmatrix}.
\end{equation*}
Note that
\begin{equation*} 
	\begin{pmatrix}
		f_1\\f_2 
	\end{pmatrix} = \mathcal{A}^{-1}_\delta
	 \cdot
	\begin{pmatrix}
	f_+ \\ f_-
	\end{pmatrix},
\end{equation*} 
where $\mathcal{A}_\delta^{-1}$ is the inverse of $\mathcal{A}_\delta$, which is given by 
$$\mathcal{A}^{-1}_\delta =  \left(1+ \delta^{-1}\right)^{-1}
	A_\delta.$$
Moreover, for another couple of real valued functions $ \widetilde{f}=(\widetilde{f}_1,\widetilde{f}_2)$,  the following elementary inequality can justified by a direct computation
\begin{equation*} 
\begin{aligned}
\frac{1}{ 2 \max\{ 1, \delta^{-1} \}} &\left(   \textbf{d}( f_+ , \widetilde{f} _+) + \textbf{d}( f_- , \widetilde{f} _-) \right) \\
& \qquad \qquad    \leq  \;\textbf{d}( f_1 , \widetilde{f} _1) + \textbf{d}( f_2 , \widetilde{f} _2)\\
&\qquad  \qquad  \qquad  \qquad  \qquad    \leq  \frac{1 + \max\{ 1, \delta^{-1} \}}{1+ \delta^{-1}} \left( \textbf{d}( f_+ , \widetilde{f} _+) + \textbf{d}( f_- , \widetilde{f} _-) \right), 
\end{aligned}   
\end{equation*}   
where $\textbf{d}(A,B)$ denotes the distance between $A$ and $B$.   For simplicity, we shall use the following version of the preceding inequalities
\begin{equation}\label{inequa:2}
\begin{aligned}
C_{\delta}^{-1} \left(   \textbf{d}( f_+ , \widetilde{f} _+) + \textbf{d}( f_- , \widetilde{f} _-) \right) \leq   \;\textbf{d}( f_1 , \widetilde{f} _1) + \textbf{d}( f_2 , \widetilde{f} _2)  \leq C_{\delta} \left( \textbf{d}( f_+ , \widetilde{f} _+) + \textbf{d}( f_- , \widetilde{f} _-) \right), 
\end{aligned}   
\end{equation}   
where $C_{\delta}\geq 1$ is a constant that only depends  on $\delta.$
\subsection{Existence of global solutions}
Here, we show existence of global solutions of \eqref{EQ} as is claimed in Theorem \ref{Thm:1}.  

\subsubsection{Existence of weak-distributional solutions} Weak solutions can be constructed by analyzing the convergence of the scheme below associated with a  linearized transport equations. To that end, we assume that we have constructed a velocity $u^n_{i}$, for $n\in \mathbb{N}$, then   build up the next iteration $(\omega^{n+1},u^{n+1})$ by solving the linear system of equations
\begin{equation*} 
\left\{
\begin{array}{ll}
\partial_t \omega_i^{n+1} + v_i^{n } \cdot \nabla \omega_i^{n+1}=0,\\ \vspace{1mm} \\
\begin{pmatrix}
		\omega _+^{n+1}\\
		  \omega _-^{n+1}
	\end{pmatrix} \bydef    \mathcal{A}_{\delta}  	\cdot\begin{pmatrix}
		 \omega _1 ^{n+1}\\
		  \omega _2^{n+1}
	\end{pmatrix}
 , \\ ~~\\
   v_+^{n+1} \bydef    K_+ \omega_+^{n+1} , \qquad v_-^{n+1}  \bydef    K_- \omega_-^{n+1} ,\\ \vspace{1mm} \\
 	\begin{pmatrix}
		 v_1^{n+1}\\
		  v_2^{n+1}
	\end{pmatrix} \bydef    \mathcal{A}_{\delta}^{-1} 	\cdot\begin{pmatrix}
		 v_+^{n+1}\\
		  v_-^{n+1}
	\end{pmatrix},
\end{array}
\right.
\end{equation*}
supplemented with the initial data
$$ \omega_i^{n+1}{|_{t=0}} = \omega_i(0), \quad \text{for all} \quad n\in \mathbb{N},$$
 and the initial iteration
  $$
\begin{pmatrix}
		 v_1^{0}\\~~\\
		  v_2^{0}
	\end{pmatrix} \bydef     \mathcal{A}_{\delta}^{-1} \cdot \begin{pmatrix}
		 v_+^{0}\\~~\\
		  v_-^{0}
	\end{pmatrix} =  \mathcal{A}_{\delta}^{-1} \cdot \begin{pmatrix}
		 K_+ \omega_+(0)\\~~\\
		  K_-\omega_- (0)
	\end{pmatrix} =  \mathcal{A}_{\delta}^{-1} \cdot \begin{pmatrix}
		 K_+ \left(\mathcal{A}_\delta \cdot   \begin{pmatrix}
		 \omega _1 (0)\\
		  \omega _2(0)
	\end{pmatrix}\right)\\ ~~\\
		  K_- \left(\mathcal{A}_\delta \cdot   \begin{pmatrix}
		 \omega _1 (0)\\
		  \omega _2(0)
	\end{pmatrix}\right)
	\end{pmatrix}.
 $$
The adequate bounds can be proved by induction and by performing   standard energy estimates on  transport equation, altogether with the employment of the bounds on the operators $K_{\pm}$ given by Lemma \ref{kernel:ES}. This process establishes the existence of bounded sequence  of solutions $ (\omega_1^n, \omega_2^n)_{n\in\mathbb{N}}$ that convergences, in the sense of distributions, to a weak solution of the system of equations \eqref{EQ}. The compactness of the approximate solution $ (\omega_1^n, \omega_2^n)_{n\in\mathbb{N}}$ and the convergence in nonlinear terms can be done by using classical techniques. Finally, we emphasize that the resulting limit as $n\to \infty$ enjoys the following  bounds 
$$ \omega_1,  \omega_2\in C(\mathbb{R}^+, L^p(\mathbb{R}^2)) \cap C_w(\mathbb{R}^+, L^\infty(\mathbb{R}^2)) ,  $$
$$v_1, v_2 \in C(\mathbb{R}^+, L^q(\mathbb{R}^2)) \cap L^\infty(\mathbb{R}^+, LL(\mathbb{R}^2)) ,$$
for all $p\in [1,\infty)$ and   $q\in (2,\infty].$

\subsubsection{Existence of Lagrangian solutions} Now, we show that any weak solution is Lagrangian. To that end, for all $i\in\{1,2\}$, we first define the flow-maps $X_i$  associated with the  velocity field $v_i$ as  the unique solution of  \eqref{ODE}. Again, we emphasize that  the flow-map $X_i$ is uniquely determined due to the Log-lipschitz regularity of $v_i$ (see \cite[Section 5.2]{CH95}). Therefore, we define the Lagrangian solution $ \Omega_i$ of \eqref{EQ} by the push forward 
$$\Omega_i \bydef     \omega_{i,0}\left(X_i^{-1}(t,\cdot) \right) ,\quad \text{for all}\quad t\geq 0 .$$
Observe that $ \Omega_i $ satisfies the same transport equation   as $ \omega_i$ (with velocity $v_i$), i.e.,
$$ \partial_t \Omega_i + v_i \cdot \nabla \Omega_i =0.$$ Hence, by classical results on transport equations (see for instance \cite{DL}) we deduce that $\Omega_i = \omega_i$, thereby establishing that $\omega_i$ is a Lagrangian solution.

\subsection{Stability of the flow-maps and uniqueness} The proof of the uniqueness follows     the idea from \cite{CRIPPA1} and will be acheived by establishing a stability result for the flow-maps associated with two different solutions. 

In the sequel, $C_0>0$ denotes  a constant that depends only on   the $L^1\cap L^\infty(\mathbb{R}^2)$ norm  of the initial data and is allowed to differ from one line to the next one.

Let $(v_i,\omega_i)$ and $(\widetilde{v}_i,\widetilde{\omega}_i)$ be two Lagrangian solutions to \eqref{EQ} with the same initial data $\omega_{i,0}$ belonging to $ L^1\cap L^\infty (\mathbb{R}^2)$. We denote by $X_i$ and $\widetilde{X}_i$ the unique flows associated to each solution, for $i\in \{1,2\}$. Further introduce 
$$
\begin{pmatrix}
		X_+\\~~\\
		X_-
	\end{pmatrix} \bydef     \mathcal{A}_{\delta}  \cdot \begin{pmatrix}
		X_1\\~~\\
		 X_2
	\end{pmatrix} , \qquad 
	\begin{pmatrix}
		\widetilde{X}_+\\~~\\
		\widetilde{X}_-
	\end{pmatrix} \bydef     \mathcal{A}_{\delta}  \cdot \begin{pmatrix}
		\widetilde{X}_1\\~~\\
		 \widetilde{X}_2
	\end{pmatrix},
	$$
	where $A_{\delta}$ is the matrix defined in \eqref{A-delta:def}. Then,    in view of the equation \eqref{ODE} satisfied by each  flow-map, we compute that
 \begin{equation*}
 \begin{aligned}
 X_+(t) - \widetilde{X}_+(t) & = \int_0^t \left(v_1(X_1(s)) + \delta^{-1} v_2 (X_2(s)) - \widetilde{v}_1(\widetilde{X}_1(s)) - \delta^{-1}\widetilde{v}_2 (\widetilde{X}_2(s)) \right) ds \\
  &= \int_0^t \left( v_1 (X_1(s)) - v_1(\widetilde{X}_1(s)) + (v_1 - \widetilde{v} _1)(\widetilde{X}_1(s)) \right) ds \\
&\quad +  \delta^{-1}  \int_0^t \left( v_2 (X_2(s)) - v_2(\widetilde{X}_2(s)) + (v_2 - \widetilde{v} _2)(\widetilde{X}_2(s)) \right) ds ,
 \end{aligned}
 \end{equation*}
 for all $t\geq 0$. Therefore, by further introducing the fluctuation
  $$  X \bydef    |X_1- \widetilde{X}_1| + |X_2-\widetilde{X}_2|,$$
 and  owing to the fact that $ v_1$ and $v_2$ are Log-Lipschitz, altogether with \eqref{inequa:2}, we infer that
 \begin{equation*}
 \begin{aligned}
 \big|X_+(t) - \widetilde{X}_+(t)\big| & \lesssim_ {\delta} C_0 \int_0^t \ell(| X_1-\widetilde{X}_1(s)|) + \ell(| X_2-\widetilde{X}_2(s)|)    ds \\
 &\qquad+ \int_0^t \big|( v_+ -\widetilde{v}_+)(\widetilde{X}_1(s)) \big|+ \int_0^t \big|( v_- -\widetilde{v}_-)(\widetilde{X}_1(s)) \big|ds\\
 &\qquad+ \int_0^t \big|( v_+ -\widetilde{v}_+)(\widetilde{X}_2(s)) \big|+ \int_0^t \big|( v_- -\widetilde{v}_-)(\widetilde{X}_2(s)) \big|ds\\
 & \lesssim_\delta C_0 \int_0^t \ell(  X(s))    ds \\
 & \quad +\sum_{i=1}^2 \left( \int_0^t \big|( v_+ -\widetilde{v}_+)(\widetilde{X}_i(s)) \big|  ds + \int_0^t \big|( v_- -\widetilde{v}_-)(\widetilde{X}_i(s)) \big|  ds \right),
 \end{aligned}
 \end{equation*} 
where  the function $\ell$ is defined in \eqref{ell:def}.  
Moreover, in the same way, we obtain that
 \begin{equation*}
 \begin{aligned}
 \big|X_-(t) - \widetilde{X}_-(t)\big|    
 & \lesssim_{\delta} C_0 \int_0^t \ell(  X(s))    ds  \\
 & \quad +\sum_{i=1}^2 \left( \int_0^t \big|( v_+ -\widetilde{v}_+)(\widetilde{X}_i(s)) \big|  ds + \int_0^t \big|( v_- -\widetilde{v}_-)(\widetilde{X}_i(s)) \big|  ds \right).
 \end{aligned}
 \end{equation*}
Hence, utilizing \eqref{inequa:2}, again, entails that
  \begin{equation*}
 \begin{aligned}
  X(t)  
 & \lesssim_\delta C_0 \int_0^t \ell(  X(s))    ds +\sum_{i=1}^2 \left( \int_0^t \big|( v_+ -\widetilde{v}_+)(\widetilde{X}_i(s)) \big|  ds + \int_0^t \big|( v_- -\widetilde{v}_-)(\widetilde{X}_i(s)) \big|  ds \right).
 \end{aligned}
 \end{equation*}
 On the other hand,   observing, for all $i\in \{1,2\}$, that    
  \begin{equation*}
 \begin{aligned}
 \big|( v_+ -\widetilde{v}_+)(\widetilde{X}_i) \big|  & = \big|(K _+\omega_+(\widetilde{X}_i) - K_+\widetilde{\omega} _+(\widetilde{X}_i) \big| \\
 &\lesssim_{\delta} \sum_{j=1}^2 \big|(K _+\omega_j(\widetilde{X}_i) - K_+\widetilde{\omega} _j(\widetilde{X}_i) \big|   ,
 \end{aligned}
 \end{equation*}
  and   exploiting the definition of   $K_+$ allows us to write, for all $i,j\in \{1,2\}$, that 
   \begin{equation*}
 \begin{aligned}
  \big|(K_+ \omega_j(\widetilde{X}_i) - K_+\widetilde{\omega} _j(\widetilde{X}_i) \big|  &= \Big|  \int_{\mathbb{R}^2} k_+(\widetilde{X}_i(s,x),y) \omega_j(y) dy - k_+(\widetilde{X}_i(s,x),y) \widetilde{\omega} _j(y) dy  \Big| \\
  &= \Big|  \int_{\mathbb{R}^2} k_+(\widetilde{X} _i(s,x),X _j(s,y) ) \omega_{j,0}(y)  \\
  & \qquad \qquad - k_+(\widetilde{X}_i(s,x),\widetilde{X} _j(s,y)) \omega  _{0,j}(y) dy  \Big| \\
  &\leq    \int_{\mathbb{R}^2}\Big| k_+(\widetilde{X} _i(s,x),X _j(s,y) )   \\
  & \qquad \qquad- k_+(\widetilde{X}_i(s,x),\widetilde{X} _j(s,y))\Big| \Big|  \omega  _{0,j}(y)\Big| dy.  
 \end{aligned}
 \end{equation*}
 Consequently, we find that 
 \begin{equation*}
 \begin{aligned}
  \big|( v_+-\widetilde{v}_+)(\widetilde{X}_i) \big|  &  
  &\lesssim_\delta  \sum_{j=1}^2   \int_{\mathbb{R}^2}\Big| k_+(\widetilde{X} _i(s,x),X _j(s,y) )   -  k_+(\widetilde{X}_i(s,x),\widetilde{X} _j(s,y))\Big| \Big|  \omega  _{0,j}(y)\Big| dy.  
 \end{aligned}
 \end{equation*}
 Likewise, we obtain for  the minus parts
  \begin{equation*}
 \begin{aligned}
  \big|( v_- -\widetilde{v}_-)(\widetilde{X}_i) \big|  & = \big|(K_- \omega_-(\widetilde{X}_i) - K_-\widetilde{\omega} _-(\widetilde{X}_i) \big| \\
  &\lesssim_\delta \sum_{j=1}^2 \big|(K_- \omega_j(\widetilde{X}_i) - K_-\widetilde{\omega} _j(\widetilde{X}_i) \big| \\
  &\lesssim_\delta  \sum_{j=1}^2   \int_{\mathbb{R}^2}\Big| k_-(\widetilde{X} _i(s,x),X _j(s,y) )     -  k_-(\widetilde{X}_i(s,x),\widetilde{X} _j(s,y))\Big| \Big|  \omega  _{0,j}(y)\Big| dy.  
 \end{aligned}
 \end{equation*} 
All in all, gathering the foregoing estimates   yields
  \begin{equation}\label{pre-last:X}
 \begin{aligned}
   X (t)   
 & \lesssim_{\delta} C_0 \int_0^t \ell( X(s))    ds \\
 &\quad +\sum_{\overset{i,j=1}{m = \pm}}^2   \int_0^t \int_{\mathbb{R}^2}\Big| k_m(\widetilde{X} _i(s,x),X _j(s,y) )    -  k_m(\widetilde{X}_i(s,x),\widetilde{X} _j(s,y))\Big| \Big|  \omega  _{0,j}(y)\Big| dy ds.   
 \end{aligned}
 \end{equation}
Onwards, we   omit the subscript $\delta$ and we keep in mind that the constants in the   estimates  below depend on $\delta,$ as well. 
Let $\alpha>0$ be a function in $L^1\cap L^\infty(\mathbb{R}^2) $. Accordingly, we define the density  $$\eta  \bydef    \sum_{ i=1}^2 |\omega_{i,0}| + \alpha$$   and its push-forward by the mapping $ \widetilde{X}_j(s,\cdot)$  
   $$ \eta_j(s,\cdot)\bydef     \eta( {\widetilde{X}_j }^{-1} (s,\cdot)).$$ 
 Then,  we proceed by integrating \eqref{pre-last:X} against   to the measure $ \eta(x)dx$ and   utilizing Fubini's theorem  to obtain that
    \begin{equation*}
 \begin{aligned}
&\int_{\mathbb{R}^2}  X   (t,x) \eta(x)dx
  \lesssim C_0 \int_0^t  \int_{\mathbb{R}^2}\ell\big( X(s,x)\big)  \eta(x)dx ds \\
 &\quad \quad +\sum_{\overset{i,j=1}{m = \pm}}^2     \int_0^t \int_{\mathbb{R}^4}\Big| k_m(\widetilde{X} _i(s,x),X _j(s,y) )   -  k_m(\widetilde{X}_i(s,x),\widetilde{X} _j(s,y))\Big| \Big|  \omega  _{0,j}(y)\Big| dy \eta(x)dx ds\\
 &  \quad = C_0 \int_0^t  \int_{\mathbb{R}^2}\ell\big( X(s,x)\big)   \eta(x)dx  ds  \\
 & \quad \quad+\sum_{\overset{i,j=1}{m = \pm}}^2     \int_0^t \int_{\mathbb{R}^2}  \Big|  \omega  _{0,j}(y)\Big|     \int_{\mathbb{R}^2}\Big| k_m(\widetilde{X} _i(s,x),X _j(s,y) ) -  k_m(\widetilde{X}_i(s,x),\widetilde{X} _j(s,y))\Big|  \eta(x)dx   dy  ds.
 \end{aligned}
 \end{equation*}
Now,  owing to the fact that $\widetilde{X}_i(s,\cdot)$ preserves volumes, for any $i\in\{ 1,2\}$ and $s\geq 0$, and appealing to Lemma \ref{kernel:ES} entails, for any $i,j\in \{1,2\}$ and $m = \pm $, that
   \begin{equation*}
 \begin{aligned}
 \int_{\mathbb{R}^2}\Big| k_\pm(\widetilde{X} _i(s,x),X _j(s,y) )     &-  k_\pm(\widetilde{X}_i(s,x),\widetilde{X} _j(s,y))\Big|  \eta(x)dx  \\
 &=   \int_{\mathbb{R}^2}\Big| k_\pm(x,X _j(s,y) )      -  k_\pm(x,\widetilde{X} _j(s,y))\Big|  \eta_j (  s,x )dx \\ 
 & \lesssim \norm { \eta_j (  s,\cdot ) }_{L^1\cap L^\infty} \ell\big( |X _j(s,y) - \widetilde{X} _j(s,y) |\big)\\
 & \lesssim C_0\ell\big(     X (s,y)\big).
 \end{aligned}
 \end{equation*}
 Therefore, it follows that
   \begin{equation*}
 \begin{aligned}
\int_{\mathbb{R}^2}   X   (t,x)  \eta(x)dx
  &\lesssim C_0 \int_0^t  \int_{\mathbb{R}^2}\ell( X(s,x)) \eta(x)dx ds  \\
  &\qquad+ C_0\sum_{j=1}^2     \int_0^t \int_{\mathbb{R}^2}  \Big|  \omega  _{0,j}(y)\Big|\ell( X(s,y))      dy  ds\\
  &\lesssim C_0 \int_0^t  \int_{\mathbb{R}^2}\ell(  X(s,x))  \eta(x)dx ds   .
 \end{aligned}
 \end{equation*}
 At last, since the  function $\ell$ is concave and $\eta \in L^1(\mathbb{R}^2)$, we end up with
 $$\int_{\mathbb{R}^2}  X   (t,x)  \eta(x)dx \lesssim C_0 \int_0^t \ell\left( \int_{\mathbb{R}^2}  X   (s,x)  \eta(x)dx \right)  ds.$$ 
 The uniqueness of the flow-maps follows due to Osgood's Lemma, see  \cite[Lemma 3.4]{bahouri2011fourier}, for instance. Consequently, we deduce that $\omega_i \equiv \widetilde{\omega}_i $, thereby showing the uniqueness of the solution and completing the proof of  Theorem \ref{Thm:1}.  \qed

  \section{Uniformly rotating solutions}\label{section:V-states}
  This section is devoted to the proof of    existence of rotating $m$-fold vortex patches (relative) equilibriums for the Surface Quasi-Geostrophic  model with two boundaries   given by \eqref{EQ}. The statement of that is given by Theorem \ref{Thm:2-A} which we recast below in a more precise format. To that end, allow us first to set up few notations that will be constantly used throughout this section. For a given integer $m\in \mathbb{N}^*$, and real numbers $0<b_2\leq b_1<\infty$, we introduce the set 
  $$ {S}_{m,b_1}\bydef    \left\{ s\in (0,b_1]: \exists n\in\mathbb{N}^*,\ \Omega_{m}^-(s,b_1) =   \Omega_n^+(s,b_1) \right\}$$ 
  where
  
  \begin{equation*} 
\Omega_n^{\pm}(b_1,b_2)\bydef \frac{1}{2(\delta + 1)} \left(  - (A_{b_1,b_2,n}+B_{b_1,b_2,n})\pm  \sqrt{(A_{b_1,b_2,n}-B_{b_1,b_2,n})^2 +{4\delta}\gamma_{b_1,b_2,n} ^2} \right),
\end{equation*}
where, we set 
\begin{equation*} 
     \begin{aligned}
	A_{b_1,b_2,n}&\bydef   (\delta+1)V_{b_1,b_2}+\frac{\delta}{2n}+I_{n}(b_1\mu)K_{n}(b_1\mu) 
	\\
	B_{b_1,b_2,n}&\bydef   (\delta+1)W_{b_1,b_2}+\frac{1}{2n}+{\delta} I_{n}(b_2\mu)K_{n}(b_2\mu),
	\\
	\gamma_{b_1,b_2,n} & \bydef  \ \frac{b^{n}}{2n}-I_{n}(b_2\mu )K_{n}(b_1\mu) , 
\end{aligned}
\end{equation*} 
  with $b\bydef \frac{b_2}{b_1}$ and
  \begin{equation}\label{V120}
      		\begin{aligned}  
	 V_{b_1,b_2} & \bydef-\frac{\delta+b^2}{2(1+\delta)}-\frac{1}{1+\delta}\Big(I_{1}(b_1\mu)K_{1}(b_1\mu)-b I_{1}(b_2 \mu)K_{1}(b_1\mu)\Big) ,
	 \\
	 W_{b_1,b_2}  &\bydef  -\frac12-\frac{\delta}{1+\delta}\Big(I_{1}(b_2\mu)K_{1}(b_2\mu)-\frac 1b I_{1}(b_2 \mu)K_{1}(b_1 \mu)\Big).
	\end{aligned}
	\end{equation}
	Above and in what follows, we denote
$$
  \mu\bydef   \lambda\sqrt{1+\delta},
$$
where $\Lambda, \delta>0$ are the constants appearing in the system of equations \eqref{EQ}, whereas  we recall that $I_n$ and $K_n$, for $n\in \mathbb{N}^*$, stand for the modified Bessel functions previously introduced in Section \ref{section:Bessel}.

For a mere simplification of notation, we drop the dependence of $\Omega_n^\pm(b_1,b_2)$, as well as $A_{b_1,b_2,n}$ and $B_{b_1,b_2,n}$, on $b_1$ and $b_2$ even though this is crucial and will be utilized in our proofs, later on. 

Theorem \ref{Thm:2-A} is a consequence of the following slightly more precise theorem.
  
  \begin{thm}[Periodic solutions bifurcating from simple eigenvalues]\label{Thm:2}
  Let $m\in\mathbb{N}^*$ and  $\lambda >0$. Further fix two radii $0<b_2\leq b_1$ and assume that $ \delta \geq b^2$, where $b=\frac{b_2}{b_1}$. Then, the set $S_{m,b_1}$ defined above contains at most a finite number of elements. Moreover,  the following holds:
  \begin{itemize}
  	\item Assuming  that $b_2$ does not belong to $  S_{m,b_1}$, there exist  two curves of $m$-fold symmetric pairs of   simply connected V-states, for any $m\in \mathbb{N}^*$, bifurcating from the steady states 
  	\begin{equation*} 
  \omega_1= \mathds{1}_{\{x\in \mathbb{R}^2 :  |x| < b_1\}} \quad \text{and} \quad \omega_2= \mathds{1}_{\{x\in \mathbb{R}^2 :  |x| < b_2\}}
  \end{equation*} 
  	  at each of the angular velocities $\Omega=\Omega_m^\pm$ defined above.
    \item Assuming $b_1\neq b_2$, there is $m_0\in \mathbb{N}^*$ such that  $S_{m,b_1}$ is empty for any $m\geq m_0$ and for which there exist two curves of $m$-fold symmetric pairs of   simply connected V-states satisfying the same properties as in the preceding case. 
  \end{itemize}  
 \end{thm}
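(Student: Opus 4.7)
The natural framework is the contour dynamics formulation: parametrize the boundaries $\partial D_i$, for $i\in\{1,2\}$, as perturbations of the circles $b_i \mathbb{T}$, say through conformal parametrizations of the form
\begin{equation*}
z_i(w) = b_i w + b_i \sum_{n\geq 1} a_n^{(i)} \overline{w}^{\,mn-1}, \qquad w\in \mathbb{T},
\end{equation*}
imposing $m$-fold symmetry from the outset and working on the Hölder scale $C^{1+\alpha}(\mathbb{T})$. Inserting \eqref{def streamL1}--\eqref{def Gkj} into the rotating-patch condition \eqref{vortex patch equation} and transforming the boundary integrals on $\partial D_i$ into integrals on $\mathbb{T}$, one obtains a pair of scalar functional equations $F_i(\Omega, f_1, f_2)(\theta) = 0$, $i\in\{1,2\}$, where $f_i$ collects the Fourier coefficients $a_n^{(i)}$. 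The regularity statements for these coupled nonlinear operators, viewed as maps from a neighborhood of the origin in $X^2 = [C^{1+\alpha}_m(\mathbb{T})]^2$ into $Y^2 = [C^{\alpha}_m(\mathbb{T})]^2$, follow from Lemmas \ref{lemma:regularity-operator}--\ref{lemma:regularity-operator2} applied to the Laplace kernel $\log|\cdot|$ and the shallow-water kernel $K_0(\mu\,|\cdot|)$; the crucial point is that the apparently most singular parts have kernels with exactly the $|\sin(\tfrac{\theta-\eta}{2})|^{-1}$ and $|\sin(\tfrac{\theta-\eta}{2})|^{-2}$ bounds required, so the operators $\mathcal{T}_g$ produce just enough cancellation. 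The trivial solutions correspond to $(f_1, f_2)\equiv 0$ since discs centered at the origin are stationary under any angular velocity.

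Next I linearize at the trivial branch. Because both kernels split into a $\log$-part and a Bessel-$K_0$ part, and both are rotationally invariant, the Fourier basis $\{e^{in m \theta}\}_{n\geq 1}$ diagonalizes $\partial_{(f_1,f_2)} F(\Omega, 0, 0)$: on the $n$-th mode the operator is multiplication by a $2\times 2$ matrix $M_{nm}(\Omega)$, whose diagonal entries come from the self-interaction of each patch and whose off-diagonal entries come from the coupling through the Bessel term. A direct computation, using \eqref{K:diff}--\eqref{expan K0} and \eqref{V120} for the contribution of the radial equilibrium stream functions, identifies
\begin{equation*}
\det M_{nm}(\Omega) = (\delta+1)\Omega^{2} + \big(A_{b_1,b_2,nm}+B_{b_1,b_2,nm}\big)\Omega + \tfrac{1}{\delta+1}\big(A_{b_1,b_2,nm} B_{b_1,b_2,nm} - \delta\, \gamma_{b_1,b_2,nm}^{2}\big),
\end{equation*}
up to a normalizing factor, whose roots in $\Omega$ are precisely the $\Omega_{nm}^\pm$ defined above. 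Thus the candidate bifurcation values are $\Omega=\Omega_{m}^\pm$ (taking $n=1$), and the kernel at such an $\Omega$ is one-dimensional in the $(nm)$-th block, spanned by a vector $x_0\in \mathbb{R}^2$ given by any nonzero column of the cofactor of $M_m(\Omega)$, provided no other Fourier mode $nm$, $n\geq 2$, yields $\det M_{nm}(\Omega) = 0$. This is exactly the purpose of the set $S_{m,b_1}$: since Lemma \ref{lemma InKn} implies that $n\mapsto A_{b_1,b_2,n}$ is strictly increasing while $\gamma_{b_1,b_2,n}\to 0$, one checks that $\Omega^+_{nm}>\Omega^+_m>\Omega^-_m$ for all $n\geq 2$; so the only possible collision is between $\Omega_m^-$ and some $\Omega_n^+$ with $n\geq m+1$, and avoiding this is encoded by $b_2\notin S_{m,b_1}$.

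With simplicity of the kernel in hand, the remaining Crandall--Rabinowitz hypotheses follow in the classical way: closedness of the range and one-dimensionality of the cokernel come from the Fourier diagonalization plus compactness of the coupling terms (Riesz--Schauder), while transversality reduces to checking that
\begin{equation*}
\partial_\Omega M_m(\Omega_m^\pm)\, x_0 \notin \operatorname{Range}\, M_m(\Omega_m^\pm),
\end{equation*}
which amounts to showing $\partial_\Omega \det M_m(\Omega)\big|_{\Omega=\Omega_m^\pm}\neq 0$. This is equivalent to $\Omega_m^+\neq \Omega_m^-$, i.e.\ to positivity of the discriminant $(A_{b_1,b_2,m}-B_{b_1,b_2,m})^2 + 4\delta\,\gamma^{2}_{b_1,b_2,m}$; the latter is strict under the standing assumption $\delta\geq b^2$ combined with the positivity part of Lemma \ref{lemma InKn} (the equality $\Omega_m^+=\Omega_m^-$ would force $\gamma_{b_1,b_2,m}=0$ and $A_{b_1,b_2,m}=B_{b_1,b_2,m}$ simultaneously, which is ruled out).

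The finiteness/emptiness of $S_{m,b_1}$ is the key remaining analytic point. The defining equation $\Omega_m^-(b_2,b_1)=\Omega_n^+(b_2,b_1)$ can be rewritten, after clearing the square roots, as a polynomial identity in Bessel products whose coefficients depend real-analytically on $b_2\in(0,b_1]$; so for each fixed $n$ the solution set is either all of $(0,b_1]$ or finite, and a short monotonicity argument for small $b_2$ excludes the first alternative. Combining this with the decay $\gamma_{b_1,b_2,n}\to 0$ and $I_nK_n\to 0$ from Lemma \ref{lemma InKn} (in particular the quantitative bound \eqref{asymptotic:Bessel:1}), one shows that when $b_1\neq b_2$, i.e.\ $b<1$, the factor $\tfrac{b^{mn}}{2mn}$ forces $\gamma_{b_1,b_2,mn}$ to be small and the gap $\Omega_n^+(b_2,b_1)-\Omega_m^-(b_2,b_1)$ to be bounded away from zero uniformly in $n$ once $m\geq m_0$; hence $S_{m,b_1}=\emptyset$ for $m$ large, which is the only point where I expect real work—a careful asymptotic expansion of the Bessel products and a separation of the branches $\Omega_n^+$ and $\Omega_m^-$ must be carried out mode by mode. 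Once this is in place, applying Theorem \ref{Crandall-Rabinowitz theorem} at each $\Omega=\Omega_m^\pm$ produces the two announced curves of $m$-fold V-states and completes the proof.
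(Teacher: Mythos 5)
Your plan follows essentially the same route as the paper: derive a contour dynamics functional, linearize at the trivial branch, diagonalize the linearization by Fourier modes into $2\times 2$ blocks $M_{nm}(\Omega)$, identify the eigenvalues $\Omega_n^\pm$, and invoke Crandall--Rabinowitz. One cosmetic difference: you use the conformal (Burbea-type) parametrization, while the paper parametrizes in polar form $z_k(\theta)=\sqrt{b_k^2+2r_k(\theta)}\,e^{\ii\theta}$; both are standard and lead to the same diagonalized linear operator, so this is a matter of taste.

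There are, however, two substantive slips in the middle of your argument.  First, your collision analysis contradicts itself: you correctly observe that $\Omega_{nm}^+>\Omega_m^+>\Omega_m^-$ for all $n\geq 2$, but then conclude ``the only possible collision is between $\Omega_m^-$ and some $\Omega_n^+$ with $n\geq m+1$''---which is exactly the case your inequality just ruled out (for $n>m$ one has $\Omega_n^+>\Omega_n^->\Omega_m^-$). Going through the four cases $\Omega_m^\pm=\Omega_{nm}^\pm$ with $n\geq 2$, strict monotonicity of both sequences and $\Omega_k^+>\Omega_k^-$ eliminate three of them; the only collision that can genuinely occur is $\Omega_m^+=\Omega_{nm}^-$ for some $n\geq 2$. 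This is precisely what the exceptional set must exclude, and it also shows that $\Omega_m^-$ is always a simple eigenvalue, an observation worth stating. Second, you attribute the hypothesis $\delta\geq b^2$ to the transversality step, but transversality holds unconditionally: the discriminant $(A_m-B_m)^2+4\delta\gamma_m^2$ is strictly positive simply because $\gamma_m>0$ (Lemma~\ref{lemma InKn}, item~2) and $\delta>0$, hence $\Omega_m^+\neq\Omega_m^-$ and $\operatorname{Tr}M_m(\Omega_m^\pm)=\pm(\Omega_m^+-\Omega_m^-)\neq 0$. Where $\delta\geq b^2$ is actually used is in showing $A_\infty-B_\infty>0$ when $b_1\neq b_2$ (see \eqref{A-B} and the computation that follows), which in turn gives $\Omega_\infty^+>\Omega_\infty^-$ and therefore strict separation of the two branches for all sufficiently large modes. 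Your sketch gestures in this direction (``a separation of the branches $\Omega_n^+$ and $\Omega_m^-$ must be carried out mode by mode''), but identifying $A_\infty-B_\infty>0$ and its dependence on $\delta\geq b^2$ is the concrete content that makes the $m\geq m_0$ case work. A minor further point: Lemma~\ref{lemma InKn} gives $(A_n)$ non-increasing (not ``strictly increasing'' as you write), and the correct consequence is monotonicity of $\Omega_n^\pm$ via $-A_n$, $-B_n$ being non-decreasing; the analyticity-in-$b_2$ argument for finiteness of the exceptional set is the same as the paper's.
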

  
  \begin{rem}
  	In the case of identical initial discs, we will show in the proof of Proposition \ref{prop.last} below that  there are   values $b_1=b_2 \in (0,\infty)$ for which spectral collisions can happen, i.e.      situations where the angular velocities satisfy $\Omega_m^-=\Omega_n^+$, for some $n,m\in \mathbb{N}^*$. In that particular case, we are not able to directly apply Crandall-Rabinowitz's Theorem \ref{Crandall-Rabinowitz theorem} to prove existence of time-periodic solutions bifurcating from the same initial disc. This also shows that the set $ S_{m,b_1}$ is not empty in general.
  	  \end{rem}
  
   \begin{rem}\label{RMK:bi}
  	Note that Theorem \ref{Thm:2} covers Theorem \ref{Thm:2-A}. Moreover, Theorem \ref{Thm:2} emphasizes that $\delta$ is allowed to take values in $[b^2,1]$, as well. This condition can probably be further relaxed so that $\delta$ would take values in the whole half-line $(0,\infty)$, though a justification of that is not available   in our proof below. Indeed, it is to be emphasized later on that  the sequences of  angular velocities  $ (\Omega_n^\pm )_{n\in \mathbb{N}^*}$ are non-decreasing for any value of $\delta$ in $(0,\infty)$, which means that spectral collisions cannot occur from the same sequence $ (\Omega_n^+ )_{n\in \mathbb{N}^*}$ or $(\Omega_n^- )_{n\in \mathbb{N}^*}$. Furthermore, under the assumption that $ \delta\geq b^2$ with $b_1 \neq b_2$, it is shown in the proof of Proposition \ref{lem-spec} below that these sequences have different limits at infinity. This allows to avoid spectral collusions between the sequences $ (\Omega_n^+ )_{n\in \mathbb{N}^*}$ and $(\Omega_n^- )_{n\in \mathbb{N}^*}$ for large symmetries. However, because of the implicit expression of the Bessel functions, we were not able to rigorously  justify that fact in the range of parameter $\delta \in (0,b^2)$, even though   several numerical simulations, that we do not include in this paper, suggest that Theorem \ref{Thm:2} would hold for any value of $\delta>0$. 
  \end{rem}

  The proof of Theorem \ref{Thm:2} is carried out in several steps and is detailed in the forthcoming sections. In particular, our strategy consists in   establishing all the prerequisites  in Crandall-Rabinowitz's Theorem \ref{Crandall-Rabinowitz theorem} which, subsequently, will grant us   the results of Theorem \ref{Thm:2}.

    \subsection{Contour dynamics equation}
 Here, we set up the contour dynamics   governing the   motion of   vortex patches.
Onward, we identify $\mathbb{C}$ with $\mathbb{R}^{2},$ where  $\mathbb{C}$ is naturally endowed with the Euclidean scalar product which writes, for $z_{1}=x_{1}+\ii y_{1}$ and $z_{2}=x_{2}+\ii y_{2}$, as
	$$ z_{1}\cdot z_{2}\bydef   \mbox{Re}(\overline{z_{1}}z_{2})=\tfrac{1}{2}\left(\overline{z_{1}}z_{2}+z_{1}\overline{z_{2}}\right)=x_{1}x_{2}+y_{1}y_{2}.$$
	
 Let $D_1$ and $D_2$ be two simply-connected domains,   close to the discs, of radii $b_1$ and $b_2$, respectively. Further consider polar parametrization of the associated boundaries, for $ k\in\{1,2\}$
\begin{equation}\label{def zk}
  z_{k}:\begin{array}[t]{rcl}
		\mathbb{T} & \mapsto & \partial D_{k}\\
		\theta & \mapsto & R_k(\theta)e^{\ii\theta}\bydef   \sqrt{b_k^{2}+2r_k(\theta) } e^{\ii\theta},
	\end{array}
	\end{equation} 
	for some function $r(\theta).$ Accordingly, we introduce  the initial vorticities
$$
(\omega_1,\omega_2)|_{t=0}=\big(\mathds{1}_{D_{1}},\mathds{1}_{D_{2}}\big)
$$
and assume that $\omega_1$ and $\omega_2$ give rise to two rotating patch solutions  of \eqref{EQ} about  the origin, with an angular velocity $\Omega\in \mathbb{R}$.  
 
 The vortex patch equation \eqref{vortex patch equation} provides a system of coupled nonlinear and nonlocal PDE satisfied by the radial deformations $r_1$ and $r_2$. The precise statement is the content of the next lemma.

	\begin{lem}\label{lem boundary eq} 
		The radial deformations $r\bydef (r_1,r_2)$  defined through \eqref{def zk} satisfy the nonlinear coupled system
		\begin{equation}\label{def F}
					\mathcal{F}(\Omega,r)\bydef   \big(\mathcal{F}_1(\Omega,r),\mathcal{F}_2(\Omega,r)\big)=0,  
		\end{equation}
		where we set, for all $k,j\in \{1,2\}$ and $\theta\in \mathbb{T}$, that 
		\begin{equation}\label{Fk:def}
			\begin{aligned}
				\mathcal{F}_k(\Omega,r)(\theta) & \bydef    \Omega \partial_\theta r_k(\theta)+\mathcal{F}_{k,1} ( r)(\theta)+\mathcal{F}_{k,2}( r)(\theta),
			\end{aligned}
		\end{equation}
		and
		\begin{equation}\label{def fkj}
			\mathcal{F}_{k,j}( r)(\theta) \bydef   \int_{0}^{2\pi} G_{k,j}\big(R_k(\theta)e^{\ii \theta}-R_j(\eta) e^{\ii \eta}\big) \partial^2_{\theta \eta}\Big(R_k(\theta) R_j(\eta)\sin(\eta-\theta) \Big)d\eta, 
		\end{equation} 
		where the kernels $G_{k,j}$ are introduced in \eqref{def Gkj}. 
		
		Moreover, the Rankine vorticities, corresponding to  $r\equiv 0$, are stationary solutions, for any radii $b_1,b_2>0$. More precisely, one has  that
		$$
		   \mathcal{F}(\Omega,0)=0,
		$$
		for all  $ \Omega\in \mathbb{R}$.
	\end{lem}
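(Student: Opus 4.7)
The plan is to translate the geometric equation \eqref{vortex patch equation} into a scalar identity for $\psi_k\circ z_k$, replace $\psi_k$ by the representation \eqref{def streamL1}, convert the resulting area integral into a boundary integral via the divergence theorem, and finally parametrize the boundaries to reach \eqref{def F}.

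First, I would rewrite \eqref{vortex patch equation} in scalar form. Identifying $\mathbb{R}^{2}$ with $\mathbb{C}$, the outward normal along $\partial D_k$ at the point $z_k(\theta)$ is colinear with $-i\,\partial_\theta z_k(\theta)$, and $\nabla^\perp\psi_k$ corresponds to $i\,\nabla\psi_k$. Using the real pairing $a\cdot b=\operatorname{Re}(\overline{a}b)$ together with
$\operatorname{Re}(\overline{\partial_\theta z_k}\,z_k)=\tfrac12\partial_\theta|z_k|^{2}=\partial_\theta r_k$,
equation \eqref{vortex patch equation} collapses to the scalar tangential identity
$$\partial_\theta\big(\psi_k(z_k(\theta))\big)=\Omega\,\partial_\theta r_k(\theta).$$
Next, since $\omega_{j,0}=\mathds{1}_{D_j}$, the representation \eqref{def streamL1} reads
$$\psi_k(x)=\sum_{j=1}^{2}\int_{D_j} G_{k,j}(x-\xi)\,dA(\xi).$$
Differentiating in $x$ and using $\nabla_x G_{k,j}(x-\xi)=-\nabla_\xi G_{k,j}(x-\xi)$, the divergence theorem gives
$$\nabla\psi_k\big(z_k(\theta)\big)=-\sum_{j=1}^{2}\int_{\partial D_j} G_{k,j}(z_k(\theta)-\xi)\,\vec{n}_{D_j}(\xi)\,d\ell(\xi).$$
Parametrizing $\partial D_j$ by $\eta\mapsto z_j(\eta)$ and noting that $\vec{n}_{D_j}\,d\ell=-i\,\partial_\eta z_j(\eta)\,d\eta$, the real pairing with $\partial_\theta z_k(\theta)$ yields
$$\partial_\theta z_k\cdot(-i\,\partial_\eta z_j)=-\operatorname{Im}\big(\overline{\partial_\eta z_j}\,\partial_\theta z_k\big)=-\partial^{2}_{\theta\eta}\operatorname{Im}\big(\overline{z_j(\eta)}\,z_k(\theta)\big),$$
and a direct polar computation gives $\operatorname{Im}(\overline{z_j(\eta)}z_k(\theta))=-R_k(\theta)R_j(\eta)\sin(\eta-\theta)$. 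Reassembling these identities and equating to $\Omega\,\partial_\theta r_k$ produces exactly $\mathcal{F}_k(\Omega,r)=0$ as defined in \eqref{Fk:def}--\eqref{def fkj}.

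For the last assertion, when $r\equiv 0$ one has $R_k\equiv b_k$ and $R_j\equiv b_j$, so $\partial^{2}_{\theta\eta}[b_k b_j\sin(\eta-\theta)]=b_k b_j\sin(\eta-\theta)$, while $G_{k,j}(b_k e^{i\theta}-b_j e^{i\eta})$ depends on the pair $(\theta,\eta)$ only through the even quantity $|b_k e^{i\theta}-b_j e^{i\eta}|^{2}=b_k^{2}+b_j^{2}-2b_k b_j\cos(\eta-\theta)$; the change of variables $\eta\mapsto\eta-\theta$ makes the integrand an odd $2\pi$-periodic function of a single variable, whence $\mathcal{F}_{k,j}(0)\equiv 0$. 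The main hurdle is purely one of bookkeeping: tracking the signs arising from the complex identification (orientation of the outward normal, the identity $\nabla^\perp=i\nabla$, and the swap between $\operatorname{Im}(\overline{a}b)$ and $\operatorname{Im}(\overline{b}a)$) and justifying the use of the divergence theorem despite the singularity of $\nabla G_{k,j}$ at the origin; the latter is harmless since, in view of \eqref{K:diff}--\eqref{expan K0}, $\nabla G_{k,j}\in L^{1}_{\mathrm{loc}}(\mathbb{R}^{2})$ and $\partial D_j$ is a smooth perturbation of the circle of radius $b_j$.
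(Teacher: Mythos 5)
Your proposal is correct and follows essentially the same route as the paper: reduce \eqref{vortex patch equation} to the scalar tangential identity $\partial_\theta(\psi_k\circ z_k)=\Omega\,\partial_\theta r_k$, plug in the representation \eqref{def streamL1}, convert the area integral to a boundary integral, and parametrize. The only cosmetic difference is that the paper phrases the area-to-boundary reduction using Wirtinger derivatives $\partial_{\bar z}$ and Gauss--Green in complex notation (the intermediate step being $2\partial_{\bar z}\psi_k(z)=\ii\sum_j\int G_{k,j}(z-R_j e^{\ii\eta})\partial_\eta(R_j e^{\ii\eta})\,d\eta$), while you use the real gradient and the divergence theorem; the two are equivalent, and your sign bookkeeping, the identity $\operatorname{Im}(\overline{z_j(\eta)}z_k(\theta))=-R_k R_j\sin(\eta-\theta)$, and the oddness argument for $\mathcal{F}_{k,j}(0)\equiv 0$ all check out and match the paper's reasoning.
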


\begin{proof}	
Note first that    the normal inward vector to the boundary $\partial D_{k}$  of the patch at the point $z_k(\theta)$ is given by 
$$n_k( z_k(\theta)) =\ii \partial_\theta z_k(\theta) ,$$
for any $k\in \{ 1,2\}$.
 According to \eqref{vortex patch equation}, the initial datum  $\omega_k|_{t=0}=\mathds{1}_{D_{k}}$ generates a rotating patch about the origin with uniform angular velocity  $\Omega\in\mathbb{R}$ if and only if it holds, for all $\theta\in \mathbb{T}$, that
\begin{align}\label{boundary eq in}
 \Omega\,\textnormal{Re}\Big\{ \overline{{z}_k(\theta)}\partial_{\theta}{z}_k(\theta)\Big\}=2\textnormal{Re}\Big\{\partial_{\overline z} \psi_k({z}_k(\theta))\partial_\theta \overline{ {z}_k(\theta)}\Big\}.
\end{align}
Therefore,  observing from \eqref{def zk}   that
\begin{align*}
\Omega\partial_\theta r_k(\theta)= \Omega\,\textnormal{Re}\Big\{ \overline{{z}_k(\theta)}\partial_{\theta}{z}_k(\theta)\Big\},
\end{align*}
entails, for all $k\in \{1,2\}$ and $\theta\in \mathbb{T}$, that
\begin{align}\label{VPS}
  \Omega\partial_\theta r_k(\theta)=2\textnormal{Re}\Big\{\partial_{\overline z} \psi_k({z}_k(\theta))\partial_\theta \overline{ {z}_k(\theta)}\Big\}.
\end{align}
Hence, writing, by the chain rule,  that
\begin{align}\label{boundary eq in2}
2\textnormal{Re}\Big\{\partial_{\overline z} \psi_k({z}_k(\theta))\partial_\theta \overline{ {z}_k(\theta)}\Big\}=\partial_\theta\big( \psi_k(t,{z}_k(\theta))\big),
\end{align}
leads, for all $k\in \{1,2\}$ and $\theta\in \mathbb{T}$, to the equivalence reformulation
\begin{equation*}
	\Omega \partial_\theta r_k(\theta)=\partial_{\theta}\big( \psi(z_k(\theta))\big).
\end{equation*} 
Now, we recall from \eqref{def streamL1} that         \begin{align*}
	\psi_k(z)&=\sum_{j=1}^2\int_{D_j}G_{k,j}(z-\xi)d \xi,
	\end{align*}
for all $z\in \mathbb{C}$, where $G_{k,j}$ is given by \eqref{def Gkj}.
	Observe that the preceding representation can be recast in   polar coordinates \eqref{def zk} as
	\begin{align}\label{def stream polar}
	\psi_k(z)&=\sum_{j=1}^2\int_{0}^{2\pi}\int_0^{R_j(\eta)}G_{k,j}(z-\rho e^{\ii \eta})\rho d\rho d\eta .
		\end{align}
Therefore,  employing the identity
$$\partial_{\overline z} G_{k,j}(z,\xi)=-\partial_{\overline \xi} G_{k,j}(z-\xi),$$ 
yields, for any $z\in \mathbb{C}$, that
	\begin{align*}
	\partial_{\overline z} \psi_k\big(z\big)&=-\sum_{j=1}^2\int_{0}^{2\pi}\int_0^{R_j(\eta)}\partial_{\overline\xi} G_{k,j}\big(z-\rho e^{\ii \eta}\big) \rho  d\rho d\eta.
		\end{align*}
Thus, by further employing  Gauss--Green theorem, we arrive at the representation
	\begin{align}\label{d z psi}
		2\partial_{\overline z} \psi_k\big(z\big)&=\ii\sum_{j=1}^2 \int_{0}^{2\pi}G_{k,1}\big(z-R_j(\eta) e^{\ii \eta}\big)\partial_\eta\big(R_j(\eta)e^{\ii \eta} \big)\big)d\eta,
		\end{align}
whereby we deduce, for any $k\in \{1,2\}$ and $\theta\in \mathbb{T}$, that
	\begin{align*}
&	2\textnormal{Re}\Big\{\partial_{\overline z} \psi_k\big({z}_k(\theta)\big)\partial_\theta\overline{{z}_k(\theta)}\Big\}\\ &\quad \qquad = -\sum_{j=1}^2\int_{0}^{2\pi}G_{k,j}\big(R_k(\theta)e^{\ii \theta}-R_j(\eta) e^{\ii \eta}\big) \textnormal{Im}\Big\{ \partial_\theta\big(R_k(\theta)e^{-\ii \theta} \big)\partial_\eta\big(R_j(\eta)e^{\ii \eta} \big)\Big\}d\eta.
		\end{align*}
		At last, inserting the latter identity above in \eqref{VPS} and employing the fact  that
		\begin{equation*}
			\textnormal{Im}\Big\{ \partial_\theta\big(f(\theta)e^{-\ii \theta} \big)\partial_\eta\big(g(\eta)e^{\ii \eta} \big)\Big\} = \partial^2_{\theta \eta}\Big(f(\theta) g(\eta)\sin(\eta-\theta) \Big),
		\end{equation*}
		for any functions $f,g$, yields the contour dynamics equation.
		
		 To conclude, by a direct computation, one sees that		
		 	\begin{align*}
\mathcal{F}_k(\Omega,0)(\theta)&=b_k  \sum_{j=1}^2 b_j\int_{0}^{2\pi}G_{k,j}\big(b_k e^{\ii \theta}-b_j e^{\ii \eta}\big) \sin(\eta-\theta)  d\eta
\\ &=b_k  \sum_{j=1}^2 b_j\int_{0}^{2\pi}G_{k,j}\big(b_k -b_j e^{\ii (\eta-\theta)}\big) \sin(\eta-\theta)  d\eta
\\ &=b_k \sum_{j=1}^2 b_j\int_{0}^{2\pi}G_{k,j}\big(b_k -b_j e^{\ii \eta}\big) \sin(\eta)  d\eta,\nonumber
		\end{align*}
		for any $\Omega\in \mathbb{R}$ and $\theta\in \mathbb{T}$.
Finally, by virtue of   symmetry properties of the kernels $G_{k,j}$,it is then readily seen that preceding integral  is identically zero, thereby concluding that $(\Omega,0)$ is a zero for the contour dynamics, for any $\Omega\in \mathbb{R}$.
This completes the proof of the lemma.
\end{proof}

\begin{rem}
Let $(\omega_1,\omega_2)|_{t=0}=\big(\mathds{1}_{D_{1}},\mathds{1}_{D_{2}}\big)$ be a rotating patch to \eqref{EQ} with constant angular velocity $\Omega$. Further consider a real number $a>0$ and denote by $D_{k}^a=a D_{k}$, for $k\in\{1,2\}$. Then $(\omega_1,\omega_2)|_{t=0}=\big(\mathds{1}_{D_{1}^a},\mathds{1}_{D_{2}^a}\big)$ is also a rotating patch with angular velocity $\Omega$ to the system \eqref{EQ} with $\lambda$ replaced by $a\lambda$. Indeed, according to \eqref{boundary eq in}, \eqref{boundary eq in2} and \eqref{def streamL1}, one has that 
$$
\Omega\,\textnormal{Re}\Big\{ \overline{{z}_k(\theta)}\partial_{\theta}{z}_k(\theta)\Big\}=\partial_\theta\bigg( \sum_{j=1}^2\int_{D_j}G_{k,j}\big(z_k(\theta)-\xi\big)dA(\xi)\bigg).
$$
Thus, denoting $w_k=a z_k$, then multiplying the preceding equation by $a^2$  and implementing  the change of variables $\xi^\prime=a \xi$ yields that
$$
\Omega\,\textnormal{Re}\Big\{ \overline{{w}_k(\theta)}\partial_{\theta}{w}_k(\theta)\Big\}=\partial_\theta\bigg( \sum_{j=1}^2\int_{D_j^a}G_{k,j}\big(\tfrac1a w_k(\theta)-\tfrac1a\xi^\prime\big)dA(\xi^\prime)\bigg).
$$
Therefore, in view of \eqref{def Gkj}, we find that 
	\begin{align*}
	G_{k,j}\big(\tfrac1a w_k(\theta)-\tfrac1a\xi^\prime\big)&=-\frac{\delta^{2-j}}{2\pi(\delta+1)}\log(a)+\frac{\delta^{2-j}}{2\pi(\delta+1)}\log(|w_k(\theta)-\xi^\prime|)\\ &+(-1)^{k+j-1} \frac{\delta^{k-1}}{2\pi(\delta+1)} K_0\left(\tfrac{\lambda}{a} \sqrt{1+\delta} | w_k(\theta)-\xi^\prime|\right).  
	\end{align*}
Hence, 	we deduce that 
$$
\Omega\,\textnormal{Re}\Big\{ \overline{{w}_k(\theta)}\partial_{\theta}{w}_k(\theta)\Big\}=\partial_\theta\bigg( \sum_{j=1}^2\int_{D_j^a}\widetilde{G}_{k,j}\big( w_k(\theta)-\xi^\prime\big)dA(\xi^\prime)\bigg),
$$
where
$$
\widetilde{G}_{k,j}\big( w-\xi^\prime\big)\bydef \frac{\delta^{2-j}}{2\pi(\delta+1)}\log(|w-\xi^\prime|)+(-1)^{k+j-1} \frac{\delta^{k-1}}{2\pi(\delta+1)} K_0\left(\frac{\lambda}{a} \sqrt{1+\delta} | w-\xi^\prime|\right).
$$
\end{rem}

In the next step, we establish some  specific  symmetry properties of the functional $\mathcal{F}$, introduced in Lemma \ref{lem boundary eq}, above. In particular,  this is crucial and   will allow us to simplify the spectral analysis of the linearized operator associated with $\mathcal{F}$.

More importantly, it is to be emphasized that the property of $m$-fold symmetry  from the next lemma  will be useful to eliminate the instabilities and prevent their appearing at the linear level when we study spectral properties of the linearized operator associated with $\mathcal{F}$, later on.

\begin{lem}\label{lem symmetry}
Let  $\mathcal{F}=  (\mathcal{F}_1,\mathcal{F}_2)$ be given by \eqref{Fk:def}. Then, the following holds
\begin{enumerate}
\item The refection symmetry property: if $r=(r_1,r_2)$ are even, i.e, if   
	\begin{equation}\label{reflex r}
	  r (-\theta)=r (\theta),
	\end{equation}
  for any   $\theta\in \mathbb{T}$,
then $\mathcal{F}(\Omega,r )$ is odd, i.e.
$$
 \mathcal{F}(\Omega,r )(-\theta)=-  \mathcal{F}(\Omega,r )(\theta).  
$$
\item $m$-fold symmetry: if $r $ satisfy, for some $m\in \mathbb{Z}^*$ and all $\theta\in \mathbb{T}$, that
	\begin{equation*} 
  r \left(\theta+\tfrac{2\pi}{m}\right)=r (\theta),
	\end{equation*}
then, it holds that
$$
\mathcal{F}(\Omega,r )\left(\theta+\tfrac{2\pi}{m}\right)=\mathcal{F}(\Omega,r)(\theta),  $$
for all $\theta\in \mathbb{T}$.
\end{enumerate}

\end{lem}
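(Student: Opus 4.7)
The plan is to verify both symmetries by performing the natural change of variables in the defining integrals \eqref{def fkj} and exploiting two facts: the linear term $\Omega\partial_\theta r_k$ is handled by elementary symmetry, and the kernels $G_{k,j}$ from \eqref{def Gkj} are radial, in the sense that $G_{k,j}(z)=G_{k,j}(|z|)$. Note also that $r$ even (resp.\ $m$-fold) immediately implies that $R_k=\sqrt{b_k^2+2r_k}$ inherits the same invariance.

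For the reflection symmetry, I would first observe that $\Omega\partial_\theta r_k$ flips sign under $\theta\mapsto-\theta$ when $r_k$ is even. For the integral $\mathcal{F}_{k,j}(r)(-\theta)$, I perform the change of variable $\eta\mapsto-\eta$ and use $2\pi$-periodicity. The kernel piece becomes $G_{k,j}\bigl(R_k(\theta)e^{-\ii\theta}-R_j(\eta)e^{-\ii\eta}\bigr)$, which equals $G_{k,j}\bigl(R_k(\theta)e^{\ii\theta}-R_j(\eta)e^{\ii\eta}\bigr)$ since the two arguments are complex conjugates of one another and $G_{k,j}$ is radial. For the derivative factor, I set $\Phi(\theta,\eta)\bydef R_k(\theta)R_j(\eta)\sin(\eta-\theta)$ and use that $\Phi(-\theta,-\eta)=-\Phi(\theta,\eta)$, together with the chain rule identity
\begin{equation*}
\bigl(\partial^2_{\theta\eta}\Phi\bigr)(-\theta,-\eta)=\partial^2_{\theta\eta}\bigl[\Phi(-\theta,-\eta)\bigr]=-\partial^2_{\theta\eta}\Phi(\theta,\eta),
\end{equation*}
to conclude that the whole integrand is odd in the pair $(\theta,\eta)$ in the right way, producing $\mathcal{F}_{k,j}(r)(-\theta)=-\mathcal{F}_{k,j}(r)(\theta)$. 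Combining the two contributions gives the claimed odd-parity of $\mathcal{F}$.

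For the $m$-fold property, the linear term is trivially $\frac{2\pi}{m}$-periodic whenever $r$ is. In $\mathcal{F}_{k,j}(r)(\theta+\tfrac{2\pi}{m})$, I substitute $\eta\mapsto\eta+\tfrac{2\pi}{m}$, use periodicity to restore the integration domain, and invoke $R_k(\theta+\tfrac{2\pi}{m})=R_k(\theta)$, $R_j(\eta+\tfrac{2\pi}{m})=R_j(\eta)$. The key algebraic point is the factorization
\begin{equation*}
R_k(\theta)e^{\ii(\theta+\frac{2\pi}{m})}-R_j(\eta)e^{\ii(\eta+\frac{2\pi}{m})}=e^{\ii\frac{2\pi}{m}}\bigl(R_k(\theta)e^{\ii\theta}-R_j(\eta)e^{\ii\eta}\bigr),
\end{equation*}
whose modulus is unaffected by the $e^{\ii\frac{2\pi}{m}}$ factor; radiality of $G_{k,j}$ therefore eliminates the shift. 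Since $\sin\bigl((\eta+\tfrac{2\pi}{m})-(\theta+\tfrac{2\pi}{m})\bigr)=\sin(\eta-\theta)$, the derivative factor is also unchanged, giving exactly $\mathcal{F}_{k,j}(r)(\theta)$.

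Neither step carries any real analytical difficulty; the work is entirely bookkeeping of the changes of variables and the induced transformations of the factor $\partial^2_{\theta\eta}\bigl(R_k(\theta)R_j(\eta)\sin(\eta-\theta)\bigr)$. The mildest pitfall will be keeping track of signs when differentiating $\Phi(-\theta,-\eta)$ in the reflection argument, which is resolved cleanly by the chain-rule identity above.
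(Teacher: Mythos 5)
Your proposal is correct and follows essentially the same route as the paper: the change of variables $\eta\mapsto-\eta$ (resp.\ $\eta\mapsto\eta+\tfrac{2\pi}{m}$) combined with the rotational invariance $G_{k,j}(e^{\ii\alpha}z)=G_{k,j}(z)$ of the kernels. You are merely more explicit than the paper about the bookkeeping for the factor $\partial^2_{\theta\eta}\bigl(R_k(\theta)R_j(\eta)\sin(\eta-\theta)\bigr)$, which the paper leaves implicit.
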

\begin{proof}
 The justification of   reflection symmetry property of $\mathcal{F}$ relies upon the structure of the kernel $G_{k,j}$  defining $ \mathcal{F}_{k,j}$ in \eqref{def fkj}. Accordingly, by   performing the change of variables $\eta\mapsto-\eta$ in \eqref{def fkj}, it is readily seen, for any $k,j\in \{1,2\}$ and $\theta\in \mathbb{T}$, that	
 $$  \mathcal{F}_{k,j}(\Omega,r )(-\theta) =-  \mathcal{F}_{k,j}(\Omega,r )(\theta),$$
 whereby establishing the first claim in the lemma.  Likewise, the $m-$fold symmetry follows by making the change of variables $\eta\mapsto\eta+\tfrac{2\pi}{m}$ in \eqref{def fkj}, which also follows due to the symmetry property of the kernels $G_{k,j}$
 \begin{equation*}
 	G_{k,j}\big(e^{i\alpha}(x  -y) \big) = G_{k,j} (x  -y)  ,
 \end{equation*} 
 for any $x\neq  y\in \mathbb{C}$ and all $\alpha\in \mathbb{T}$. The proof of the lemma is now completed.
\end{proof}

 \subsection{Linearization around discs}\label{section:linearization}		 
 Here, we explore the structure of the linearized operator associated with the functional $\mathcal{F}$ defined in \eqref{def F}. We emphasize that the analysis we perform in this section will be rigorously justified by a detailed regularity study in Section \ref{sec regularity}, later on.

 \begin{lem}\label{lem lin op}
 The G\^ateaux derivative of $\mathcal{F}$ at $r=(r_1, r_2)$ in the direction $h=(h_1,h_2)$ is given  by
 \begin{equation}\label{defLr}
		d_r\mathcal{F}(\Omega,r)[h]= \Omega \begin{pmatrix}
			h'_1 & 0\\
			0 & h'_2
		\end{pmatrix} 
		+ \partial_\theta  \begin{pmatrix}
			  V_{1}(r) h_1- {L}_{1,1}(r)[h_1]& -{L}_{1,2}(r)[h_2] 
			  \vspace{5mm}\\
			- {L}_{2,1}(r)[h_1] &  V_{2}(r) h_2 -{L}_{2,2}(r)[h_2]
		\end{pmatrix}  ,
				\end{equation}
for any $\Omega\in \mathbb{R}$, where we set  
	\begin{align}\label{def Vk}
V_k(r)(\theta)&\bydef   \frac{1}{R_k(\theta)}\sum_{j=1}^{2}\int_{0}^{2\pi}G_{k,j}\big(R_k(\theta)e^{\ii \theta}-R_j(\eta) e^{\ii \eta}\big) \partial_\eta\Big( R_j(\eta)\sin(\eta-\theta) \Big)d\eta,
\\
L_{k,n}(r)[h_n](\theta)&\bydef   \int_{0}^{2\pi}G_{k,n}(R_{k}(\theta) e^{\ii \theta}-R_{n}(\eta) e^{\ii \eta})h_{n}(\eta) d\eta,\label{def Lkn}
	\end{align}  
	for any $\theta\in \mathbb{T}$.
 \end{lem}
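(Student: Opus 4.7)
The plan is to avoid differentiating the double-derivative formula \eqref{def fkj} directly and instead exploit the identity, already derived while proving Lemma \ref{lem boundary eq}, that
\[
\mathcal{F}_k(\Omega,r)(\theta)=\Omega\partial_\theta r_k(\theta)-\partial_\theta\bigl[\psi_k\bigl(z_k(\theta)\bigr)\bigr],
\]
where $\psi_k$ is the stream function built from the domains $D_1,D_2$ via \eqref{def streamL1}. Since $\partial_\theta$ commutes with the G\^ateaux derivative in $r$ (the regularity issues being deferred to Section \ref{sec regularity}), the $\Omega\partial_\theta r_k$ term immediately yields the diagonal contribution $\Omega h_k'$, and the remaining task is to compute $d_r\bigl[\psi_k(z_k(\theta))\bigr][h]$. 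I split this derivative into the two natural contributions: \emph{(a)} the variation of the evaluation point $z_k(\theta)$ through its dependence on $r_k$, and \emph{(b)} the variation of the domains $D_1,D_2$ entering the representation of $\psi_k$.

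For contribution \emph{(b)}, use the polar form \eqref{def stream polar}, apply Leibniz's rule to the variable upper limit $R_j(\eta)$, and use $\partial_{r_j}R_j=1/R_j$. Evaluating at $z=z_k(\theta)$ gives
\[
[d_r\psi_k(z)]\big|_{z=z_k(\theta)}[h]=\sum_{j=1}^2\int_0^{2\pi}h_j(\eta)\,G_{k,j}\bigl(z_k(\theta)-z_j(\eta)\bigr)d\eta=\sum_{j=1}^2 L_{k,j}(r)[h_j](\theta),
\]
which produces the off-diagonal and the $-L_{k,k}$ terms after $-\partial_\theta$ is applied. For contribution \emph{(a)}, from $R_k=\sqrt{b_k^2+2r_k}$ one has $d_r z_k(\theta)[h]=\frac{h_k(\theta)}{R_k(\theta)}e^{\ii\theta}$, so \emph{(a)} equals $\frac{h_k(\theta)}{R_k(\theta)}\,e^{\ii\theta}\cdot\nabla\psi_k\bigl(z_k(\theta)\bigr)$. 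Writing $e^{\ii\theta}\cdot\nabla\psi_k=2\,\textnormal{Re}\bigl(e^{-\ii\theta}\partial_{\bar z}\psi_k\bigr)$ (using that $\psi_k$ is real), substituting the boundary representation \eqref{d z psi}, and noting that
\[
\textnormal{Im}\bigl[e^{-\ii\theta}\partial_\eta(R_j(\eta)e^{\ii\eta})\bigr]=\partial_\eta\bigl(R_j(\eta)\sin(\eta-\theta)\bigr),
\]
one recognizes the result as $-R_k(\theta)\,V_k(r)(\theta)$; this is exactly the same manipulation that was used in the derivation of \eqref{VPS}, now applied to the unknown $\nabla\psi_k$ instead of $\partial_{\bar z}\psi_k\,\partial_\theta\bar z_k$. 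Therefore contribution \emph{(a)} is $-V_k(r)(\theta)\,h_k(\theta)$.

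Assembling the pieces gives $d_r[\psi_k(z_k(\theta))][h]=-V_k(r)h_k+\sum_j L_{k,j}(r)[h_j]$; subtracting $\partial_\theta$ of this and adding $\Omega h_k'$ reproduces the claimed matrix expression row by row (the matrix notation in \eqref{defLr} encoding that the $k$-th component of $d_r\mathcal{F}(\Omega,r)[h]$ is the sum of the entries in the $k$-th row). I expect the main obstacle to be purely bookkeeping: the algebraic identification of contribution \emph{(a)} with $-V_k(r)h_k$, which is simply a repackaging of the $\partial_{\bar z}\psi_k$-to-kernel formula from the proof of Lemma \ref{lem boundary eq}. The analytic legitimacy of differentiating under the singular integral $\mathcal{F}_{k,j}$, and of commuting $d_r$ with $\partial_\theta$, rests on the H\"older regularity bounds of Lemmas \ref{lemma:regularity-operator}--\ref{lemma:regularity-operator2} and is addressed in Section \ref{sec regularity}.
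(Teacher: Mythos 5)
Your proposal is correct and follows essentially the same route as the paper: both reduce the problem to differentiating $\psi_k(z_k(\theta))$ via the chain rule, obtaining the $L_{k,j}$ terms from the polar representation \eqref{def stream polar} (domain variation) and the $-V_k(r)h_k$ term from the variation of the evaluation point $z_k(\theta)$ through \eqref{d z psi}. The only cosmetic difference is that you phrase the point-variation contribution as $e^{\ii\theta}\cdot\nabla\psi_k = 2\,\textnormal{Re}(e^{-\ii\theta}\partial_{\bar z}\psi_k)$ whereas the paper writes it directly as $2\,\textnormal{Re}\{\partial_{\bar z}\psi_k\, d_{r_k}\bar z_k\}$; these are the same computation.
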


 \begin{proof} Let   $\theta \in \mathbb{T} $ and $k\in \{1,2\}$ be fixed. Now, in view of Lemma \ref{lem boundary eq} above, notice that it is sufficient  to differentiate  the stream function. To that end, we first write, by virtue of  \eqref{def stream polar}, that   
 \begin{equation*}
 	d_{r_k} \psi_k(z)[h_k](\theta)=\int_{0}^{2\pi}G_{k,k}(z-R_k(\eta) e^{\ii \eta})h_k(\eta) d\eta
 \end{equation*}
 and
 \begin{equation*}
 	d_{r_{3-k}} \psi_k(z)[h_{3-k}](\theta)=\int_{0}^{2\pi}G_{k,3-k}(z-R_{3-k}(\eta) e^{\ii \eta})h_{3-k}(\eta) d\eta, 
 \end{equation*} 
for any  $z\in \mathbb{C}$. 
On the other hand, by differentiating \eqref{def zk} we infers that 
	$$d_{r_k}\overline{z}_k(\theta)[h_k](\theta)=\frac{h_k(\theta)}{R_k(\theta)}e^{-\ii\theta}.$$
Therefore, due to  \eqref{d z psi}, it follows that  
	\begin{align*}
2\textnormal{Re}\Big\{\partial_{\overline z} \psi_k\big({z}_k(\theta)\big) 	&d_{r_k} \overline{ {z}_k[h_k](\theta)}\Big\}\\ & \qquad=  -\frac{h_k(\theta)}{R_k(\theta)}\sum_{j=1}^2\int_{0}^{2\pi}G_{k,j}\big(R_k(\theta)e^{\ii \theta}-R_j(\eta) e^{\ii \eta}\big) \partial_\eta\, \textnormal{Im}\Big\{ R_j(\eta)e^{\ii (\eta-\theta)} \Big\}d\eta.
		\end{align*}
At last, combining   the foregoing identities yields that
\begin{equation*}
	d_{r_k}\big(\psi_k(z_k )\big)[h_k]=-V_k(r) h_k +L_{k,k}(r)[h_k] 
\end{equation*}
and
\begin{equation*}
d_{r_{3-k}}\big(\psi_k(z_k )\big)[h_{3-k}]=L_{k,3-k}(r)[h_{3-k}] ,
\end{equation*} 
thereby completing the proof of the lemma.
 \end{proof}

Before we move on to a more refined analysis of linearized operator $d_r \mathcal{F}$, allow us first to recall that  $\delta$ and $ \lambda$ are  considered here as fixed non-negative parameters defined by  the system of equations \eqref{EQ}, above. Accordingly, we recall the notations $$\mu\bydef   \lambda\sqrt{1+\delta}, \qquad \text{and} \qquad b \bydef \frac{b_2}{b_1}\in (0,1) , $$
where $b_1$ and $b_2$ refer to the radii of the initial patches. 
At last, we recall that the modified Bessel functions $I_n$ and $ K_n$, for $n\in \mathbb{N}$, are introduced  in Section \ref{section:Bessel} as well as their useful properties  which will come in handy later on in this section.

In the next lemma, we   compute the differential of $\mathcal{F}$ at the equilibrium $ (\Omega,0)$ and show that it acts as a Fourier multiplier.

  \begin{lem}\label{lemma linearized operator at 0}
Given $(h_1,h_2)$ with the Fourier series  expansions
$$ h_{k}(\theta)=\displaystyle\sum_{n\in \mathbb{N}^*}c_{n,k}\cos(n\theta) ,$$ 
for some sequence $(c_{n,k})_{n\in \mathbb{N}^*}\subset \mathbb{R}$, all $k\in \{1,2\}$ and $\theta\in \mathbb{T}$, it then holds that the linearized operator of $\mathcal{F}$  at $r=0$  writes, for any $\theta\in \mathbb{T}$, as
	\begin{equation*} 
	d_r\mathcal{F}(\Omega,0)\begin{pmatrix}
		{h}_{1}\vspace{0.1cm}\\
		{h}_{2}
	\end{pmatrix} (\theta)=-\sum_{n\in \mathbb{N}^*}n M_{n}(\Omega)\begin{pmatrix}
		c_{n,1}\vspace{0.1cm}\\
		c_{n,2}
	\end{pmatrix}\sin(n\theta), 
	\end{equation*}
	where we set
		\begin{equation*} 
	 M_{n}(\Omega)\bydef   \begin{pmatrix}
		\Omega +V_{b_1,b_2}+\tfrac{\tfrac{\delta}{2n}+I_{n}(b_1\mu)K_{n}(b_1\mu)}{\delta+1} & \tfrac{\tfrac{b^{n}}{2n}-I_{n}(\mu b_2)K_{n}(b_1\mu)}{\delta+1} \vspace{5mm}\\
		\tfrac{\delta}{\delta+1}\Big(\tfrac{ b^{n}}{2n}-  I_{n}(b_2\mu  )K_{n}(b_1\mu)\Big)& \Omega+W_{b_1,b_2}+\tfrac{\tfrac{1}{2n}+{\delta} I_{n}(b_2\mu)K_{n}(b_ 2\mu)}{\delta+1} 
	\end{pmatrix},
	\end{equation*}
	for all $n\in \mathbb{N}^*$,  and where $V_{b_1,b_2}$ and $W_{b_1,b_2}$ are defined in \eqref{V120}. 
	 	\end{lem}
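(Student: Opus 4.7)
The plan is to evaluate the general formula \eqref{defLr} at $r=0$, where $R_k\equiv b_k$, and to diagonalize each of the scalar operators $V_k(0)$ (which will turn out to be constants) and $L_{k,j}(0)$ (which will act as Fourier multipliers) in the trigonometric basis. The entire computation rests on two classical expansions that turn the kernel $G_{k,j}$ into an explicit Fourier series once plugged into the angular integrals. Namely, for $0<\rho<R$ one has
\begin{equation*}
\log\bigl|R-\rho\, e^{\ii\eta}\bigr|=\log R-\sum_{n\geq 1}\frac{1}{n}\left(\frac{\rho}{R}\right)^{n}\cos(n\eta),
\end{equation*}
and, by Graf's addition formula for the Macdonald function,
\begin{equation*}
K_{0}\bigl(\mu|R-\rho\, e^{\ii\eta}|\bigr)=I_{0}(\mu\rho)K_{0}(\mu R)+2\sum_{n\geq 1}I_{n}(\mu\rho)K_{n}(\mu R)\cos(n\eta).
\end{equation*}

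First, I would exploit the rotational invariance of $G_{k,j}$ to perform the change of variables $\eta\mapsto\eta+\theta$ in both \eqref{def Vk} and \eqref{def Lkn}, which removes $\theta$ from the arguments of $G_{k,j}$. Thus $V_k(0)$ reduces to a constant,
\begin{equation*}
V_{k}(0)=\frac{1}{b_{k}}\sum_{j=1}^{2}b_{j}\int_{0}^{2\pi}G_{k,j}\bigl(b_{k}-b_{j}\, e^{\ii\eta}\bigr)\cos(\eta)\,d\eta,
\end{equation*}
and, plugging a generic Fourier mode $h_{j}(\theta)=\cos(n\theta)$ into \eqref{def Lkn}, the convolution structure together with the orthogonality of trigonometric polynomials gives
\begin{equation*}
L_{k,j}(0)[\cos(n\cdot)](\theta)=\cos(n\theta)\int_{0}^{2\pi}G_{k,j}\bigl(b_{k}-b_{j}\, e^{\ii\eta}\bigr)\cos(n\eta)\,d\eta,
\end{equation*}
with an analogous identity for $\sin(n\cdot)$; this already exhibits the Fourier-multiplier nature of the linearization. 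The only remaining task is to compute these scalar integrals.

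Inserting the explicit expression \eqref{def Gkj} for $G_{k,j}$ and applying the two expansions above (taking $R=\max(b_{k},b_{j})$, $\rho=\min(b_{k},b_{j})$, and using $b_2\le b_1$), orthogonality collapses each integral to a single term. For $n\geq 1$ this yields, after a bit of bookkeeping,
\begin{equation*}
\int_{0}^{2\pi}G_{k,j}(b_{k}-b_{j}e^{\ii\eta})\cos(n\eta)\,d\eta=-\frac{\delta^{2-j}}{\delta+1}\frac{1}{2n}\Bigl(\tfrac{\min}{\max}\Bigr)^{n}+(-1)^{k+j-1}\frac{\delta^{k-1}}{\delta+1}I_{n}(\mu\,\min)K_{n}(\mu\,\max),
\end{equation*}
and similarly for the $n=1$ integrals defining $V_{k}(0)$. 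Specializing $k=j$ gives the diagonal contributions $\tfrac{\delta}{2n}+I_{n}(b_1\mu)K_{n}(b_1\mu)$ and $\tfrac{1}{2n}+\delta I_{n}(b_2\mu)K_{n}(b_2\mu)$ (up to the factor $\frac{1}{\delta+1}$), while the off-diagonal ones produce $\tfrac{b^{n}}{2n}-I_{n}(b_{2}\mu)K_{n}(b_{1}\mu)$ with the appropriate $\frac{1}{\delta+1}$ or $\frac{\delta}{\delta+1}$ prefactor coming from \eqref{def Gkj}. The $n=1$ terms, specialized to the $\cos\eta$ weight appearing in $V_{k}(0)$, reproduce exactly the constants $V_{b_1,b_2}$ and $W_{b_1,b_2}$ in \eqref{V120}.

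Finally, I would assemble these pieces inside \eqref{defLr} and carry out the outer $\partial_{\theta}$, which sends $\cos(n\theta)\mapsto -n\sin(n\theta)$ and multiplies the whole Fourier coefficient by $-n$; the $\Omega h'_{k}$ terms on the diagonal combine with the diagonal $V_{k}(0)-\bigl(\text{diagonal of }L\bigr)$ to give the diagonal entries of $M_{n}(\Omega)$, while the off-diagonal pieces $-L_{k,3-k}$ yield the stated off-diagonal entries. Superposing linearly in $n$ through the Fourier expansion of $(h_{1},h_{2})$ delivers the claimed formula. The main obstacle is not conceptual but bookkeeping: carefully tracking the $(-1)^{k+j-1}$ signs, the $\delta^{2-j}$ and $\delta^{k-1}$ weights, and the $\min/\max$ ordering between $b_1$ and $b_2$, so that the asymmetry between the two off-diagonal coefficients (one has a $\tfrac{\delta}{\delta+1}$ prefactor, the other $\tfrac{1}{\delta+1}$) emerges correctly.
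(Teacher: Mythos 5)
Your proposal is correct and follows essentially the same path as the paper's proof. Both arguments exploit rotational invariance of $G_{k,j}$ (via the change of variables $\eta\mapsto\eta+\theta$) to reduce $V_k(0)$ to a constant and exhibit $L_{k,j}(0)$ as a Fourier multiplier, and both rest on the same two scalar identities — the Fourier expansion of $\log|R-\rho e^{\ii\eta}|$ and Graf's (Beltrami's) addition formula for $K_0$; the only cosmetic difference is that you write these as explicit Fourier series while the paper records them as the equivalent orthogonality integrals (cf.\ \cite[Lemma A.3]{CCG16} and \cite[page 361]{W95}).
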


\begin{proof}Let    $\theta\in \mathbb{T}$ and $k\in \{1,2\}$ be fixed in the proof and we proceed first by recasting the following identities: for any $n\in \mathbb{N}^*$ and $x,y, \lambda \in (0,\infty)$ with $x\leq y$, that   
		\begin{equation}\label{int log}
			\frac{1}{2\pi}  \int_{0}^{2\pi}\log\left(\big|1-xe^{\ii\theta}\big|\right)\cos(n\theta)d\theta =-\frac{x^{n}}{2n}
		\end{equation}
		and  
\begin{align}\label{int K0}
 \frac{1}{2\pi}\int_{0}^{2\pi}K_{0}\left(\lambda|x-y e^{\ii\theta}|\right)\cos(n\theta)d\theta=I_{n}(\lambda x)K_{n}(\lambda y).
\end{align}
	    The proof of the first identity can be found in \cite[Lemma A.3]{CCG16}, whereas the justification of the second one follows from the Beltrami's summation formula \cite[page 361]{W95}
	    \begin{equation*}
	    	 K_{0}\left(|x-y e^{\ii\theta}|\right) = \sum_{m\in \mathbb{Z}}  I_{m}( x)K_{m}( y) \cos(m\theta).
	    \end{equation*}  
 Now, observe that substituting the value $r=0$ in \eqref{def Vk}   gives
		\begin{align*} 
V_k(0)(\theta) =   \sum_{j=1}^2\frac{b_j}{b_k}\int_{0}^{2\pi}G_{k,j}\big(b_k e^{\ii \theta}-b_j e^{\ii \eta}\big) \cos(\eta-\theta) d\eta .
		\end{align*} 
		On the one hand, in view of  \eqref{def Gkj}, we write that 
			\begin{align*}
\int_{0}^{2\pi}G_{k,k}\big(b_k e^{\ii \theta}-b_k e^{\ii \eta}\big) \cos(\eta-\theta) d\eta&=\frac{\delta^{2-k}}{2\pi(\delta+1)} \int_{0}^{2\pi}\log\big(b_k \big| 1-e^{\ii (\eta-\theta)}\big|\big) \cos(\eta-\theta) d\eta\\ &\quad -\frac{\delta^{k-1}}{2\pi(\delta+1)}\int_{0}^{2\pi}K_0\big(\mu b_k  \big| 1-e^{\ii (\eta-\theta)}\big|\big) \cos(\eta-\theta) d\eta.
		\end{align*}		
		Therefore,   changing  the variable $\eta\mapsto\eta+\theta$ and employing the identities  \eqref{int K0} and \eqref{int log} entails that
		\begin{align*} 
\int_{0}^{2\pi}G_{k,k}\big(b_k e^{\ii \theta}-b_k e^{\ii \eta}\big) \cos(\eta-\theta) d\eta&=-\frac{\delta^{2-k}}{2(\delta+1)}  -\frac{\delta^{k-1}}{\delta+1}I_{1}(b_k\mu)K_{1}(b_k\mu).
		\end{align*} 
		On the other hand, by virtue of \eqref{def Gkj}, writing  
		\begin{align*}
\int_{0}^{2\pi}G_{k,3-k}\big(b_k e^{\ii \theta}-b_{3-k} e^{\ii \eta}\big) &\cos(\eta-\theta) d\eta
\\
&=\frac{\delta^{k-1}}{2\pi(\delta+1)}\int_{0}^{2\pi}\log\big(b_1 \big| 1-b e^{\ii (\eta-\theta)}\big|\big)\cos(\eta-\theta) d\eta\\ &\quad+\frac{\delta^{k-1}}{2\pi(\delta+1)}\int_{0}^{2\pi}K_0\big(\mu  \big| b_k-b_{3-k} e^{\ii (\eta-\theta)}\big|\big) \cos(\eta-\theta) d\eta, 
		\end{align*}
 	it then follows, by the same arguments as before, that 
 	\begin{align*} 
\int_{0}^{2\pi}G_{k,3-k}\big(b_k e^{\ii \theta}-b_{3-k} e^{\ii \eta}\big) \cos(\eta-\theta) d\eta&=-\frac{\delta^{k-1} b}{2(\delta+1)}+\frac{\delta^{k-1}}{\delta+1}  I_1(b_2\mu)K_1(b_1\mu) 		.
\end{align*} 
 Next, according to \eqref{def Lkn}  and  \eqref{def Gkj}, one has that
	\begin{align*}
	L_{k,j}(0)[h_j](\theta)&\bydef   \sum_{n\geq 1}c_{n,j}\int_{0}^{2\pi}G_{k,j}(b_{k} e^{\ii \theta}-b_j e^{\ii \eta})\cos(n\eta) d\eta\\ &=\sum_{n\geq 1}\tfrac{c_{n,j}}{2\pi(\delta+1)}\bigg[\delta^{2-j}\int_{0}^{2\pi}\log\big(\big|b_{k} e^{\ii \theta}-b_j e^{\ii \eta}\big|\big)\cos(n\eta) d\eta\\ &\qquad - (-1)^{k+j} \delta^{k-1} \int_{0}^{2\pi}K_0\big(\mu\big|b_{k} e^{\ii \theta}-b_j e^{\ii \eta}\big|\big)\cos(n\eta) d\eta\bigg],
	\end{align*}
	  for any $j\in \{1,2\}$. Thus, by a change of variables, we find that 
		\begin{align*}
	L_{k,j}(0)[h_j](\theta) &=\sum_{n\geq 1}\frac{c_{n,j}}{2\pi(\delta+1)}\bigg(\delta^{2-j}\int_{0}^{2\pi}\log\big(\big|b_{k} -b_j e^{\ii \eta}\big|\big)\cos(n\eta+n\theta) d\eta\\ &\qquad - (-1)^{k+j} \delta^{k-1} \int_{0}^{2\pi}K_0\big(\mu\big|b_{k} -b_j e^{\ii \eta}\big|\big)\cos(n\eta+n\theta) d\eta\bigg)\\
	&=\sum_{n\geq 1}\frac{c_{n,j}}{2\pi(\delta+1)}\bigg(\delta^{2-j}\cos(n\theta)\int_{0}^{2\pi}\log\big(\big|b_{k} -b_j e^{\ii \eta}\big|\big)\cos(n\eta) d\eta\\ 
	&\qquad- \delta^{2-j}\sin(n\theta)\int_{0}^{2\pi}\log\big(\big|b_{k} -b_j e^{\ii \eta}\big|\big)\sin(n\eta) d\eta\\ &\qquad - (-1)^{k+j} \delta^{k-1}\cos(n\theta) \int_{0}^{2\pi}K_0\big(\mu\big|b_{k} -b_j e^{\ii \eta}\big|\big)\cos(n\eta) d\eta
	\\ &\qquad - (-1)^{k+j} \delta^{k-1} \sin(n\theta)\int_{0}^{2\pi}K_0\big(\mu\big|b_{k} -b_j e^{\ii \eta}\big|\big)\sin(n\eta) d\eta\bigg).
	\end{align*}
Therefore, noting, by symmetry,  that 	
\begin{align*}
\int_{0}^{2\pi}\log\big(\big|b_{k} -b_j e^{\ii \eta}\big|\big)\sin(n\eta) d\eta&=0 
	\end{align*}
	and
	\begin{align*} 
\int_{0}^{2\pi}K_0\big(\mu\big|b_{k} -b_j e^{\ii \eta}\big|\big)\sin(n\eta) d\eta&=0,
	\end{align*}
we conclude, by utilizing   \eqref{int K0} and \eqref{int log}, again, that
		\begin{align*}
L_{k,k}(0)[h_k](\theta)&=-\sum_{n\geq 1}\frac{c_{n,k}}{\delta+1}\left(\frac{\delta^{2-k}}{2n} +\delta^{k-1} I_n(b_k\mu)K_n(b_k\mu)\right)\cos(n\theta),
		\\
L_{k,3-k}(r)[h_{3-k}](\theta)&=\sum_{n\geq 1}\frac{c_{n,3-k}\delta^{k-1} }{\delta+1}\left(-\frac{b^n}{2n} +I_n(b_2\mu)K_n(b_1\mu)\right)\cos(n\theta).		
	\end{align*}
At last, gathering the foregoing identities and plugging them in \eqref{defLr} at the equilibrium  $r=0$ completes the proof of the lemma.	
\end{proof}

 \subsection{Regularity properties}\label{sec regularity}
 Here, we justify the   regularity properties   of the nonlinear functional $\mathcal{F}$ introduced
in \eqref{def F}. In particular, by virtue of the results laid out in this section, all the formal computations in the Section \ref{section:linearization} above will be fully justified.
 
  Let us first set  a few notations to be used afterwards. For $\alpha\in(0,1)$, and $m\in\mathbb{N}^+$, consider the $m$-fold Banach  spaces
  \begin{equation*}
  	 X_m^\alpha \bydef    \Big\{h\in C^{1+\alpha}(\mathbb{T}):\, h(\theta)=\sum_{n\geq 1}c_n\cos(nm\theta),\, c_n\in \mathbb{R},
  \, \theta\in\mathbb{T}\Big\} 
  \end{equation*}
  and
  \begin{equation*}
  	Y_m^\alpha \bydef    \Big\{h\in C^{\alpha}(\mathbb{T}):\, h(\theta)=\sum_{n\geq 1}c_n\sin(nm\theta),\, c_n\in \mathbb{R},
  \, \theta\in\mathbb{T}\Big\}
  \end{equation*} 
  equipped  with their usual norms. Accordingly, we define the product spaces 
$$
\mathcal{X}_m^\alpha\bydef    X_m^\alpha\times X_m^\alpha,\qquad \mathcal{Y}_m^\alpha\bydef   Y_m^\alpha\times Y_m^\alpha,
$$
and, for all $\epsilon\in (0,1)$, we denote by $\mathcal{B} _{m,\epsilon}^\alpha$ the open ball in $\mathcal{X}_m^\alpha$ centered at the origin and with radius $\epsilon$, i.e.,
\begin{align*}
  \mathcal{B} _{m,\epsilon}^\alpha \bydef \big\{r\in \mathcal{X} _m: \lVert r\rVert_{\mathcal{X}_m} < \epsilon \big\}.
\end{align*}

All regularity properties of the functional $\mathcal{F}$ that we need  are now established in the next proposition.

\begin{prop}\label{proposition regularity of the functional}
Let $\lambda>0,$ $b\in(0,1)$, $\alpha\in(0,1)$ and $m\in\mathbb{N}^*.$ Then, there exists $\epsilon >0$ such that  the functional  $\mathcal{F}$ introduced in  \eqref{def F}   is well defined as a mapping
$$\mathcal{F}:\mathbb{R}\times \mathcal{B} _{m,\epsilon}^\alpha\rightarrow \mathcal{Y}_m^\alpha,$$
  and  is of class $C^{1}.$  
Moreover, the partial derivative $$\partial_{\Omega,r}^2\mathcal{F}:\mathbb{R}\times  B_{m,\epsilon}^\alpha\rightarrow\mathcal{L}(\mathcal{X}_m^\alpha,\mathcal{Y}_m^\alpha)$$ exists and is continuous. 
\end{prop}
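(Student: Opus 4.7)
The plan is to reduce the regularity analysis of $\mathcal{F}$ to the singular integral machinery of Lemmas \ref{lemma:regularity-operator}--\ref{lemma:regularity-operator2}. As a first reduction, Lemma \ref{lem symmetry} ensures that for any $r \in \mathcal{X}^\alpha_m$ the output $\mathcal{F}(\Omega,r)$ is automatically odd and $m$-fold symmetric, so the mapping statement $\mathcal{F}(\Omega,r) \in \mathcal{Y}^\alpha_m$ boils down to checking $C^\alpha$-regularity in $\theta$. The transport-type piece $\Omega \partial_\theta r_k$ is plainly a bounded linear operator $\mathcal{X}^\alpha_m \to \mathcal{Y}^\alpha_m$, so the heart of the matter lies with the nonlocal integrals $\mathcal{F}_{k,j}(r)$ from \eqref{def fkj}.

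I would next inspect the kernels $G_{k,j}$ given in \eqref{def Gkj} through the expansion \eqref{expan K0}. A direct computation shows that, near $x=0$, the $\log|x|$ contribution of $G_{k,j}$ carries the coefficient $\frac{\delta^{2-j}+(-1)^{k+j}\delta^{k-1}}{2\pi(\delta+1)}$, which equals $\frac{1}{2\pi}$ in the diagonal case $k=j$ but \emph{vanishes} in the off-diagonal case $k\ne j$. Hence for $k\ne j$ one has $G_{k,j}\in C^{1+\beta}_{\loc}(\mathbb{R}^2)$ for every $\beta\in(0,1)$, and the integral $\mathcal{F}_{k,j}(r)$ is directly seen to lie in $C^\alpha(\mathbb{T})$ with norm continuous in $\|r\|_{\mathcal{X}^\alpha_m}$, independently of whether $b_1=b_2$. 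On the diagonal, the splitting $G_{k,k}(x) = \frac{1}{2\pi}\log|x| + \widetilde G_{k,k}(x)$ with $\widetilde G_{k,k}\in C^{1+\beta}_{\loc}$ isolates a remainder that is treated exactly as above, while the leading log part is precisely the Burbea kernel familiar from the Euler vortex patch problem.

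For this Burbea contribution, I would integrate by parts once in $\eta$ to rewrite $\mathcal{F}_{k,k}(r)(\theta) = -\int_0^{2\pi} \partial_\eta\bigl[G_{k,k}(z_k(\theta)-z_k(\eta))\bigr]\,\partial_\theta\bigl(R_k(\theta) R_k(\eta)\sin(\eta-\theta)\bigr)\,d\eta$, after which the kernel has a $1/|\sin((\theta-\eta)/2)|$ singularity on the diagonal. Splitting the right factor into a pure piece controlled by Lemma \ref{lemma:regularity-operator} and a commutator of the form $R_k(\theta)-R_k(\eta)$ controlled by Lemma \ref{lemma:regularity-operator2} with $g=R_k\in C^{1+\alpha}$ produces the desired bound $\mathcal{F}_{k,k}(r)\in C^\alpha(\mathbb{T})$, with norms continuous in $\|r\|_{\mathcal{X}^\alpha_m}$. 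The $C^1$-regularity is then obtained by verifying that the Gâteaux derivative identified in Lemma \ref{lem lin op} is Fréchet and continuous in $r$: the operators $V_k(r)$ and $L_{k,j}(r)$ of \eqref{def Vk}--\eqref{def Lkn} are built from kernels of the same type as those of $\mathcal{F}_{k,j}$, so the singular integral lemmas apply verbatim to their first derivatives; continuity in $r$ uses smoothness of $G_{k,j}$ away from the origin together with uniform estimates on the singular layer for $r\in\mathcal{B}^\alpha_{m,\epsilon}$. Finally, since $\mathcal{F}$ depends affinely on $\Omega$ through the single term $\Omega(\partial_\theta r_1,\partial_\theta r_2)$, one has $\partial^2_{\Omega,r}\mathcal{F}(\Omega,r)[h] = (\partial_\theta h_1,\partial_\theta h_2)$, a bounded linear map $\mathcal{X}^\alpha_m\to\mathcal{Y}^\alpha_m$ independent of $(\Omega,r)$, whose continuity is immediate.

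The main technical obstacle is the Hölder estimate for the diagonal singular integral, where the $1/|\sin|$-kernel forces a careful commutator-type decomposition and a sharp use of the $C^{1+\alpha}$-regularity of $r$ in order to match the hypotheses of Lemmas \ref{lemma:regularity-operator} and \ref{lemma:regularity-operator2}; the off-diagonal case, by contrast, is essentially free thanks to the cancellation of the two logarithmic singularities observed above.
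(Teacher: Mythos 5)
Your proposal correctly identifies the two structural pillars of the argument: the off-diagonal $\log$-cancellation in $G_{1,2},G_{2,1}$ (your coefficient computation $\frac{\delta^{2-j}+(-1)^{k+j}\delta^{k-1}}{2\pi(\delta+1)}$ is exactly right), and the use of Lemmas~\ref{lemma:regularity-operator}--\ref{lemma:regularity-operator2} for the singular diagonal terms. However, there are two points where your argument either diverges inefficiently from the paper or leaves a genuine gap.

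First, a minor organizational remark: the well-definedness of $\mathcal{F}$ itself does not require the cancellation at all. The paper simply observes $|G_{k,j}|\lesssim|\sin(\tfrac{\theta-\eta}{2})|^{-\alpha}$ and $|\partial_\theta G_{k,j}|\lesssim|\sin(\tfrac{\theta-\eta}{2})|^{-(1+\alpha)}$ (since $|\log t|\lesssim t^{-\alpha}$), so Lemma~\ref{lemma:regularity-operator} handles every $\mathcal{F}_{k,j}(r)$ uniformly. The cancellation is only needed one derivative up, when studying $d_r\mathcal{F}$; there you shift from your benign $\alpha$-budget to a $1/|\sin|$-singularity.

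Second, and more seriously, your decomposition of the diagonal term is incomplete. After integration by parts the ``right factor'' is $\partial_\theta\bigl(R_k(\theta)R_k(\eta)\sin(\eta-\theta)\bigr) = R'_k(\theta)R_k(\eta)\sin(\eta-\theta) - R_k(\theta)R_k(\eta)\cos(\eta-\theta)$; the $\cos$ term does \emph{not} vanish on the diagonal and does not exhibit a commutator of the form $R_k(\theta)-R_k(\eta)$. The commutator structure that makes Lemma~\ref{lemma:regularity-operator2} applicable is not automatic; it is manufactured in the paper by the zero-integral identity
\[
\int_0^{2\pi}\frac{R_k(\theta)e^{\ii\theta}-R_k(\eta)e^{\ii\eta}}{|R_k(\theta)e^{\ii\theta}-R_k(\eta)e^{\ii\eta}|^2}\cdot\partial_\eta\bigl(R_k(\eta)e^{\ii\eta}\bigr)\,d\eta=0,
\]
which lets one insert $\partial_\theta\bigl(R_k(\theta)e^{\ii\theta}\bigr)-\partial_\eta\bigl(R_k(\eta)e^{\ii\eta}\bigr)$ as the $g(\theta)-g(\eta)$ factor (so the correct $g$ is $\mathcal{J}_k=\partial_\theta(R_ke^{\ii\cdot})\in C^\alpha$, not $R_k\in C^{1+\alpha}$). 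Without this identity the singular part is not under control. Relatedly, the claim that ``continuity in $r$ uses smoothness of $G_{k,j}$ away from the origin together with uniform estimates on the singular layer'' does not close: one must actually estimate $\mathcal{H}_k-\widetilde{\mathcal{H}}_k$ and its $\theta$-derivative (the Lipschitz difference bounds $|\mathcal{H}_k-\widetilde{\mathcal{H}}_k|\lesssim\|r-\widetilde r\|_{C^{1+\alpha}}|\sin(\tfrac{\theta-\eta}{2})|^{-1}$ and the analogous $|\sin|^{-2}$ bound) so as to reapply Lemma~\ref{lemma:regularity-operator2} to the difference, a step that is not a corollary of the well-definedness estimates.
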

\begin{proof}

Throughout the proof, $\alpha\in (0,1)$ and   $r \in B_{m,\varepsilon}^\alpha $ are fixed parameters.  In view of \eqref{def F}, notice that   it suffices to establish  the regularity properties for $\mathcal{F}_{k,j}$, for all $j,k\in \{1,2 \}$, where $\mathcal{F}_{k,j}$ are introduced in \eqref{def fkj} and can be recast here as 
 $$\mathcal{F}_{k,j} (r(\theta))  = \ii \partial_\theta\big(R_k(\theta)e^{\ii \theta} \big) \cdot    \int_{0}^{2\pi}G_{k,j}\big(R_k(\theta)e^{\ii \theta}-R_j(\eta) e^{\ii \eta}\big)   \partial_\eta\big(R_j(\eta)e^{\ii \eta} \big) d\eta,$$
 where we recall that
  $$ R_k(\theta) = \sqrt{b_k^{2}+2r_k(\theta) } ,$$
 for all $k\in \{1,2\}$. 
 Thus, it is readily seen that 
 $$ \theta \mapsto  \partial_\theta\big(R_k(\theta)e^{\ii \theta} \big) \in C^\alpha(\mathbb{T}),$$
 as soon as the deformation $\theta \mapsto r(\theta)$ belongs to $C^{1+\alpha}(\mathbb{T})$, which holds by assumption. 
Next, observe, for $ r \in \mathcal{X}_m^{\alpha}$, that 
\begin{equation}\label{bound:Rk}
 b_k \leq  |R_k(\theta)| \leq \sqrt{ b_k^2 + 2 \epsilon}, \quad \text{for all} \quad \theta \in \mathbb{T}.
\end{equation}
Therefore, in view of the identities \eqref{def Gkj} and \eqref{expan K0}, altogether with the fact that 
$$ \big| \log |x| \big|\lesssim |x|^{-\alpha}, $$
for  all $x\in \mathbb{R}^2\setminus \{0\}$ and any $\alpha\in (0,1),$
it is then  readily seen that 
$$ \bigg|G_{ k,j} \big(R_{k}(\theta) e^{\ii \theta}- R_{n}(\eta) e^{\ii \eta}\big)  \bigg|\lesssim \big|R_{k}(\theta) e^{\ii \theta}- R_{n}(\eta) e^{\ii \eta}\big| ^{-\alpha} ,$$
 for any $j,k\in \{ 1,2\}$.
Accordingly, one deduces for $\epsilon $ small enough (see for \cite[inequality (61)]{HXX}) that 
 \begin{equation}\label{G:ES}
 \Big| G_{k,n}\big(R_{k}(\theta) e^{\ii \theta}- R_{n}(\eta) e^{\ii \eta}\big) \Big| \lesssim \Big| \sin \left( \tfrac{\theta-\eta}{2} \right)\Big| ^ {-\alpha},  
\end{equation} 
for all  $\theta\neq \eta \in \mathbb{T}$. 
	Moreover, noticing that 
$$ \Big| \nabla \left(  G_{ k,j}  (|x|) \right)   \Big| \lesssim   | k_{+}(x)| + | k_{-}(x)|,$$
where $k_{\pm}$ are introduced in \eqref{k:pm:def}, 
and employing  \eqref{A1} yields that 
$$ \Big| \nabla \left(  G_{ k,j}  (|x|) \right)   \Big| \lesssim  |x|^{-1}.$$
Thus, in view of \eqref{bound:Rk}, we deduce that 
\begin{equation}\label{DG:ES}
\Big| \partial_\theta\left( G_{k,j}(R_{k}(\theta) e^{\ii \theta}- R_{j}(\eta) e^{\ii \eta}) \right) \Big| \lesssim \Big| \sin \left( \tfrac{\theta-\eta}{2} \right)\Big| ^ {-(1+\alpha)},  
\end{equation}
for all $  \theta\neq \eta \in \mathbb{T} $. 
All in all, with \eqref{G:ES} and \eqref{DG:ES} in hand,    Lemma \ref{lemma:regularity-operator} allows us to deduce that  $$  \theta \mapsto \mathcal{F}_{k,j} (r(\theta)) \in  C^\alpha(\mathbb{T}),$$ 
for any $j,k\in \{1,2\}$.  
In addition to that, the reflection symmetry of $\mathcal{F}_{k}$ is already established in \eqref{reflex r}, whereby we arrive at the conclusion that the mapping 
$$ \mathcal{F}: \mathbb{R}\times B_{m,\varepsilon}^\alpha \rightarrow \mathcal{Y}_m^\alpha $$
is well defined.  
Now, we show that this mapping is $C^1$ with respect to the variable $r$, as its $C^1$ regularity wit respect to variable $\Omega$ clearly holds true as it follows from the observation that 
$$ \partial_\Omega \mathcal{F}(\Omega,r) = r'(\theta).$$
 To that end, allow us first to  recall the expression \eqref{defLr} here for convenience 
  \begin{equation*} 
		d_r\mathcal{F}(\Omega,r)[h]= \Omega \begin{pmatrix}
			h'_1 & 0\\
			0 & h'_2
		\end{pmatrix} 
		+ \partial_\theta  \begin{pmatrix}
			  V_{1}(r) h_1- {L}_{1,1}(r)[h_1]& -{L}_{1,2}(r)[h_2] 
			  \vspace{5mm}\\
			- {L}_{2,1}(r)[h_1] &  V_{2}(r) h_2 -{L}_{2,2}(r)[h_2]
		\end{pmatrix}  ,
				\end{equation*} 
				which holds for any $h=(h_1,h_2)\in \mathcal{X}_m^\alpha$,
				where $V_k$ and $L_{k,n}$ are respectively given by \eqref{def Vk} and \eqref{def Lkn} which can also be   recast   as   
				\begin{equation*}
					V_k(r)(\theta)=  \frac{e^{i\theta}}{R_k(\theta)} \cdot \sum_{j=1}^{2}\int_{0}^{2\pi}G_{k,j}\big(R_k(\theta)e^{\ii \theta}-R_j(\eta) e^{\ii \eta}\big) \partial_\eta\Big( R_j(\eta)e^{i\eta} \Big)d\eta
				\end{equation*}
				and 
				\begin{equation*} 
L_{k,n}(r)[h_n](\theta)=   \int_{0}^{2\pi}G_{k,n}(R_{k}(\theta) e^{\ii \theta}- R_{n}(\eta) e^{\ii \eta})h_{n}(\eta) d\eta .
				\end{equation*} 
				
				Now, we shall prove that $d_r\mathcal{F}(\Omega,r)[\cdot] $ is a well defined linear mapping from $\mathcal{X}_m^\alpha$ into $\mathcal{Y}_m^\alpha$ and we emphasize that we can restrict our focus on   the    regularity property, for the symmetry follows  by similar arguments to the proof of  Lemma \ref{lem symmetry}.  Moreover, observe that the $C^\alpha$ regularity of  $d_r\mathcal{F}(\Omega,r)[h]$, for any $h\in C^{\alpha+1}\times C^{\alpha +1}$, directly follows from the $C^{\alpha+1}$ regularity of $V_k(r)[h]$ and $L_{k,n}(r)[h]$, which we will now prove in details. 
Thus, we now claim that  $$ \theta \mapsto \partial_\theta \int_{0}^{2\pi}G_{k,j}\big(R_k(\theta)e^{\ii \theta}-R_j(\eta) e^{\ii \eta}\big) \partial_\eta\Big( R_j(\eta)e^{i\eta} \Big)d\eta $$
and 
$$  \theta \mapsto  \partial_\theta \int_{0}^{2\pi}G_{k,j}(R_{k}(\theta) e^{\ii \theta}- R_{j}(\eta) e^{\ii \eta})h_{j}(\eta) d\eta$$
belong to $C^\alpha (\mathbb{T})$ and we split the proof of that into two parts:

\subsubsection*{Regularity of the anti-diagonal} This is the case where the kernels $G_{j,k}$ are regular due to a crucial cancellation that only appears  in the elements of the anti-diagonal  of the gradient-matrix. This   cancellation holds to be a consequence of the specific  combination of the kernels associated with $\Delta^{-1}$ and $(\Delta - \lambda^2 (1+\delta) \id )^{-1} $ through the coupling in     \eqref{EQ}.  

In order  to observe this phenomenon,  we first emphasize that  the   expression of $G_{j,k}$ in \eqref{def Gkj}, for $j\neq k,$ can be  recast as
$$G_{1,2} (z) = \frac{1}{2\pi(1+\delta)}  \Big( \log(|z|) +  K_0(\mu |z|) \Big)$$
and
$$G_{2,1} (z) = \frac{\delta}{2\pi(1+\delta)}  \Big( \log(|z|) +  K_0(\mu |z|) \Big).$$ 
Therefore, we expand $K_0$ by utilizing   \eqref{In:def} to find, after performing minor simplifications, that 
\begin{equation}\label{K0:expansion}
	K_0(\mu |z|)  = -  \log (|z|)    -  \left( \underbrace{\log \left( \frac{\mu}{2}\right) I_0(\mu |z|) + \log \left( \frac{|z|}{2}\right) \sum_{ m=1}^\infty \frac{\left( \mu \frac{ | z|}{2}\right)^{2m}}{m! \Gamma ( m + 1)}}_{\bydef Q(|z|)} \right) .
\end{equation}
Thus, we deduce that  
$$G_{1,2} (z) = \frac{1}{2\pi(1+\delta)}  Q(|z|)   $$
$$G_{2,1} (z) = \frac{\delta}{2\pi(1+\delta)}   Q(|z|) .$$ 
The crucial observation here is that $Q$ is more regular than the previous kernels, $\log  $ and $K_0$, themselves.  
More precisely, we emphasize that one can show by a direct computations that 
$$ Q(|\cdot|) \in C^{1+\alpha}_{\loc}(\mathbb{R}^+).   $$
Therefore, it is readily seen that 
$$ \theta \mapsto \partial_\theta \int_{0}^{2\pi}G_{k,j}\big(R_k(\theta)e^{\ii \theta}-R_j(\eta) e^{\ii \eta}\big) \partial_\eta\Big( R_j(\eta)e^{i\eta} \Big)d\eta $$
and 
$$  \theta \mapsto  \partial_\theta \int_{0}^{2\pi}G_{k,j}(R_{k}(\theta) e^{\ii \theta}- R_{j}(\eta) e^{\ii \eta})h_{j}(\eta) d\eta$$
belong to $C^\alpha (\mathbb{T}),$ for any $i\neq j \in \{ 1,2\}$, as soon as $r\in C^{1+\alpha}(\mathbb{T})$, which holds by assumption.
\subsubsection*{Regularity of the diagonal}

Owing again to  \eqref{def Gkj} and \eqref{K0:expansion}, we write that 
$$G_{1,1} (z) = \frac{1}{2\pi(1+\delta)}  \Big( (\delta+1)\log(|z|) -  Q(|z|) \Big)$$
and
$$G_{2,2} (z) = \frac{1}{2\pi(1+\delta)}  \Big( (\delta+1)\log(|z|) -  \delta Q(|z|) \Big),$$ 
where the function $Q$ is defined in \eqref{K0:expansion} and belongs to $C^{1+\alpha}(\mathbb{T})$ as it is emphasized above. Accordingly, in order to show the regularity of the diagonal elements, i.e. the fact that 
$$ \theta \mapsto \partial_\theta \int_{0}^{2\pi}G_{k,k}\big(R_k(\theta)e^{\ii \theta}-R_k(\eta) e^{\ii \eta}\big) \partial_\eta\Big( R_k(\eta)e^{i\eta} \Big)d\eta $$
and 
$$  \theta \mapsto  \partial_\theta \int_{0}^{2\pi}G_{k,k}(R_{k}(\theta) e^{\ii \theta}- R_{k}(\eta) e^{\ii \eta})h_{k}(\eta) d\eta$$
belong to $C^\alpha (\mathbb{T}),$ for any $k \in \{ 1,2\}$, it only remains to prove that 
$$ \theta \mapsto \partial_\theta \int_{0}^{2\pi} \log \Big|R_k(\theta)e^{\ii \theta}-R_k(\eta) e^{\ii \eta}\Big| \partial_\eta\Big( R_k(\eta)e^{i\eta} \Big)d\eta $$
and 
$$  \theta \mapsto  \partial_\theta \int_{0}^{2\pi}\log \Big|R_{k}(\theta) e^{\ii \theta}- R_{k}(\eta) e^{\ii \eta}  \Big| h_{k}(\eta) d\eta$$
belong to $C^\alpha (\mathbb{T})$. 
Note that this is equivalent to showing that the functions 
$$ \theta \mapsto \mathcal{U}(\theta)\bydef  \int_{0}^{2\pi} \frac{R_k(\theta)e^{\ii \theta}-R_k(\eta) e^{\ii \eta}}{\Big| R_k(\theta)e^{\ii \theta}-R_k(\eta) e^{\ii \eta}\Big|^2}   \cdot\partial_\theta \Big( R_k(\theta)e^{\ii \theta}\Big) \partial_\eta\Big( R_k(\eta)e^{i\eta} \Big)d\eta $$
and 
$$  \theta \mapsto \mathcal{W}(\theta)\bydef    \int_{0}^{2\pi}\frac{R_k(\theta)e^{\ii \theta}-R_k(\eta) e^{\ii \eta}}{\Big| R_k(\theta)e^{\ii \theta}-R_k(\eta) e^{\ii \eta}\Big|^2}   \cdot\partial_\theta \Big( R_k(\theta)e^{\ii \theta}\Big)  h_{k}(\eta) d\eta$$

 enjoy the $C^\alpha$ regularity.  
 The achievement of the preceding claims relies on the observation that
\begin{equation}\label{identity:zero}
	\int_{0}^{2\pi}\frac{R_k(\theta)e^{\ii \theta}-R_k(\eta) e^{\ii \eta}}{\Big| R_k(\theta)e^{\ii \theta}-R_k(\eta) e^{\ii \eta}\Big|^2}   \cdot\partial_\eta \Big( R_k(\eta)e^{\ii \eta}\Big)   d\eta = 0,
\end{equation}
for any $k \in \{1,2\}$, which, in particular, allows us to write, for all $\theta \in \mathbb{T}$, that
\begin{equation}\label{U:expansion}
	\begin{aligned}
		\mathcal{U}(\theta) 
		&=  \partial_\theta \Big( R_k(\theta)e^{\ii \theta}\Big)  \cdot \int_{0}^{2\pi} \frac{R_k(\theta)e^{\ii \theta}-R_k(\eta) e^{\ii \eta}}{\Big| R_k(\theta)e^{\ii \theta}-R_k(\eta) e^{\ii \eta}\Big|^2}   \Bigg(\partial_\eta\Big( R_k(\eta)e^{i\eta} \Big) - \partial_\theta \Big( R_k(\theta)e^{\ii \theta}\Big)  \Bigg)d\eta  
		\\
		 & \quad + \partial_\theta \Big( R_k(\theta)e^{\ii \theta}\Big)  \int_{0}^{2\pi} \frac{R_k(\theta)e^{\ii \theta}-R_k(\eta) e^{\ii \eta}}{\Big| R_k(\theta)e^{\ii \theta}-R_k(\eta) e^{\ii \eta}\Big|^2} \cdot  \Bigg( \partial_\theta \Big( R_k(\theta)e^{\ii \theta}\Big)  -\partial_\eta\Big( R_k(\eta)e^{i\eta} \Big) \Bigg)d\eta  
	\end{aligned}
	\end{equation}
	and that 
	\begin{equation*}
		\begin{aligned}
		\mathcal{W}(\theta) 
			&= \partial_\theta \Big( R_k(\theta)e^{\ii \theta}\Big) \cdot \int_{0}^{2\pi}\frac{R_k(\theta)e^{\ii \theta}-R_j(\eta) e^{\ii \eta}}{\Big| R_k(\theta)e^{\ii \theta}-R_k(\eta) e^{\ii \eta}\Big|^2}   \Big(    h_{k}(\eta) - h_k(\theta) \Big) d\eta
			\\
			& \quad +  h_k(\theta) \int_{0}^{2\pi} \frac{R_k(\theta)e^{\ii \theta}-R_k(\eta) e^{\ii \eta}}{\Big| R_k(\theta)e^{\ii \theta}-R_k(\eta) e^{\ii \eta}\Big|^2} \cdot  \Bigg( \partial_\theta \Big( R_k(\theta)e^{\ii \theta}\Big)  -\partial_\eta\Big( R_k(\eta)e^{i\eta} \Big) \Bigg)d\eta .
		\end{aligned}
	\end{equation*} 
	On the other hand, due to the $C^{1+\alpha}$ regularity of $h_j$ and $r$, one has that 
	\begin{equation*}
		\Bigg| \partial_\theta \Big( R_k(\theta)e^{\ii \theta}\Big)  -\partial_\eta\Big( R_k(\eta)e^{i\eta} \Big) \Bigg| \lesssim \big|\theta - \eta \big|^\alpha
	\end{equation*}
	and that 
	\begin{equation*}
		\Big| h_k(\eta) - h_k(\theta) \Big|\lesssim \big|\theta - \eta \big|^\alpha,
	\end{equation*}
	for any $\theta,\eta \in \mathbb{T}$.  Therefore, combining the preceding bounds with  
	\begin{equation}\label{Bound:below}
		\left| \frac{R_k(\theta)e^{\ii \theta}-R_k(\eta) e^{\ii \eta}}{\Big| R_k(\theta)e^{\ii \theta}-R_k(\eta) e^{\ii \eta}\Big|^2}\right| \lesssim \frac{1}{\Big| R_k(\theta)e^{\ii \theta}-R_k(\eta) e^{\ii \eta}\Big| }  \lesssim  \Big| \sin \left( \tfrac{\theta-\eta}{2} \right)\Big| ^ {-1}
	\end{equation}
	and 
	\begin{equation*}
		\left|  \partial_ \theta \left(\frac{R_k(\theta)e^{\ii \theta}-R_k(\eta) e^{\ii \eta}}{\Big| R_k(\theta)e^{\ii \theta}-R_k(\eta) e^{\ii \eta}\Big|^2}\right)\right| \lesssim \Big| \sin \left( \tfrac{\theta-\eta}{2} \right)\Big|^{-2},
	\end{equation*}
	which can be proved by means of a straitforward computations,  
	 the desired regularity of $\mathcal{U}$ and $\mathcal{W}$ follows then by a direct application of Lemma \ref{lemma:regularity-operator2}.  
	This completes the proof of the fact that the operator  $ d_r \mathcal{F}[\cdot]$ is well defined  from $\mathbb{R}\times B_{m,\epsilon}^\alpha$ to $C^{\alpha}(\mathbb{T})$.
	
	As for its  continuity with respect to the $r$ variable, we emphasize that we can focus on  establishing  that for the functions  
	$$ \theta \mapsto \partial_\theta \int_{0}^{2\pi}G_{k,k}\big(R_k(\theta)e^{\ii \theta}-R_k(\eta) e^{\ii \eta}\big) \partial_\eta\Big( R_k(\eta)e^{i\eta} \Big)d\eta $$
and 
$$  \theta \mapsto  \partial_\theta \int_{0}^{2\pi}G_{k,k}(R_{k}(\theta) e^{\ii \theta}- R_{k}(\eta) e^{\ii \eta})h_{k}(\eta) d\eta,$$
as the treatment of the remaining terms in the expression of $ d_r\mathcal{F}[h]$ is straitforward. 
Again, this reduces to proving the continuity, with respect to $r$, of   the functions $\mathcal{U}$ and $\mathcal{W}$ defined above.  

For simplicity, we only outline here the proof of that for $\mathcal{U}$, for the same arguments apply for $\mathcal{W}$, as well.    
To that end, we  consider two deformations $r=(r_1,r_2)$ and $\widetilde{r}= (\widetilde{r}_1, \widetilde{r}_2) $ and we  are going to use the notation  $\widetilde{f}$, for any given functional $f$, to precise its dependence   on $\widetilde{r}$ instead of $r$. Moreover, let us introduce the functions
\begin{equation*}
	M(\theta) \bydef R_k(\theta)e^{\ii \theta}, \qquad \mathcal{J}_{k}(\theta )\bydef M'(\theta), 
\end{equation*}
\begin{equation*}
	\mathcal{H}_{k }(\theta,\eta)\bydef \frac{M(\theta)-M(\eta)}{\Big| M(\theta)-M(\eta)\Big|^2},
\end{equation*} 
and we recall that 
$$R_k= \sqrt{b_k^2 + 2 r_k}.$$ 
In view of these notations,  we  recast \eqref{identity:zero}  as 
\begin{equation*}
	\int_0^{2\pi} \mathcal{H}_{k }(\theta,\eta) \cdot \mathcal{J}_{k}(\eta) d\eta =0,
\end{equation*}
for all $\theta\in \mathbb{T}$.
Therefore, we write, in view of \eqref{U:expansion}, that 
\begin{equation*} 
	\begin{aligned}
		\mathcal{U}(\theta) - \mathcal{\widetilde{U}}(\theta) 
		&= \mathcal{J}_{k}(\theta ) \cdot \int_{0}^{2\pi}  \mathcal{H}_{k }(\theta,\eta)   \Big(\mathcal{J}_{k}(\eta ) - \mathcal{J}_{k}(\theta )  \Big)d\eta
		\\  
		 & \quad +\mathcal{J}_{k}(\theta )  \int_{0}^{2\pi}  \mathcal{H}_{k }(\theta,\eta) \cdot  \Big(\mathcal{J}_{k}(\theta )  -\mathcal{J}_{k}(\eta ) \Big)d\eta   
		 \\
		 &\quad - \mathcal{\widetilde{J}}_{k}(\theta ) \cdot \int_{0}^{2\pi}  \mathcal{\widetilde{H}}_{k }(\theta,\eta)   \Big(\mathcal{\widetilde{J}}_{k}(\eta ) - \mathcal{\widetilde{J}}_{k}(\theta )  \Big)d\eta  
		 \\
	    &\quad - \mathcal{\widetilde{J}}_{k}(\theta )  \int_{0}^{2\pi}  \mathcal{\widetilde{H}}_{k }(\theta,\eta) \cdot  \Big(\mathcal{\widetilde{J}}_{k}(\theta )  -\mathcal{\widetilde{J}}_{k}(\eta ) \Big)d\eta 
	  \\
	   &= \big( \mathcal{J}_{k}(\theta ) - \mathcal{\widetilde{J}}_{k}(\theta )  \big) \cdot \int_{0}^{2\pi}  \mathcal{H}_{k }(\theta,\eta)   \Big(\mathcal{J}_{k}(\eta ) - \mathcal{J}_{k}(\theta )  \Big)d\eta
	   \\
	     &\quad +  \mathcal{\widetilde{J}}_{k}(\theta ) \cdot \int_{0}^{2\pi} \big(  \mathcal{H}_{k }(\theta,\eta) -  \mathcal{\widetilde{H}}_{k }(\theta,\eta)  \big)    \Big(\mathcal{J}_{k}(\eta ) - \mathcal{J}_{k}(\theta )  \Big)d\eta  
	     \\
	     &\quad +  \mathcal{\widetilde{J}}_{k}(\theta ) \cdot \int_{0}^{2\pi}   \mathcal{\widetilde{H}}_{k }(\theta,\eta)      \Big(\big(\mathcal{J}_{k}-\mathcal{\widetilde{J}}_{k}\big) (\eta ) - \big(\mathcal{J}_{k}-\mathcal{\widetilde{J}}_{k}\big)(\theta )  \Big)d\eta  
	     \\ 
	     & \bydef \sum_{j=1}^3 \mathcal{A}_j (\theta).
	\end{aligned}
	\end{equation*}
Thus, by virtue of the same argument laid out in the regularity study of   $\mathcal{U}$ in the previous step of the proof, we observe that a direct application of Lemma  \ref{lemma:regularity-operator2} leads to the bound 
\begin{equation*}
	\norm { \mathcal{A}_1}_{C^{\alpha}} + \norm { \mathcal{A}_3}_{C^{\alpha}} \lesssim \|\mathcal{J}_k-\mathcal{\widetilde{J}}_{k}\|_{C^{\alpha}} \lesssim \norm{ r_k - \widetilde{r}_k}_{C^{1+\alpha}}.
\end{equation*} 
As for the estimate of $A_2$, in order to apply Lemma \ref{lemma:regularity-operator2},  we first need to show that
\begin{equation}\label{H:BOUND:1}
	\big| \big( \mathcal{H}_{k }- \mathcal{\widetilde{H}}_{k } \big) (\theta,\eta)\big| \lesssim \norm { r-\widetilde{r}}_{C^{1+\alpha}} \Big| \sin \left( \tfrac{\theta-\eta}{2} \right)\Big|^{-1}
\end{equation}
and 
\begin{equation}\label{H:BOUND:2}
	\big|\partial_\theta \big( \mathcal{H}_{k }- \mathcal{\widetilde{H}}_{k } \big) (\theta,\eta)\big| \lesssim \norm { r-\widetilde{r}}_{C^{1+\alpha}} \Big| \sin \left( \tfrac{\theta-\eta}{2} \right)\Big|^{-2},
\end{equation}
for all $\theta \neq \eta \in \mathbb{T}$.  To that end, we only need to write, by a direct computation, that 
\begin{equation*}
	\begin{aligned}
	\big| 	 \mathcal{H}_{k }(\theta,\eta)  & -  \mathcal{\widetilde{H}}_{k }(\theta,\eta) \big|   
	\\
	 & \lesssim  \big|  \big(M-\widetilde{M}\big)(\theta)- \big(M-\widetilde{M}\big)(\eta)  
\big| \left(  \frac{1}{ \Big| {M}(\theta) - {M}(\eta) \Big| }+ \frac{1}{ \Big| \widetilde{M}(\theta) - \widetilde{M}(\eta) \Big| }\right) ^2.
			\end{aligned}
\end{equation*} 
Therefore, by using   the Lipschitz property of the functions $M$ and $\widetilde M$ combined with \eqref{Bound:below}, one deduces   that 
\begin{equation*}
	\begin{aligned}
	\big|\mathcal{H}_{k }(\theta,\eta)    -  \mathcal{\widetilde{H}}_{k }(\theta,\eta) \big|   
	& \lesssim \|M-\widetilde{M}\|_{C^1} \frac{|\theta-\eta|}{ \Big| \sin \left( \frac{\theta-\eta}{2} \right)\Big|^2} 
	\\
	& \lesssim  \norm { r-\widetilde{r}}_{C^{1+\alpha}} \Big| \sin \left( \frac{\theta-\eta}{2} \right)\Big|^{-1},
			\end{aligned}
\end{equation*}
whereby showing \eqref{H:BOUND:1}. The justification of \eqref{H:BOUND:2} can be done along the same lines, whence we skip its proof here. This completes the proof of the proposition.
   \end{proof}

  \subsection{Spectral analysis of the linearized operator} This section is devoted, first, to perform a refine analysis of the eigenvalues associated with the matrix $M_n(\Omega)$, introduced in in  Lemma \ref{lemma linearized operator at 0} and defining the linearized operator $d_r\mathcal{F}(\Omega,0)$, and, second, to establish the remaining prerequisite properties of that operator before we apply Crandall-Rabinowitz’s theorem.

  \begin{prop}\label{lem-spec} 
  For a given $n\in \mathbb{N}^*,$ the matrix $M_n(\Omega)$, introduced in  Lemma \ref{lemma linearized operator at 0} above, is not invertible if and only if $\Omega = \Omega_n^{\pm},$
where 
\begin{equation}\label{Omeg+-}
\Omega_n^{\pm}\bydef \frac{1}{2(\delta + 1)} \left(  - (A_n+B_n)\pm  \sqrt{(A_n-B_n)^2 +{4\delta}\left(\frac{b^{n}}{2n}-I_{n}(b_2\mu )K_{n}(b_1\mu)\right)^2} \right),
\end{equation}
where, we set 
\begin{equation}\label{def AnBn}
     \begin{aligned}
	A_{n}&\bydef   (\delta+1)V_{b_1,b_2}+\frac{\delta}{2n}+I_{n}(b_1\mu)K_{n}(b_1\mu) 
	\\
	B_n&\bydef   (\delta+1)W_{b_1,b_2}+\frac{1}{2n}+{\delta} I_{n}(b_2\mu)K_{n}(b_2\mu),
\end{aligned}
\end{equation} 
  and we recall that $V_{b_1,b_2}$ and $W_{b_1,b_2}$ are  defined in \eqref{V120}, above.  
  Moreover, for any $ 0<b_2\leq b_1$ and $\delta>0$, the sequences $(\Omega_n^\pm)_{n\in \mathbb{N}}$ are strictly increasing. In addition to that, there is $p_0\in \mathbb{N}^*$ such that, for all $m,n\geq  p_0$, any $0<b_2<b_1$ and   $\delta\geq \big (\frac{b_2}{b_1}\big)^2$, it holds that 
  $$ \Omega_n^+ > \Omega_m^-.$$

    Furthermore, regarding $\Omega_{n}^\pm$ as functions of $b_2\in (0,b_1)$, for fixed $b_1\in (0,\infty)$ and $n\in \mathbb{N}^*$, and introducing the set 
    $$ {S}_{m,b_1}\bydef    \left\{ b_2\in (0,b_1): \exists n\in\mathbb{N}^*,\ \Omega_{m}^-(b_2) =   \Omega_n^+(b_2) \right\},$$
    it then holds, for any $m\in \mathbb{N}^*$, that the set ${S}_{m,b_1}$ contains at most a finite number of elements.
    
    At last, in the particular case $b_1=b_2$, there exists  $b_1 \in (0,\infty)$  for which one has that
     \begin{equation*}
  	\Omega_n^+=  \Omega_m^-, \quad \text {for some} \quad n,m\in \mathbb{N}^*.
  \end{equation*}     
    \end{prop}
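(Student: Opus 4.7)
The proof proceeds in five logically distinct but related steps, which I would organize into three main blocks.

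\textbf{Eigenvalue formula and monotonicity.} A direct computation of $\det M_{n}(\Omega)$ yields a quadratic in $\Omega$ whose roots are precisely $\Omega_{n}^{\pm}$ as in \eqref{Omeg+-}. For the monotonicity claim, I would first invoke Lemma \ref{lemma InKn} to record that, for every $0<b_{2}\le b_{1}$ and $\delta>0$, the three sequences $(A_{n})_{n}$, $(B_{n})_{n}$ and $(\gamma_{b_{1},b_{2},n})_{n}$ are strictly decreasing and that $\gamma_{b_{1},b_{2},n}>0$. Setting $Q_{n}(y)=(y+A_{n})(y+B_{n})-\delta\gamma_{n}^{2}$ and writing $y_{n}^{\pm}=(\delta+1)\Omega_{n}^{\pm}$, the easier bound $\Omega_{n}^{-}<\Omega_{n+1}^{-}$ follows directly: since $y_{n}^{-}<-\max(A_{n},B_{n})$, both factors $(y_{n}^{-}+A_{n+1})$ and $(y_{n}^{-}+B_{n+1})$ are strictly more negative than the corresponding factors at index $n$, so $Q_{n+1}(y_{n}^{-})>Q_{n}(y_{n}^{-})+\delta(\gamma_{n}^{2}-\gamma_{n+1}^{2})>0$, placing $y_{n}^{-}$ strictly below both roots of $Q_{n+1}$. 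For $\Omega_{n}^{+}$ the analogous direct expansion has terms of mixed sign; here I would conjugate the constant part of $M_{n}(\Omega)$ by $P=\mathrm{diag}(1,1/\sqrt{\delta})$ to produce the symmetric matrix $S_{n}=\frac{1}{\delta+1}\bigl(\begin{smallmatrix}A_{n}&\sqrt{\delta}\gamma_{n}\\\sqrt{\delta}\gamma_{n}&B_{n}\end{smallmatrix}\bigr)$, whose eigenvalues are $-\Omega_{n}^{\mp}$. The variational characterization $-\Omega_{n}^{+}=\lambda_{\min}(S_{n})$ reduces the monotonicity to proving that $R_{n}\bydef (\delta+1)(S_{n}-S_{n+1})$ is positive semi-definite, which amounts to the scalar inequality $(A_{n}-A_{n+1})(B_{n}-B_{n+1})\ge\delta(\gamma_{n}-\gamma_{n+1})^{2}$. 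This PSD condition can be established by expanding every difference through the Beltrami–Lebedev integral representation used in Lemma \ref{lemma InKn}, then applying a Cauchy–Schwarz-type argument, exploiting the non-negativity of $1-J_{0}$.

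\textbf{Limits at infinity, spectral gap and finiteness of $S_{m,b_{1}}$.} Since $I_{n}K_{n}\to 0$, one has $A_{n}/(\delta+1)\to V_{b_{1},b_{2}}$, $B_{n}/(\delta+1)\to W_{b_{1},b_{2}}$ and $\gamma_{b_{1},b_{2},n}\to 0$, so $\Omega_{n}^{+}\to L^{+}\bydef -\min(V_{b_{1},b_{2}},W_{b_{1},b_{2}})$ and $\Omega_{n}^{-}\to L^{-}\bydef -\max(V_{b_{1},b_{2}},W_{b_{1},b_{2}})$. Under the hypotheses $b_{1}\neq b_{2}$ and $\delta\ge b^{2}$, a careful algebraic manipulation of \eqref{V120} (isolating the polynomial term $\frac{1-b^{2}}{2(1+\delta)}$ and showing the Bessel remainder cannot cancel it under $\delta\ge b^{2}$) yields $V_{b_{1},b_{2}}\neq W_{b_{1},b_{2}}$, hence $L^{+}>L^{-}$. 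Combining this with the strict monotonicity of the previous step, choosing $p_{0}$ so that $\Omega_{p_{0}}^{+}\ge L^{-}$ forces $\Omega_{n}^{+}>\Omega_{m}^{-}$ for all $m,n\ge p_{0}$. The finiteness of $S_{m,b_{1}}$ follows from real-analyticity: for each fixed $n$ and $m$, the map $b_{2}\mapsto \Omega_{n}^{+}(b_{2})-\Omega_{m}^{-}(b_{2})$ extends analytically to $(0,b_{1}]$ (the discriminant remains strictly positive), so either vanishes identically, which one excludes by evaluating at a convenient $b_{2}$, or has only finitely many zeros on the compact interval $[0,b_{1}]$. Meanwhile, continuity in $b_{2}$ combined with the previous gap argument (and the explicit values $\Omega_{n}^{+}(b_{1})=\frac{1}{2}-I_{n}K_{n}(b_{1}\mu)\uparrow \frac{1}{2}$ near $b_{2}=b_{1}$) furnishes an $n_{0}=n_{0}(m,b_{1})$ beyond which $\Omega_{n}^{+}(b_{2})>\Omega_{m}^{-}(b_{2})$ uniformly in $b_{2}\in(0,b_{1}]$, leaving only finitely many relevant indices $n$.

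\textbf{Collision at $b_{1}=b_{2}$.} Specializing the formulas in \eqref{V120} to $b=1$ gives $V_{b_{1},b_{1}}=W_{b_{1},b_{1}}=-\tfrac12$; inserting into \eqref{def AnBn} one finds that the discriminant simplifies dramatically, $(A_{n}-B_{n})^{2}+4\delta\gamma_{n}^{2}=(1+\delta)^{2}\bigl(\tfrac{1}{2n}-I_{n}(b_{1}\mu)K_{n}(b_{1}\mu)\bigr)^{2}$, yielding the very clean
$$\Omega_{n}^{+}(b_{1})=\tfrac{1}{2}-I_{n}(b_{1}\mu)K_{n}(b_{1}\mu),\qquad \Omega_{n}^{-}(b_{1})=\tfrac{n-1}{2n}.$$
By Lemma \ref{lemma InKn}, $b_{1}\mapsto I_{n}(b_{1}\mu)K_{n}(b_{1}\mu)$ is continuous and strictly decreasing from $\tfrac{1}{2n}$ (at $0^{+}$) to $0$ (at $+\infty$). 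Hence for any $m>n$, since $\tfrac{1}{2m}\in(0,\tfrac{1}{2n})$, the intermediate value theorem produces a unique $b_{1}^{\ast}\in(0,\infty)$ with $I_{n}(b_{1}^{\ast}\mu)K_{n}(b_{1}^{\ast}\mu)=\tfrac{1}{2m}$, at which $\Omega_{n}^{+}(b_{1}^{\ast})=\tfrac12-\tfrac{1}{2m}=\Omega_{m}^{-}(b_{1}^{\ast})$, realizing a spectral collision and in particular showing $S_{m,b_{1}^{\ast}}\neq\emptyset$.

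\textbf{Main obstacle.} The most delicate step is the monotonicity of $(\Omega_{n}^{+})_{n}$, i.e.\ the positive-semi-definiteness of $R_{n}$ under the weakest possible assumptions. All remaining arguments are either direct computations or standard analyticity/IVT reasoning; by contrast, verifying the Bessel inequality $(A_{n}-A_{n+1})(B_{n}-B_{n+1})\ge\delta(\gamma_{n}-\gamma_{n+1})^{2}$ genuinely requires exploiting the integral representation from Lemma \ref{lemma InKn} and a non-trivial Cauchy–Schwarz estimate. A close second is the verification that $V_{b_{1},b_{2}}\neq W_{b_{1},b_{2}}$ under $b_{1}\neq b_{2}$ and $\delta\ge b^{2}$, whose sharpness is already suggested by Remark \ref{RMK:bi} and by the collision analysis of the last step.
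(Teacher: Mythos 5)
Your treatment of the eigenvalue formula, the analyticity argument for the finiteness of $S_{m,b_1}$, and the intermediate-value construction of a collision at $b_1=b_2$ (after computing $\Omega_n^{+}=\tfrac12-I_nK_n(b_1\mu)$ and $\Omega_n^{-}=\tfrac{n-1}{2n}$) follow essentially the paper's route. Your handling of $(\Omega_n^-)_n$ is correct and is a genuinely different, tidy alternative: evaluating $Q_{n+1}$ at the root $y_n^-\leq-\max(A_n,B_n)$ so that both linear factors are nonpositive and strictly more negative at index $n+1$, getting $Q_{n+1}(y_n^-)>0$, and combining with $y_n^-<-\max(A_{n+1},B_{n+1})\leq y_{n+1}^+$ to conclude $y_n^-<y_{n+1}^-$. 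The paper instead extends $n\mapsto\Omega_n^\pm$ to a differentiable function of a real variable $x\geq1$, computes $\partial_x\Omega_x^\pm$ in closed form, and checks term-by-term nonnegativity using the monotonicity of $1/x$, $I_xK_x$ and $(1-b^{x})/(2x)$ together with the bound $|M_x|\leq\sqrt{M_x^2+4\delta\Gamma_x^2}$. The advantage of the paper's computation is that it handles the $+$ and $-$ branches uniformly; your evaluation trick exploits a sign condition on $y_n^-$ that has no analogue for $y_n^+$, which is precisely why you are forced to switch strategies there.

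For $(\Omega_n^+)_n$ you symmetrize via conjugation by $P=\mathrm{diag}(1,1/\sqrt{\delta})$ and reduce monotonicity of $\lambda_{\min}(S_n)$ to positive definiteness of $R_n=(\delta+1)(S_n-S_{n+1})$, equivalently to
\begin{equation*}
(A_n-A_{n+1})\,(B_n-B_{n+1})\;\geq\;\delta\,(\gamma_n-\gamma_{n+1})^2
\end{equation*}
(and note you would in fact need strict inequality: Weyl's inequality with a merely PSD perturbation does not yield the \emph{strict} monotonicity the proposition requires). The reduction is correct, but the scalar inequality itself, which is the entire content of the step, is only asserted to ``follow from the Beltrami--Lebedev representation and a Cauchy--Schwarz-type argument.'' This is a genuine gap. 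The quantities $A_n-A_{n+1}$, $B_n-B_{n+1}$ and $\gamma_n-\gamma_{n+1}$ mix three distinct Laplace-type integrals of $1-J_0$ with two different lower endpoints (the diagonal terms $\tfrac{1}{2n}-I_n(b_k\mu)K_n(b_k\mu)$ start at $0$, while $\gamma_n$ starts at $\log(b_1/b_2)$) and three different $J_0$ arguments, and the diagonal differences are linear combinations with unequal weights $\delta$ and $1$ of a rational term and a Bessel term; no single-kernel Cauchy--Schwarz applies directly. Until the displayed inequality is actually proved for all $0<b_2\leq b_1$ and $\delta>0$, the strict monotonicity of $(\Omega_n^+)_n$---and with it the whole no-collision-for-large-$m$ conclusion---is unjustified. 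You should either carry out the Cauchy--Schwarz estimate in full, with the integral decompositions made explicit and the strict inequality verified, or fall back on the paper's direct differentiation of the closed-form $\Omega_x^\pm$, which is more computational but leaves nothing unproved.
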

      \begin{rem}
      	It is readily seen that the set ${S}_{m,b_1}$ is empty, for any $m>p_0$, where $p_0$ is as per given in the statement of Proposition \ref{lem-spec}, above. However, the arguments in our proof below are not enough to show that this set is empty for low $m$-fold symmetries. This remains then unclear for now and can probably be wrong in view to the example of spectral collisions given in the statement the same lemma above in the case $b_1=b_2$.
      \end{rem} 
  \begin{proof}
  We split the proof into four steps for a better readability.  
  \subsubsection*{Computing the eigenvalues}
  We  first proceed  by noticing  that the matrix introduced in Lemma \ref{lemma linearized operator at 0} can be recast as 
 \begin{equation*} 
	 M_{n}(\Omega)=   \begin{pmatrix}
		\Omega+\frac{A_n}{\delta+1}
		& \frac{1}{\delta+1}\Big(\frac{b^{n}}{2n}-I_{n}(b_2\mu)K_{n}(b_1\mu)\Big)\vspace{5mm}\\
		\frac{\delta}{\delta+1}\Big(\frac{ b^{n}}{2n}-  I_{n}(b_2\mu  )K_{n}(b_1\mu)\Big)
		& \Omega+\frac{B_n}{\delta+1},
	\end{pmatrix},
	\end{equation*}
	for any $n\in \mathbb{N}^*$, where $A_n$ and $B_n$ are defined in the statement of Proposition \ref{lem-spec}, above.
	Accordingly,   it is readily seen that 
	\begin{equation*}
		\begin{aligned}
			\textnormal{det} M_{n}&=\left(\Omega+\frac{A_n}{\delta+1}\right)\left(\Omega+\frac{B_n}{\delta+1}\right) -\frac{\delta}{(\delta+1)^2}\left(\frac{b^{n}}{2n}-I_{n}(b_2\mu )K_{n}(b_1\mu)\right) ^2
	\\
	 &= \Omega^2+\frac{A_n+B_n}{\delta+1}\Omega+\frac{A_n B_n}{(\delta+1)^2}-\frac{\delta}{(\delta+1)^2}\left(\frac{b^{n}}{2n}-I_{n}(b_2\mu )K_{n}(b_1\mu)\right)^2.
		\end{aligned}
	\end{equation*} 
    Therefore, noting that the  discriminant of the preceding second order polynomial  satisfies
	\begin{equation*}\label{deltan}
		\begin{aligned}
			(\delta+1)^2\Delta_{n}
			&=\big(A_n+B_n\big)^2- 4A_nB_n +{4\delta}\left(\frac{b^{n}}{2n}-I_{n}(b_2\mu )K_{n}(b_1\mu)\right)^2
			\\ 
			&=\big(A_n-B_n\big)^2 +{4\delta}\left(\frac{b^{n}}{2n}-I_{n}(b_2\mu )K_{n}(b_1\mu)\right)^2> 0 ,
		\end{aligned}
	\end{equation*} 
it then follows, for all $n\geq 1$, that  there exist two angular velocities  $\Omega_n^{\pm}$ which are as given they are introduced in the lemma, and for which the matrix $M_n(\Omega_n^{\pm})$ is singular.   
 \subsubsection*{Monotony of the sequences $(\Omega_n^\pm)_{n\in \mathbb{N}^*}$} 
 Now, we provide a more precise analysis of the angular velocities $ \Omega_n^{\pm}$. To that end, note first, due to the representations    
\begin{equation*}
	A_n  
 	= -\delta \left(  \frac{1}{2} - \frac{1}{2n}\right) - b \left( \frac{b}{2} - I_1(b_2 \mu)K_1(b_1 \mu)  \right) - \Big (I_1(b_1 \mu)K_1(b_1 \mu)  - I_n(b_1 \mu)K_n(b_1 \mu)  \Big)
\end{equation*}
and 
\begin{equation*}
	B_n = -  \left(  \frac{1}{2} - \frac{1}{2n}\right) - \frac{\delta}{b} \left( \frac{b}{2} - I_1(b_2 \mu)K_1(b_1 \mu)  \right) - \delta\Big (I_1(b_2 \mu)K_1(b_2 \mu)  - I_n(b_2 \mu)K_n(b_2 \mu)  \Big),
\end{equation*} 
 together with Lemma \ref{lemma InKn}, that the sequences $(A_n)_{n\in \mathbb{N}^*}$ and $(B_n)_{n\in \mathbb{N}^*}$ are non-positive and non-increasing.  
 
  Now, we claim that $(\Omega_{n}^\pm)_{n\in \mathbb{N}^*}$, defined in \eqref{Omeg+-}, are both non-negative, non-decreasing sequences. To see that, we regard these sequences as functions of the real variable $x\in [1,\infty)$, i.e., we now consider the functions $x\mapsto \Omega_{x}^\pm $ which extend   the preceding sequences on   $[1,\infty)$. Note that it is easy to check that these functions are differentiable. Therefore, denoting 
 $$
M_x\bydef   \frac{A_x-B_x}{\delta+1},\qquad \Gamma_x\bydef   \frac{\frac{ b^{x}}{2x}- I_{x}(b_2\mu)K_{x}(b_1\mu)}{\delta+1},
$$
 we compute, for any $x\in [1,\infty)$, that  
  \begin{equation*}
 	\begin{aligned}
 	2(\delta+1)\partial_x\Omega_x^{\pm} 
 		&=\left( \pm\frac{M_x}{\sqrt{M_x^2+4\delta \Gamma_x^2}}-1\right)\partial_x A_x -\left(\pm\frac{M_x}{\sqrt{M_x^2+4\delta \Gamma_x^2}}+1\right) \partial_x B_x 
 		 \\
 		& \qquad \pm \frac{4\delta \Gamma_x}{\sqrt{M_x^2+4\delta \Gamma_x^2}}\partial_x\left(\frac{ b^{x}}{2x}- I_{x}(b_2\mu)K_{x}(b_1\mu)\right).
 	\end{aligned}
 \end{equation*} 
 Then, from the expression of $A_x$ and $B_x$, given by \eqref{def AnBn}, we get
 \begin{equation*}
 	\begin{aligned}
 	2(\delta+1)\partial_x\Omega_x^{\pm}      
        &=-\left(\delta+1\mp\frac{(\delta-1) M_x}{\sqrt{M_x^2+4\delta \Gamma_x^2}}\right)\partial_x  \left(\frac{1}{2x}\right)
        \\
        &\qquad  -\left(1\mp\frac{M_x}{\sqrt{M_x^2+4\delta \Gamma_x^2}}\right)\partial_x  \Big ( I_{x}(b_1\mu)K_{x}(b_1\mu)\Big) 
         \\ 
        &\qquad -\delta\left(1\pm\frac{M_x}{\sqrt{M_x^2+4\delta \Gamma_x^2}}\right) \partial_x  \left( I_{x}(b_2\mu)K_{x}(b_2\mu)\right)
        \\
        & \qquad \pm \frac{4\delta \Gamma_x}{\sqrt{M_x^2+4\delta \Gamma_x^2}}\partial_x\left(\frac{ b^{x}}{2x}- I_{x}(b_2\mu)K_{x}(b_1\mu)\right).
 	\end{aligned}
 \end{equation*}
 Hence, one deduces that 
 \begin{equation*}
 	\begin{aligned}
 		2(\delta+1)\partial_x\Omega_x^{-}
         &=-(\delta+1)\Bigg(\frac{\sqrt{M_x^2+4\delta \Gamma_x^2}+ M_x}{\sqrt{M_x^2+4\delta \Gamma_x^2}}\Bigg)\partial_x  \left(\frac{1}{2x}\right)
          \\
          & \quad  -\Bigg(1+\frac{M_x}{\sqrt{M_x^2+4\delta \Gamma_x^2}}\Bigg)\partial_x  \Big( I_{x}(b_1\mu)K_{x}(b_1\mu)\Big)  
      \\ &\quad -\delta\Bigg(1-\frac{M_x}{\sqrt{M_x^2+4\delta \Gamma_x^2}}\Bigg)\partial_x  \Big( I_{x}(b_2\mu)K_{x}(b_2\mu)\Big)
      \\
      & \quad - \frac{4\delta \Gamma_x}{\sqrt{M_x^2+4\delta \Gamma_x^2}}\partial_x\Big(\frac{ b^{x}}{2x}- I_{x}(b_2\mu)K_{x}(b_1\mu)\Big) .
 	\end{aligned}
 \end{equation*} 
Likewise for $\Omega_x^+$, by further employing the straitforward computation 
\begin{equation*}
	\begin{aligned}
		\delta+1-\frac{(\delta-1) M_x+4\delta \Gamma_x}{\sqrt{M_x^2+4\delta \Gamma_x^2}}&=\frac{(\delta+1)\sqrt{M_x^2+4\delta \Gamma_x^2}-(\delta-1) M_x- 4\delta \Gamma_x}{\sqrt{M_x^2+4\delta \Gamma_x^2}}
		\\ 
		&=\frac{(\delta+1)^2({M_x^2+4\delta\Gamma_x^2})-\big((\delta-1)M_x+4\delta\Gamma_x\big)^2}{\sqrt{M_x^2+4\delta\Gamma_x^2}\big((\delta+1)\sqrt{M_x^2+4\delta\Gamma_x^2}+(\delta-1)M_x+ 4\delta\Gamma_x\big)}
		\\ 
		&=4\delta\left(\frac{ M_x^2+(\delta-1)^2\Gamma_x^2-2(\delta-1)M_x\Gamma_x}{\sqrt{M_x^2+4\delta\Gamma_x^2}\big((\delta+1)\sqrt{M_x^2+4\delta\Gamma_x^2}+(\delta-1)M_x+4\delta\Gamma_x\big)}\right)
		\\ 
		&=\frac{4\delta \big(M_x-(\delta-1)\Gamma_x\big)^2}{\sqrt{M_x^2+4\delta\Gamma_x^2}\big((\delta+1)\sqrt{M_x^2+4\delta\Gamma_x^2}+(\delta-1)M_x+4\delta\Gamma_x\big)} ,
	\end{aligned}
\end{equation*}
it is then readily seen that 
\begin{equation*}
	\begin{aligned}
		 2(\delta+1)\partial_x\Omega_x^{+}
		 &=-\bigg(\frac{4\delta \big(M_x-(\delta-1)\Gamma_x\big)^2}{\sqrt{M_x^2+4\delta\Gamma_x^2}\big((\delta+1)\sqrt{M_x^2+4\delta\Gamma_x^2}+(\delta-1)M_x+4\delta\Gamma_x\big)}\bigg)\partial_x  \left(\frac{1}{2x}\right)
		 \\ 
		 &\quad- \frac{4\delta \Gamma_x}{\sqrt{M_x^2+4\delta \Gamma_x^2}}\partial_x\left( \frac{1}{2x}-\frac{ b^{x}}{2x} \right) 
		 \\
		 &\quad-\bigg(1-\frac{M_x}{\sqrt{M_x^2+4\delta \Gamma_x^2}}\bigg) \partial_x  \Big( I_{x}(b_1\mu)K_{x}(b_1\mu)\Big) 
		 \\
		 &\quad -\delta\bigg(1+\frac{M_x}{\sqrt{M_x^2+4\delta \Gamma_x^2}}\bigg)\partial_x  \Big(  I_{x}(b_2\mu)K_{x}(b_2\mu)\Big) 
		 \\
		 &\quad- \frac{4\delta \Gamma_x}{\sqrt{M_x^2+4\delta \Gamma_x^2}}\partial_x\Big(  I_{x}(b_2\mu)K_{x}(b_1\mu)\Big) .
	\end{aligned}
\end{equation*}
At last, the conclusion of the proof of the monotonicity if  the function $ x\mapsto \Omega_x^\pm$ follows from the fact that the functions 
\begin{equation*}
	x \mapsto \frac{1}{x}, \qquad x\mapsto  I_{x}(b_k\mu)K_{x}(b_j\mu), \qquad  x\mapsto \frac{1}{2x}-\frac{ b^{x}}{2x}
\end{equation*}
are decreasing, for any $k,j\in \{1,2\}$ as soon as $b_k \leq b_j$, and $b\in (0,1)$, altogether with the fact that $\Gamma_x \geq 0$ and that
\begin{equation*} 
 -\sqrt{M_x^2+4\delta\Gamma_x^2}\leqslant M_x\leqslant \sqrt{M_x^2+4\delta\Gamma_x^2},
\end{equation*}
for any $x\in [1,\infty)$. 
As a consequence of the preceding computations, we deduce that
\begin{equation*}
	\Omega_n^\pm \neq \Omega_m^\pm,
\end{equation*}
for all $m\neq n \in \mathbb{N}^*$. We will now precise sufficient conditions that will ensure that 
 \begin{equation*}
	\Omega_n^+ \neq \Omega_m^-.
\end{equation*}
To that end, we first observe that 
\begin{equation*}
	\Omega_n^- < \Omega_n^+,
\end{equation*}
for all $n\in \mathbb{N}^*$. Moreover, by denoting
\begin{equation*}
	A_\infty \bydef \lim_{n\to \infty} A_n \quad \text{and} \quad B_\infty \bydef \lim_{n\to \infty} B_n,
\end{equation*}
it then follows that  that 
\begin{equation*}
	\Omega_\infty^\pm \bydef  -\frac{1}{2(\delta+1) } \Big  (  A_\infty + B_\infty  \mp | A_\infty - B_\infty| \Big ).
\end{equation*}
On the other hand,   we observe that 
\begin{equation*}
	A_\infty = - \left( \frac{b^2+\delta}{2} +  I_1(b_1 \mu)K_1(b_1 \mu) - b    I_1(b_2 \mu)K_1(b_1 \mu) \right),
\end{equation*}
whereas
\begin{equation*}
B_\infty = - \left( \frac{ 1+ \delta}{2} + \delta I_1(b_2 \mu)K_1(b_2 \mu) - \frac{\delta}{b}    I_1(b_2 \mu)K_1(b_1 \mu) \right).
\end{equation*}
Therefore, one sees that 
\begin{equation}\label{A-B}
	\begin{aligned}
		A_\infty - B_\infty
		& =  \frac{1-b^2}{2}  - I_1(b_1 \mu)K_1(b_1 \mu) + \delta I_1(b_2 \mu)K_1(b_2 \mu)   
		\\
		&+  \left(    b^2 - \delta \right)  \frac{1}{b} I_1(b_2 \mu)K_1(b_1 \mu).
	\end{aligned}
\end{equation}
Observe that the value of preceding quantity is identically zero when $b_1=b_2$. However, we are going to show now that it is strictly positive elsewhere. To that end, we are going to employ the inequalities 
\begin{equation*} 
	 I_1(b_1\mu) >\frac{1}{b} I_1(b_2\mu) 
\end{equation*} 
and
\begin{equation*} 
	I_1(b_2\mu)K_1(b_2\mu) > I_1(b_1\mu)K_1(b_1\mu)
\end{equation*}
which are valid for any $b_2< b_1$ and follow directly from Lemma  \ref{lemma InKn}. Now, observe that, under   the assumption $ \delta\geq b^2$, the preceding bounds yields that 
\begin{equation*}
	\begin{aligned}
		A_\infty - B_\infty 
	& >   \frac{1-b^2}{2} +(\delta -1 )  I_1(b_1 \mu)K_1(b_1 \mu)    +  \left(    b^2 - \delta \right)  \frac{1}{b} I_1(b_2 \mu)K_1(b_1 \mu)
	\\
	& >   \frac{1-b^2}{2} +(\delta -1 )  I_1(b_1 \mu)K_1(b_1 \mu)    +  \left(    b^2 - \delta \right)  I_1(b_2 \mu)K_1(b_1 \mu)
	\\
	& =   (1-b^2)  \left( \frac{1}{2} -   I_1(b_1 \mu)K_1(b_1 \mu)   \right) >0,
	\end{aligned}
\end{equation*}
where we have utilized the second point from Lemma \ref{lemma InKn} in the last line. 
Accordingly, due to the monotonicity of the sequences $(\Omega_n^\pm)_{n\in\mathbb{N}^*}$, it is then readily seen that one can find $p_0\in \mathbb{N}^*$ such that 
$$ \Omega_m^+\geq  \Omega_{p_0}^+ >  \Omega_\infty^- > \Omega_n^-,$$
for all $m\geq p_0 $ and $n\in \mathbb{N}^*$, thereby concluding that 
\begin{equation*}
	  \Omega_m^+ > \Omega_n^-, 
\end{equation*}
for all $n,m\geq p_0.$

\subsubsection*{Employing analyticity to study the cardinal of spectral collisions}

Let us  now assume that $ 1\leq  n < p_0$ and fix $m\in \mathbb{N}^*$ with $1\leq n < m$.  Also, we now regard $\Omega_n^{\pm}$ as    analytic functions of  the real variable $b_2\in (0,b_1)$. We  emphasize that this is a consequence of the analytic property of the Bessel functions defining the eigenvalues $\Omega_n^{\pm}$, see \cite[Appendix B.2]{MF10}.  
  
  Thus, by virtue of the monotony of the eigenvalues $n\mapsto \Omega_{n}^\pm $, we infer, for any $ 1\leq  n \leq p_0$, that there is at most one index $m>n$   for which the equality
$$\Omega_m^-  =   \Omega_n^+  $$
can probably hold for some $b_2\in (0,b_1)$.  Accordingly, for $1\leq n \leq p_0$, we introduce the (possibly empty) set 
  $$ S_{n,b_1}\bydef    \left\{ b_2\in (0,b_1): \exists m=m(n)>n, \quad  \text{such that} \quad  \Omega_{m(n)}^-(b_2) =   \Omega_n^+(b_2)  \right\}    .$$ 
  Therefore, we claim that there is      $q_0\in \mathbb{N}^*$ and a finite sequence of real numbers $(c_j)_{1\leq j \leq q_0}$ such that 
\begin{equation}\label{finite:elements}
\bigcup_{ 1\leq n \leq  p_0 } S_{n,b_1} \subset \left\{ c_{j} \in (0,1):  1\leq j \leq q_0   \right\}.
\end{equation}
  
  In other words, loosely speaking, we  claim that the set of   values  $b_2\in (0,b_1)$ for which the sequence of eigenvalues of the matrix $ M_n(\Omega)$ can match is, in worse cases, negligible for Lebesgue's measure.   Again, showing that relies on the analytic property of   the function 
  $$b_2\mapsto \Omega_n^{+}(b_2) - \Omega_{m(n)}^{-}(b_2) $$ 
  on $(0,b_1)$, for all $n\in \mathbb{N}^*,$ which yields that this non constant function   has at most isolated zeros on open connected sets.  
   Consequently,  we deduce that 
  $$S_{n,b_1}  \subset  \left\{ c_{j,n} \in (0,b_1):   1\leq   j \leq j_*    \right\}, \quad \text{for some} \quad j_* \in \mathbb{N}^*,$$ 
   and some real numbers $c_{j,n} \in (0,b_1)$, whereby \eqref{finite:elements} follows.  
   At last, we deduce    that 
   $$ \Omega_n^+  \neq \Omega_m^- ,  $$
   for any $b_2\in (0,b_1)\backslash \{ c_j\}_{1\leq j\leq q_0}$ and all $  n,m\in \mathbb{N}^*$.

\subsubsection*{Example of spectral collisions} Let us now   consider the case of identical initial discs, i.e., $b_1=b_2$ and show that, in this particular situation, there exist values of $b_1\in (0,\infty)$ for which the sequence of matrix $\big (M_n(\Omega)\big)_{n\in \mathbb{N}^*}$ possess a simple sequence of eigenvalues, i.e., that 
  \begin{equation*}
  	\Omega_n^+\neq \Omega_n^-, \quad \text {for all} \quad n \in \mathbb{N}^*,
  \end{equation*}   
 but with possible ``counterpart collision'', i.e, 
  \begin{equation*}
  	\Omega_n^+=  \Omega_m^-, \quad \text {for some} \quad n,m\in \mathbb{N}^*.
  \end{equation*}  
  
To see that, we emphasize that a direct computation of the angular velocities $\Omega_n^\pm$, by setting $b_1=b_2$ in \eqref{Omeg+-}, yields that 
\begin{equation}\label{EV:delta=1}
	\Omega_n^+ = \frac{1}{2} - I_n K_n(b_1\mu)  \qquad \text{and} \qquad \Omega_n^- = \frac{1}{2} - \frac{1}{2n}.
\end{equation}
Therefore, in view of Lemma \ref{lemma InKn} above, it is easy to see that both sequences $(\Omega_n^\pm)_{n\in \mathbb{N}^*}$ are increasing and converge to the same limit $\frac{1}{2}$. Also, it is readily seen, again by virtue of Lemma \ref{lemma InKn}, that 
\begin{equation*}
	\Omega_n^+ > \Omega_n^-, \quad \text{for all} \quad n \in \mathbb{N}^*,
\end{equation*}
and, whence, by monotony of the sequence $ (\Omega_n^-)_{n\in \mathbb{N}^*}$, that 
\begin{equation*}
		\Omega_n^+\neq \Omega_m^-, \quad \text {for all} \quad n\geq m\in \mathbb{N}^*.
\end{equation*}
However,   using the asymptotic properties of the function $x\mapsto I_nK_n(x)$ to write that
\begin{equation*}
	\lim_{x\to 0} I_1K_1(x) = \frac{1}{2} \qquad \text{and} \qquad \lim_{x\to \infty} I_1K_1(x) = 0,
\end{equation*}
meaning that $x\mapsto  I_1K_1 (x)$ is a continuous function and takes values in $(0,\frac{1}{2})$, allows us to deduce the existence of $x_0\in (0,\infty)$ and $m\in \mathbb{N}^*$ such that 
\begin{equation*}
	I_1K_1 (x_0) = \frac{1}{2m},
\end{equation*}
 thereby deducing that there is $b_1=b_2\in (0,\infty)$ such that 
 \begin{equation*}
 	\Omega_1^+ = \Omega_m^-, \quad \text{for some} \quad m \in \mathbb{N}^*.
 \end{equation*}
 This shows the existence of spectral collisions in the case $b_1=b_2$ and concludes the proof of the proposition. 
   \end{proof} 

\subsection{Applying Crandall-Rabinowitz's theorem and proof of Theorem \ref{Thm:2-A}}
 	   Now, we are in position to establish the last prerequisites before we apply   Crandall-Rabinowitz’s theorem. In particular, in the following proposition, we   prove essential properties of the kernel and image of the linearized operator $d_r \mathcal{F}$ along with the transversality condition.

  \begin{prop}\label{prop.last}
Let $m\in \mathbb{N}^*$ and $\alpha\in (0,1)$ be fixed and assume that the radii of the initial discs are such that $0<b_2< b_1$. Set $b=\frac{b_2}{b_1}$, further assume that $\delta\geq b^2$  and  either $m\geq p_0$ or  $b_2\notin S_{m,b_1}$, where $p_0$ and $S_{m,b_1}$ are introduced in Proposition \ref{lem-spec} above.  Then, the following holds true:
\begin{enumerate}
\item The linearized  operator $d_r \mathcal{F}(\Omega,0)$ has a 
non trivial kernel in $\mathcal{X} _m^\alpha$ if and only if $\Omega=\Omega^{\pm}_{m n}$ for some $n\in\mathbb{N}^*$, where $\Omega^{\pm}_{m n}$ is given by \eqref{Omeg+-}. In this case, the kernel of $d_r \mathcal{F}(\Omega_m^{\pm},0)$ is a one-dimensional vector space in $\mathcal{X}_m^\alpha $ generated by  
\begin{equation*}
	\theta \mapsto 
	\begin{pmatrix}
		  \Omega_m^\pm + \frac{B_m}{\delta + 1 }
		  \vspace{3mm}\\ 
		  -\frac{\delta}{\delta+1}  \gamma_m 	
		  \end{pmatrix}
		   \cos(m \theta ) ,
\end{equation*} 
where, $B_m$ is defined by \eqref{def AnBn} and we set
\begin{equation}\label{sigma:def}
  \gamma_m \bydef    \frac{b^{m}}{2m}-I_{m}(b_2\mu )K_{m}(b_1\mu), \qquad b= \frac{b_2}{b_1} .
\end{equation} 
\item  The range of $d_r \mathcal{F}(\Omega^{\pm}_{m },0)$ is closed in $\mathcal{Y}_m^\alpha$ and is of co-dimension one.

\item At last, the transversality condition holds, i.e., we have that
  $$\partial_\Omega d_r \mathcal{F}(\Omega^{\pm}_{m },0)  \begin{pmatrix}
		1\\
		\sigma_m
	\end{pmatrix}    \cos( m \cdot)  \notin R\big(\partial_r \mathcal{F}(\Omega^{\pm}_{m},0)\big).$$
\end{enumerate}
\end{prop}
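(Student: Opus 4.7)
The plan is to read off all three items from the Fourier representation of the linearized operator given by Lemma~\ref{lemma linearized operator at 0}, combined with the spectral information of Proposition~\ref{lem-spec}.

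For~(1), I would write any $h=(h_1,h_2)\in \mathcal{X}_m^\alpha$ as $h_k(\theta)=\sum_{n\geq 1}c_{n,k}\cos(nm\theta)$, so that Lemma~\ref{lemma linearized operator at 0} becomes
\begin{equation*}
d_r\mathcal{F}(\Omega,0)[h](\theta)=-\sum_{n\geq 1}nm\,M_{nm}(\Omega)\begin{pmatrix} c_{n,1} \\ c_{n,2} \end{pmatrix}\sin(nm\theta).
\end{equation*}
The kernel is non-trivial iff some $M_{nm}(\Omega)$ is singular, which by Proposition~\ref{lem-spec} forces $\Omega\in\{\Omega_{nm}^{\pm}\}_{n\geq 1}$. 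At $\Omega=\Omega_m^{\pm}$ the strict monotonicity of $(\Omega_j^{\pm})_j$ together with the standing hypothesis ($m\geq p_0$ or $b_2\notin S_{m,b_1}$, ruling out $\pm/\mp$ collisions) ensures that $M_{nm}(\Omega_m^{\pm})$ is invertible for every $n\geq 2$, whereas $M_m(\Omega_m^{\pm})$ has rank one. Consequently $c_{n,\cdot}=0$ for $n\geq 2$ and $(c_{1,1},c_{1,2})$ spans the one-dimensional $\ker M_m(\Omega_m^{\pm})$; solving $M_m(\Omega_m^{\pm})v=0$ directly (e.g.\ using the second row) produces the explicit generator displayed in the statement.

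For~(2), I would factorize $d_r\mathcal{F}$. Since the computation in the proof of Lemma~\ref{lemma linearized operator at 0} shows $V_k(0)$ to be a constant, Lemma~\ref{lem lin op} at $r=0$ rewrites as
\begin{equation*}
d_r\mathcal{F}(\Omega,0)[h]=\partial_\theta\bigl[(\Omega I+V(0))h-L(0)[h]\bigr],\qquad V(0)=\mathrm{diag}(V_{b_1,b_2},W_{b_1,b_2}).
\end{equation*}
The outer $\partial_\theta$ is an isomorphism between the zero-mean part of $\mathcal{X}_m^{1+\alpha}$ and $\mathcal{Y}_m^\alpha$; the limit analysis in the proof of Proposition~\ref{lem-spec} gives $\Omega_m^{\pm}<\Omega_\infty^{\pm}\in\{-V_{b_1,b_2},-W_{b_1,b_2}\}$, so $\Omega_m^{\pm}I+V(0)$ is invertible; and the logarithmic character of the kernels $G_{k,j}$ makes $L(0)$ bounded from $\mathcal{X}_m^\alpha$ into $\mathcal{X}_m^{1+\alpha}$, hence compact on $\mathcal{X}_m^\alpha$ by the classical embedding $C^{1+\alpha}\hookrightarrow C^\alpha$. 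The operator $(\Omega_m^{\pm}I+V(0))-L(0)$ is then a compact perturbation of an isomorphism, so Fredholm of index zero, and thus so is $d_r\mathcal{F}(\Omega_m^{\pm},0)$. Together with~(1) this yields a closed range of codimension one.

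For~(3), only the term $\Omega h'$ in Lemma~\ref{lem lin op} depends on $\Omega$, hence for the kernel generator $v_0=(v_1,v_2)\cos(m\,\cdot\,)$ from~(1),
\begin{equation*}
\partial_\Omega d_r\mathcal{F}(\Omega_m^{\pm},0)[v_0](\theta)=-m\begin{pmatrix} v_1\\ v_2 \end{pmatrix}\sin(m\theta).
\end{equation*}
By the Fourier representation, this lies in the range iff $(v_1,v_2)^T\in\mathrm{Range}\,M_m(\Omega_m^{\pm})$; since $(v_1,v_2)^T$ also spans $\ker M_m(\Omega_m^{\pm})$, this would force $\ker M_m(\Omega_m^{\pm})\cap\mathrm{Range}\,M_m(\Omega_m^{\pm})\neq\{0\}$, which for a $2\times 2$ matrix of rank one is equivalent to nilpotency, i.e.\ $\mathrm{tr}\,M_m(\Omega_m^{\pm})=0$. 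A direct computation using the definition of $\Omega_m^{\pm}$ gives
\begin{equation*}
\mathrm{tr}\,M_m(\Omega_m^{\pm})=2\Omega_m^{\pm}+\frac{A_m+B_m}{\delta+1}=\pm\frac{\sqrt{(A_m-B_m)^2+4\delta\gamma_m^2}}{\delta+1}\neq 0,
\end{equation*}
since the discriminant is strictly positive by Proposition~\ref{lem-spec}; transversality follows. The main delicacy will be the compactness of $L(0)$ in the H\"older setting, where the regularity gain has to be read off the mildly singular kernels $G_{k,j}$ using the tools of Proposition~\ref{proposition regularity of the functional}, after which compactness follows from the standard H\"older embedding.
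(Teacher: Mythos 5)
Parts (1) and (3) of your proposal follow the paper's route essentially verbatim: the Fourier-multiplier form of $d_r\mathcal{F}(\Omega,0)$ from Lemma~\ref{lemma linearized operator at 0}, Proposition~\ref{lem-spec} to pin down the singular frequencies and the simplicity of the kernel, and for the transversality the reduction to $\operatorname{Tr}M_m(\Omega_m^\pm)\neq 0$ via the rank-one linear-algebra observation (the paper isolates this as Lemma~\ref{Lemma:ALG}, and your nilpotency phrasing is the same fact). Part (2) is where you genuinely diverge. The paper characterizes the range explicitly as the set of sine series satisfying the compatibility condition \eqref{cond1}, then proves the reverse inclusion by writing the candidate preimage $\rho$ in Fourier, splitting $M_{nm}^{-1}(\Omega_m^\pm)=M_\infty^{-1}(\Omega_m^\pm)+P_{n,m}$ with $P_{n,m}=O(1/n)$ (via the Bessel asymptotics \eqref{asymptotic:Bessel:1}), and recognizing the $P_{n,m}$ contribution as a convolution with an $L^1(\mathbb T)$ kernel, hence preserving $C^\alpha$. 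You instead factor the operator as $\partial_\theta\circ[(\Omega I+V(0))-L(0)]$, argue $L(0)$ is compact on $\mathcal X_m^\alpha$ by a one-derivative gain, and invoke compact-perturbation Fredholm theory to get index zero. Both strategies rest on the same analytic input (the $O(1/n)$ decay of the multiplier of $L(0)$, equivalently of $M_{nm}^{-1}-M_\infty^{-1}$); yours is more abstract and standard, the paper's is more hands-on. One point in your argument that deserves a sentence of care: for the $+$ branch the inequality $\Omega_m^+<\Omega_\infty^+$ alone does not rule out $\Omega_m^+=\Omega_\infty^-=\min(-V_{b_1,b_2},-W_{b_1,b_2})$, in which case $\Omega_m^+ I+V(0)$ fails to be invertible and the compact-perturbation step collapses. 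When $m\geq p_0$ this is fine since Proposition~\ref{lem-spec} gives $\Omega_m^+>\Omega_\infty^-$, but under the alternative hypothesis $b_2\notin S_{m,b_1}$ (which constrains $\Omega_m^-$, not $\Omega_m^+$) you should spell out why $\Omega_m^+\neq\Omega_\infty^-$. To be fair, the paper's construction has the same latent requirement (it needs the limit $M_\infty^{-1}(\Omega_m^\pm)$ to exist, i.e.\ $\det M_\infty(\Omega_m^\pm)\neq 0$) and does not flag it either, so this is a shared subtlety rather than a flaw specific to your route.
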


\begin{proof}
Let $\rho=(\rho_1,\rho_2)\in \mathcal{X} _m^\alpha$ with  Fourier series  expansions
$$   \rho_{k}(\theta)=\displaystyle\sum_{n\geq 1}c_{nm,k}\cos(n m\theta) , $$
for some $c_{n,k}\in\mathbb{R}$ and all $ k\in\{1,2\}$.
 According to Lemma \ref{lemma linearized operator at 0} above, one has that  
	\begin{equation}\label{def MjbO2}
	d_r\mathcal{F}(\Omega,0)\begin{pmatrix}
		{\rho}_{1}\vspace{0.1cm}\\
		{\rho}_{2}
	\end{pmatrix}=-\sum_{n\geq 1}n m M_{n m}(\Omega)\begin{pmatrix}
		c_{n m,1}\vspace{0.1cm}\\
		c_{n m,2}
	\end{pmatrix}\sin(n m\theta).
	\end{equation}
	Hence, in view of Proposition \ref{lem-spec}, the determinant of the matrix $M_{m}(\Omega)$ vanishes if and only if $\Omega=\Omega^{\pm}_{m}$.
	Thus, the kernel of $d_r\mathcal{F}(\Omega^{\pm}_{m},0)$ is non trivial and it is one-dimensional if and only if for all $n\geq 2$,
	\begin{equation}\label{det=/=0}
	\textnormal{det}\big(M_{n m}(\Omega^{\pm}_{m })\big)\neq 0 
	\end{equation}
which is equivalent to
$$
\Omega^{\pm}_{m }\neq\Omega^{\pm}_{n m } \quad \textnormal{for all}\quad n\neq 1.
$$
The preceding condition is ensured by Proposition \ref{lem-spec} whenever one the two following conditions are fulfilled: either $m >p_0$ or that $b_2\notin S_{m,b_1}$.  
Now, observe that  $\rho=(\rho_1,\rho_2)$ belongs to the
kernel of $d_r\mathcal{F}(\Omega^{\pm}_{m},0)$ if and only the Fourier coefficients in its Fourier expansion  \eqref{def MjbO2} vanish, i.e, if
$$
c_{nm,1}=c_{nm,2}=0,   $$
for all $ n\neq 1$, and 
$$ (c_{m,1},c_{m,2})\in\textnormal{ker}\big(M_{m}(\Omega^{\pm}_{m})\big).
$$ 
Hence, by noticing that 
\begin{equation*}
	 \begin{pmatrix}
		\Omega_m^\pm+\tfrac{A_n}{\delta+1}
		& \tfrac{\gamma_m }{\delta+1}\vspace{5mm}\\
		\tfrac{\delta}{\delta+1}\gamma_m 
		& \Omega_m^\pm+\tfrac{B_n}{\delta+1},
	\end{pmatrix} \cdot 
	\begin{pmatrix}
		  \Omega_m^\pm + \frac{B_m}{\delta + 1 }
		  \vspace{3mm}\\ 
		  -\frac{\delta}{\delta+1}  \gamma_m 	
		  \end{pmatrix} 
		  =
		  \begin{pmatrix}
		  0
		   \vspace{2mm}\\
		  0	
		  \end{pmatrix} ,
\end{equation*}
 one then deduces that the generator of $\textnormal{ker}\big(d_r\mathcal{F}(\Omega^{\pm}_{m},0)\big)$ can be chosen as the pair of functions
$$
\theta \mapsto \begin{pmatrix}
		  \Omega_m^\pm + \frac{B_m}{\delta + 1 }
		  \vspace{3mm}\\ 
		  -\frac{\delta}{\delta+1}  \gamma_m 	
		  \end{pmatrix} \cos( m \theta).
$$ 
This takes care of the first claim in the proposition.  
Now, we turn our attention to show that the range of $d_r\mathcal{F}(\Omega^{\pm}_{m},0)$ coincides with the space  of   functions $(g_1,g_2)\in \mathcal{Y}_m^\alpha$ such that
\begin{equation} \label{g:expression}
\begin{pmatrix}
		g_1\\
		g_2
	\end{pmatrix}=\sum_{n\geq 1} \begin{pmatrix}
		g_{nm,1}\\
		g_{nm,2}
	\end{pmatrix} \sin(n m \theta),
\end{equation}
where $g_{nm,1},g_{nm,2}\in\mathbb{R}$, for any $n\geq 2$, and there exists $c_{m,1},c_{m,2}\in\mathbb{R}$ such that
\begin{equation}\label{cond1}
M_m(\Omega_m^\pm)\begin{pmatrix}
		c_{ m,1}\vspace{0.1cm}\\
		c_{ m,2}
	\end{pmatrix}=\begin{pmatrix}
		g_{ m,1}\vspace{0.1cm}\\
		g_{ m,2}
	\end{pmatrix}.
\end{equation}

First, observe that the range of  $d_r\mathcal{F}(\Omega^{\pm}_{m},0)$ is obviously included in the space introduced above, which is clearly closed and of co-dimension one in $\mathcal{Y}_m^\alpha$.  Therefore, it only remains to check   the converse inclusion.  
To see that, fixing $(g_1,g_2)\in \mathcal{Y} _m^\alpha$, we shall prove that the equation 
\begin{equation} \label{rang:equa:1}
d_r\mathcal{F}(\Omega^{\pm}_{m},0)[(\rho_1,\rho_2)]=\begin{pmatrix}
		g_1\\
		g_2
	\end{pmatrix}
\end{equation} 
admits a solution in the space $\mathcal{X} _m^\alpha$, as soon as $(g_1,g_2) $   satisfies \eqref{g:expression} and \eqref{cond1} for some $c_{nm,1},c_{nm,2}\in\mathbb{R}$.
 To that end, we first notice that  \eqref{rang:equa:1} is equivalent to
$$
-n m M_{n m}(\Omega_m^{\pm})\begin{pmatrix}
		c_{n m,1}\vspace{0.1cm}\\
		c_{n m,2}
	\end{pmatrix}=\begin{pmatrix}
		g_{n m,1}\vspace{0.1cm}\\
		g_{n m,2}
	\end{pmatrix}, \quad \text{for all}\quad  n\geq 1.
$$
The existence of $c_{m,1}$ and $c_{m,2}$ is obviously ensured  by  \eqref{cond1}. As for the remaining     coefficients, it is readily seen that 
$$
\begin{pmatrix}
		c_{n m,1}\vspace{0.1cm}\\
		c_{n m,2}
	\end{pmatrix}=-\frac{1}{n m}M_{n m}^{-1}(\Omega_m^{\pm})\begin{pmatrix}
		g_{n m,1}\vspace{0.1cm}\\
		g_{n m,2}
	\end{pmatrix}, \quad \text{for all} \quad n\geq 2,
$$
where we have used the fact that the matrix $M_{n m} (\Omega_m^{\pm}) $  is invertible for all $n\geq 2$, which is established  at the beginning of this proof. This defines the coefficients $c_{nm,1}$ and $c_{nm,2}$.

Next, for a given   couple of functions $ (g_1,g_2) \in \mathcal{Y} _m^\alpha$ as in \eqref{g:expression},   we  need to   show   that the solution  of \eqref{rang:equa:1} belongs to the space $\mathcal{X} _m^\alpha$. More precisely, we claim that  
$$
\begin{pmatrix}
		\rho_1\\
		\rho_2
	\end{pmatrix}=  \begin{pmatrix}
		c_{m,1}\vspace{0.1cm}\\
		c_{  m,2}
	\end{pmatrix} \cos(  m \theta) -\sum_{n\geq 2} \frac{1}{n m}M_{n m}^{-1}(\Omega_m^{\pm})\begin{pmatrix}
		g_{n m,1}\vspace{0.1cm}\\
		g_{n m,2}
	\end{pmatrix} \cos(n m \theta)\in C^{1+\alpha}(\mathbb{T})\times C^{1+\alpha}(\mathbb{T}),
$$
where, for $c_{m,1}$ and $ c_{m,2} $ are defined in \eqref{cond1}. 
It is clear that we can restrict our selves to showing that 
$$\sum_{n\geq 2} \frac{1}{n m}M_{n m}^{-1}(\Omega_m^{\pm})\begin{pmatrix}
		g_{n m,1}\vspace{0.1cm}\\
		g_{n m,2}
	\end{pmatrix} \cos(n m \theta)\in C^{1+\alpha}(\mathbb{T})\times C^{1+\alpha}(\mathbb{T}),$$
	  which is equivalent to proving that 
	\begin{equation}\label{claim:g:Ca}
\begin{pmatrix}
		 \widetilde{\rho}_1\\
		\widetilde{\rho}_2
	\end{pmatrix} \bydef    \sum_{n\geq 2}  M_{n m}^{-1}(\Omega_m^{\pm})
\begin{pmatrix}
		g_{n m,1}\vspace{0.1cm}\\
		g_{n m,2}
	\end{pmatrix} \sin(n m \theta)\in C^{\alpha}(\mathbb{T})\times C^{\alpha}(\mathbb{T}).
	\end{equation}
Now, by a direct computation, it is easy to check that   
$$M_{n m}^{-1}(\Omega_m^{\pm}) = \frac{1}{\det(M_{n m} (\Omega_m^{\pm}))}  \begin{pmatrix}
	\Omega_m^{\pm} + \frac{B_{mn}}{\delta +1}	 & -  \frac{\gamma_{nm}}{\delta +1}   \vspace{0.1cm}\\
		  - \frac{ \delta \gamma_{nm}}{\delta +1}   & \Omega_m^{\pm} + \frac{A_{mn}}{\delta +1} 
	\end{pmatrix} ,\quad \text{for all} \quad n\geq 2  , $$
	where $\gamma_n$ is defined in \eqref{sigma:def} whereas $A_n$ and $B_n$ are introduced in \eqref{def AnBn}. 	
Note that the preceding matrix is well defined thanks to \eqref{det=/=0} which ensures that 	\begin{equation*}
	\begin{aligned}
	\det\big(M_{n m} (\Omega_m^{\pm})\big)  \neq 0 ,\quad \text{for all}  \quad n\geq 2 .
	\end{aligned}
	\end{equation*}
	We further introduce the matrix 
	$$P_{n,m} \bydef     M_{n m}^{-1}  (\Omega_m^{\pm})  -  M_{\infty}^{-1}  (\Omega_m^{\pm})   ,\quad \text{for all}  \quad n\geq 2 ,$$
	where 
	$$  M_{\infty}^{-1} (\Omega_m^{\pm}) \bydef    \lim_{n\rightarrow \infty}  M_{mn}^{-1}  (\Omega_m^{\pm}),$$
	and we claim that 
	\begin{equation}\label{rest:asymptotic}
	 |P_{n,m} | =O\left(\frac{1}{n}\right), \quad \text{for all} \quad n\geq 2.
\end{equation}	
	It is easy to see that the preceding claim follows as a consequence of   the following asymptotics:
	$$\left | \frac{1}{\det(M_{n m} (\Omega_m^{\pm})) } -  \frac{1}{\det(M_{\infty} (\Omega_m^{\pm})) } \right| =O\left(\frac{1}{n}\right) $$
	and 
	$$ \left |  \begin{pmatrix}
	  \frac{B_{mn}}{\delta +1}	 & -  \frac{\gamma_{nm}}{\delta +1}   \vspace{0.1cm}\\
		  - \frac{ \delta \gamma_{nm}}{\delta +1}   &   \frac{A_{mn}}{\delta +1} 
	\end{pmatrix} - \begin{pmatrix}
  \frac{B_{\infty}}{\delta +1}	 & 0 \vspace{0.1cm}\\
		 0 & \frac{A_{\infty}}{\delta +1} 
	\end{pmatrix} \right| =O\left(\frac{1}{n}\right)  ,$$
	as $n \to \infty$. These asymptotic identities  are a direct consequence of the decay properties of Bessel functions, see in particular \eqref{asymptotic:Bessel:1}. This justifies our claim \eqref{rest:asymptotic}. 
	
	Now, we are ready to conclude  the proof of our main claim \eqref{claim:g:Ca}. To that end, we first write that
	$$\begin{aligned}
	\begin{pmatrix}
		 \widetilde{\rho}_1\\
		\widetilde{\rho}_2
	\end{pmatrix} = \sum_{n\geq 2}  P_{n,m} (\Omega_m^{\pm})
\begin{pmatrix}
		g_{n m,1}\vspace{0.1cm}\\
		g_{n m,2}
	\end{pmatrix} \sin(n m \theta)  
	-  M_{\infty}^{-1}(\Omega_m^{\pm}) \sum_{n\geq 2}  
\begin{pmatrix}
		g_{n m,1}\vspace{0.1cm}\\
		g_{n m,2}
	\end{pmatrix} \sin(n m \theta) .
	\end{aligned}$$
	On the one hand, due to the assumption that $ (g_1,g_2) \in \mathcal{Y} _m^\alpha$, it is readily seen that 
	$$ M_{\infty}^{-1}(\Omega_m^{\pm}) \sum_{n\geq 2}  
\begin{pmatrix}
		g_{n m,1}\vspace{0.1cm}\\
		g_{n m,2}
	\end{pmatrix} \sin(n m \theta)  \in C^{ \alpha}(\mathbb{T})\times C^{ \alpha}(\mathbb{T}). $$
	On the other hand, since we have proved that 
	$$ P_{n,m} = O\left( \frac{1}{n}\right), \quad \text{for all} \quad n\geq 2,$$
	it then  follows that 
	$$ \theta\mapsto  \Theta(\theta) \bydef    \sum_{n\geq 2} P_{n,m}  \sin (nm\theta) \in  L^2(\mathbb{T}) \subset L^1(\mathbb{T}). $$ 
	Accordingly, we deduce that  
	$$\sum_{n\geq 2}  P_{n,m} (\Omega_m^{\pm})
\begin{pmatrix}
		g_{n m,1}\vspace{0.1cm}\\
		g_{n m,2}
	\end{pmatrix} \sin(n m \theta)   =  \Theta * \begin{pmatrix}
		g_{1}\vspace{0.1cm}\\
		g_{2}
	\end{pmatrix}    (\theta) \in C^{ \alpha}(\mathbb{T})\times C^{ \alpha}(\mathbb{T}), $$ 
	where the convolution above  is written in a vectorial format. This completes the proof of \eqref{claim:g:Ca}.
	
Finally, we are left with the proof of the transversality condition. To that end, we first  write, by differentiating \eqref{def MjbO2} with respect to $\Omega$, that
	\begin{equation*}
	\partial _\Omega d_r\mathcal{F}(\Omega,0)\begin{pmatrix}
		{\rho}_{1}\vspace{0.1cm}\\
		{\rho}_{2}
	\end{pmatrix}=-\sum_{n\geq 1}n \begin{pmatrix}
		c_{n,1}\vspace{0.1cm}\\
		c_{n,2}
	\end{pmatrix}\sin(n\theta),
	\end{equation*}
	for any $\Omega\in \mathbb{R}$.
Therefore, we obtain that
		\begin{equation*}
	\partial _\Omega d_r\mathcal{F}(\Omega_m^{\pm},0)
	\begin{pmatrix}
		  \Omega_m^\pm + \frac{B_m}{\delta + 1 }
		  \vspace{3mm}\\ 
		  -\frac{\delta}{\delta+1}  \gamma_m 	
		  \end{pmatrix}
		   \cos( m \theta)=- m
		   \begin{pmatrix}
		  \Omega_m^\pm + \frac{B_m}{\delta + 1 }
		  \vspace{3mm}\\ 
		  -\frac{\delta}{\delta+1}  \gamma_m 	
		  \end{pmatrix}\sin(m\theta). 
	\end{equation*}
	Hence, in view of \eqref{cond1},  it follows that 
	$$ \partial _\Omega d_r\mathcal{F}(\Omega_m^{\pm},0)\begin{pmatrix}
		  \Omega_m^\pm + \frac{B_m}{\delta + 1 }
		  \vspace{3mm}\\ 
		  -\frac{\delta}{\delta+1}  \gamma_m 	
		  \end{pmatrix} \cos( m \theta)\in {\rm R}\big(d_r\mathcal{F}(\Omega_m^{\pm},0)\big) $$
if and only if there exist  two real numbers $c_{m,1}$ and $c_{m,2}$ such that 
\begin{equation*} 
M_m(\Omega_m^\pm)\begin{pmatrix}
		c_{ m,1}\vspace{0.1cm}\\
		c_{ m,2}
	\end{pmatrix}=\begin{pmatrix}
		  \Omega_m^\pm + \frac{B_m}{\delta + 1 }
		  \vspace{3mm}\\ 
		  -\frac{\delta}{\delta+1}  \gamma_m 	
		  \end{pmatrix}.
\end{equation*}
Since $\Omega_m^\pm + \frac{B_m}{\delta + 1 } \neq 0 $, the latter identity is equivalent to finding two real numbers, still denoted by $c_{m,1}$ and $c_{m,2}$ for simplicity, such that 
\begin{equation}\label{transversality:pr}
M_m(\Omega_m^\pm)\begin{pmatrix}
		c_{ m,1}\vspace{0.1cm}\\
		c_{ m,2}
	\end{pmatrix}=\begin{pmatrix}
		  1\\ 
		  \sigma_m	
		  \end{pmatrix},
\end{equation}
where we set 
\begin{equation*}
	\sigma_m \bydef \frac{ -\frac{\delta}{\delta+1}  \gamma_m }{\Omega_m^\pm + \frac{B_m}{\delta + 1 }}
\end{equation*}
and we emphasize that  
\begin{equation*}
	\begin{pmatrix}
		  1\\ 
		  \sigma_m	
		  \end{pmatrix} \in \text{Ker} \big( M_m(\Omega_m^\pm) \big).
\end{equation*}
The validity of the   identity \eqref{transversality:pr}   is violated by means of basic linear algebra arguments, for $M_m(\Omega_m^\pm) $ is non-invertible and the vector $ (1,\sigma_m)$ is a non trivial zero of that matrix, with $\sigma_m \neq 0$. More precisely, this is a consequence of the following simple lemma
\begin{lem}\label{Lemma:ALG}
	Let $M$ be a real two-dimensional matrix with a non trivial kernel. Assume further that there is $s\in \mathbb{R}\setminus \{0\}$ such that 
	\begin{equation*}
		\begin{pmatrix}
		  1\\ 
		  s	
		  \end{pmatrix} \in \text{Ker} (M) \cap \text{R}(M).
	\end{equation*}
	Then, $M$ is trace-free, i.e., it holds that 
	\begin{equation*}
		\text{Tr} (M)=0.
	\end{equation*}
\end{lem}
Let us admit Lemma \ref{Lemma:ALG} for a moment and first continue  the proof of the transversality condition. We get back to the proof of that lemma thereafter.

According to Lemma \ref{Lemma:ALG}, in order to prove the non validity of \eqref{transversality:pr} for any real numbers $c_{m,1}, c_{m,2}$, it is enough for us to check that   $\text{Tr}(M_m(\Omega_m^\pm))\neq 0$.  To that end, we compute that 
		$$ \text{Tr}(M_m(\Omega_m^\pm)) = 2 \Omega_{m}^{\pm} + \frac{A_m+ B_m}{1+\delta}. $$
		Hence, in view  of the definition of $\Omega_m^{\pm}$, given in   \eqref{Omeg+-}, we find that 
		$$ \text{Tr}(M_m(\Omega_m^\pm)) = \pm (\Omega_m^+ - \Omega_m^-)  \neq 0,$$
		thereby deducing that \eqref{transversality:pr} cannot hold true  for any  real numbers $c_{m,1}$ and $c_{m,2}$. This justifies  the transversality condition.

Let us now prove Lemma \ref{Lemma:ALG}.  Generally speaking, a given two-dimensional matrix   $ M  $ is non-invertible if and only if 
$$M= \begin{pmatrix}
		a & \eta a \vspace{0.1cm}\\
		b & \eta b
	\end{pmatrix}, \quad \text{for some}\quad  a,b, \eta\in \mathbb{R} .$$
	Note that, without loss of generality,  we can assume that   $a$ and $b$ are both not trivial, otherwise we emphasize that there will be noting left to prove.	
	Therefore,  if $ (1,s)$ is in the kernel of $M$ and satisfies 
	$$M \begin{pmatrix}
		c_{ 1}\vspace{0.1cm}\\
		c_{  2}
	\end{pmatrix}=\begin{pmatrix}
		1\vspace{0.1cm}\\
		s	\end{pmatrix}, $$
		for some $s\in \mathbb{R}^*$ and some two real numbers $c_1$ and $c_2$, then one can easily check that  $M$ should have the  format 
		$$ M=  \begin{pmatrix}
		a &  - \frac{a^2}{b} \vspace{0.1cm}\\
		b &  -a 
	\end{pmatrix}.$$
		In particular, one deduces from all of this that $Tr(M)=0.$    This proves Lemma \ref{Lemma:ALG}  and concludes the proof of the proposition.  
\end{proof}

  \subsubsection*{Proof of Theorem \ref{Thm:2}}  The proof of Theorem 2.1 is now achieved as direct application of   Crandall--Rabinowitz's Theorem \ref{Crandall-Rabinowitz theorem}, whose hypothesis are fully fulfilled due to Proposition \ref{prop.last}, above.

 \section{Endnote}
In this paper, it is shown that the multy-layer quasi--geostrophic system \eqref{EQ} admits a unique global weak---Lagrangian---solution as soon as the initial vorticities are Lebesgue-integrable and bounded. In light of that, any couple of initial vortex-patches is transported by the flow of the system and the vortex structure remains unchanged as time goes forward. Moreover, it is also shown here that there are time-periodic patches, close to stationary states (discs),    characterized by their rotational movement around their centers of mass with the same angular velocity.

Although the first result---existence of global weak solutions of \eqref{EQ}---is shown to hold in the full range of parameters $\lambda>0$ and $\delta>0$, the elements of proof of the second result---existence of rotating solutions---does not cover the whole previous range   of parameters. Specifically, technically speaking,  it turns out that the spectral analysis of the contour dynamics can be affected by the choice of these parameters and this is shown to also depend on the choice of the radii $b_1$ and $b_2$  of the initial discs.  Note that this phenomenon is not observable     in the Euler equations by virtue of their invariance by dilatation; it is, however, a property that comes from the contribution of the kernel associated with the shallow-water equations.

Our spectral analysis of the linearized operator around a steady state (two discs) shows that its kernel has a bi-dimensional structure. Moreover, the sequence of the associated eigenvalues $(\Omega_m^\pm)_{m\in \mathbb{N}^*}$ for which this operator is not invertible is, in some sense, a combination of a semblable sequence of   eigenvalues from Euler and shallow-water equations. This is well observed in the case $\delta=1$, see \eqref{EV:delta=1}. Although we are able to show that the linearized operator is not invertible at any value of angular velocities $ \Omega_m^\pm $, for any given symmetry $m\in \mathbb{N}^*$, it turns out that there are cases (depending on the choice of parameters $\lambda$, $\delta$, $b_1$ and $b_2$) where spectral collisions surely occur and the Crandall-Rabinowitz theorem does not directly apply to construct time-periodic solutions in such situations. Here, it is a remarkable observation  that the non-invertibility of the linearized operator at $ \Omega_m^\pm $, for any given symmetry $m\in \mathbb{N}^*$,   stems from the coupling of Euler and shallow-water kernels. This description for small number of symmetries is satisfactory when compared to the doubly connected case of shallow-water equations \cite{R21}, which, as previously emphasized, shares several aspects of similarity in terms of the two-dimensional structure of the V-states. 
 
In conclusion, the present work opens the gate for more interesting questions to be discussed about \eqref{EQ}, such as properties of the branches of bifurcation, description of stationary  solutions, existence of doubly connected V-states and   quasi-time-periodic solutions.

\section*{Acknowledgement} 
The authors would like to thank Taoufik Hmidi for introducing the multy-layer quasi--geostrophic system \eqref{EQ} as well as for several helpful discussions and remarks.


The work of Zineb Hassainia has been supported by Tamkeen under the NYU Abu Dhabi Research Institute grant of the center SITE.


\bibliographystyle{plain} 
\bibliography{BIB}
\end{document}